\newtheorem{theorem}{Theorem}[section]
\newtheorem{proposition}[theorem]{Proposition}
\newtheorem{corollary}[theorem]{Corollary}
\newtheorem{remark}[theorem]{Remark}
\newtheorem{lemma}[theorem]{Lemma}
\newtheorem{definition}[theorem]{Definition}
\newtheorem{conjecture}[theorem]{Conjecture}
\numberwithin{equation}{section}
\begin{document}

\baselineskip=16pt

\title[On coherent systems]{On coherent systems of type
$(n,d,n+1)$ on Petri curves}

\author{U. N. Bhosle, L. Brambila-Paz and P. E. Newstead}

\address{TIFR, Homi Bhabha Road, Mumbai 400005, India}
\email{usha@math.tifr.res.in}

\address{CIMAT, Apdo. Postal 402, C.P. 36240. Guanajuato, Gto,
M\'exico}
\email{lebp@cimat.mx}

\address{Department of Mathematical Sciences, The University of Liverpool,
Peach Street, Liverpool L69 7ZL, UK}
\email{newstead@liv.ac.uk}

\keywords{coherent systems, stability, Brill-Noether, Petri curve}

\subjclass[2000]{14H60}

\thanks{The authors are members of the research
group VBAC (Vector Bundles on Algebraic Curves). The first two authors were
supported by EPSRC grant GR/T22988/01 for a
visit to the University of Liverpool. The
second author acknowledges the support of CONACYT grant 48263-F.
The third author thanks CIMAT, Guanajuato, M\'exico and California State
University Channel Islands, where a part of this paper was
completed, and acknowledges support from the Academia Mexicana de Ciencias, under its exchange agreement with the Royal Society of London.}

\date{\today}

\begin{abstract}

We study coherent systems of type $(n,d,n+1)$ on a Petri curve $X$ of genus $g\ge2$. We describe the geometry of the moduli space of such coherent systems for large values of the parameter $\alpha$. We determine the top critical value of $\alpha$ and show that the corresponding ``flip'' has positive codimension. We investigate also the non-emptiness of the moduli space for smaller values of $\alpha$, proving in many cases that the condition for non-emptiness is the same as for large $\alpha$. We give some detailed results for $g\le5$ and applications to higher rank Brill-Noether theory and the stability of kernels of evaluation maps, thus proving Butler's conjecture in some cases in which it was not previously known.

\end{abstract}

\maketitle

\section{Introduction}\label{intro}

Let $X$ be a smooth irreducible projective
curve.
 A coherent system of type $(n,d,k)$ on $X$ is a pair $(E,V)$
where $E$ is a vector bundle on $X$ of rank $n$ and degree $d$
 and $V$ is a linear subspace of
$H^0(E)$ with $\dim V = k$. A notion of stability for coherent systems, dependent on a real variable $\alpha$, can be defined and leads to the construction of moduli spaces $G(\alpha;n,d,k)$ for $\alpha$-stable coherent systems (see \cite{an}, \cite{lep}, \cite{rag}). There is a natural compactification $\widetilde{G}(\alpha;n,d,k)$ obtained by considering equivalence classes of $\alpha$-semistable coherent systems. For $k=0$, $G(\alpha;n,d,0)$ is independent of $\alpha$ and coincides with the moduli space $M(n,d)$ of stable bundles of rank $n$ and degree $d$ on $X$, while $\widetilde{G}(\alpha;n,d,0)$ coincides with the corresponding moduli space $\widetilde{M}(n,d)$ of S-equivalence classes of semistable bundles. If $k\ge1$, a necessary condition for non-emptiness of $G(\alpha;n,d,k)$ (resp. $\widetilde{G}(\alpha;n,d,k)$) is $\alpha>0$ (resp. $\alpha\ge0$). For $n=1$, all coherent systems are $\alpha$-stable for all $\alpha>0$ and $G(\alpha;1,d,k)$ coincides with the classical variety of linear systems $G^{k-1}_d$.

A systematic study of coherent systems on curves of genus $g\ge2$ defined over the complex numbers was begun in \cite{bomn} (see also \cite{bo}) and continued in \cite{bgmmn2} and \cite{bgmmn}. In particular, precise conditions for non-emptiness of $G(\alpha;n,d,k)$ are known when $k\le n$ \cite[Theorem 3.3]{bgmmn2}. For $k>n$, much less is known. There are general results due to E.~Ballico \cite{ball} and M.~Teixidor i Bigas \cite{te2}; Teixidor's results are much the stronger, but are certainly not best possible. Some more detailed results have been obtained in \cite{le,la}. It is known that the $\alpha$-stability condition stabilises for $\alpha>d(n-1)$; we denote the corresponding ``large $\alpha$'' moduli space $G(\alpha;n,d,k)$ by $G_L(n,d,k)$ (see section \ref{prelim} for more details).

Our object in this paper is to study the case $k=n+1$ when the curve $X$ is a {\em Petri curve}, in other words, for every line bundle ${\mathcal L}$ on $X$, the multiplication map
$$H^0({\mathcal L})\otimes H^0({\mathcal L}^*\otimes K)\rightarrow H^0(K)$$
is injective. In this case $G_L:=G_L(\alpha;n,d,n+1)$ is non-empty if and only if the {\em Brill-Noether number}
$$\beta:=\beta(n,d,n+1)=g-(n+1)(n-d+g)$$
is non-negative \cite[Theorem 5.11]{bomn}. When in addition $d\le g+n$, $G(\alpha):=G(\alpha;n,d,n+1)$ is independent of $\alpha>0$ and its structure has been determined \cite[Theorem 2]{le}. Our first main theorem (Theorem \ref{t1}) generalises these results and gives a significant improvement of the estimate $\alpha>d(n-1)$ for $G(\alpha)$ to coincide with $G_L$. The detailed statement, which includes additional information on the structure of $G_L$, is as follows (here $E'$ denotes the subsheaf image of the evaluation map $V\otimes{\mathcal O}\rightarrow E$; for the definitions of {\em generated} and {\em generically generated}, see section \ref{prelim}).

\noindent{\bf Theorem \ref{t1}.} \begin{em} Suppose that $X$ is a Petri curve of genus $g\ge2$ and
$\alpha>\max\{0,\alpha_l\}$, where
\begin{equation*}
\alpha_l:=d(n-1)-n\left(n-1 +g -\left[\frac{g}{n}\right]\right).
\end{equation*}
Then

\begin{enumerate}

\item $G(\alpha) \not= \emptyset$ if and only if $\beta\geq 0$;

\item $G( \alpha ) =G_L$;

\item $(E,V) \in G(\alpha)$ if and only if $(E,V)$ is
generically generated
and $H^0(E'^*)=0$;

\item if $\beta>0$, $G(\alpha)$ is smooth and irreducible
of dimension $\beta$; moreover the generic element
of $G(\alpha)$ is generated;

\item if $\beta=0$, $G(\alpha)$ is a finite set of cardinality

$$
g!\prod_{i=0}^n\frac{i!}{(g-d+n+i)!};
$$
moreover every element of $G(\alpha)$ is generated.

\end{enumerate}\end{em}

It follows in particular that, if $(E,V)\in G_L$, then the cokernel $E/E'$ of the evaluation map $V\otimes{\mathcal O}\to E$ is a torsion sheaf. In section \ref{strat}, we define a stratification of $G_L$ in terms of the length of $E/E'$. More precisely, for every integer $t\ge0$, we write

$$\Sigma_t=\{(E,V)\in G_L:E/E'\mbox{ has length }t\}\ \ \mbox{and}\ \ S_t= \bigcup _{i\geq t } \Sigma _i.$$
Then 

\noindent{\bf Theorem \ref{t2}.}\begin{em}
Suppose $\beta\ge0$ and that the subsets $S_t$ of
$G_L$ are defined as above. Then
\begin{enumerate}
\item
$S_t$ is closed in $G_L$ and is non-empty if and only if
$0\le t\le t_1:=\left[ \frac{\beta}{n+1}\right]$;
\item for $1\le t\le t_1$,
$S_t\subset\overline{S_{t-1}\setminus S_t}$;
\item for $1\le t\le t_1$, $\dim S_t=\beta-t$;
\item $S_t$ is irreducible for $t<\frac{\beta}{n+1}$;
\item if $\frac{\beta}{n+1}$ is an integer,
then all irreducible components of $S_{t_1}$
have the same dimension.
\end{enumerate}\end{em}

In section \ref{top}, we show that there exists $(E,V)\in G_L$ such that $(E,V)$ is not $\alpha_l$-stable, in other words $\alpha_l$ is an (actual) critical value in the sense of \cite[Definition 2.4]{bomn}. In view of Theorem \ref{t1}, $\alpha_l$ is in fact the top critical value of $\alpha$.

Sections \ref{anyalpha} -- \ref{low} are concerned with the moduli space $G(\alpha)$ for arbitrary $\alpha$. It was proved in \cite{le} that, if $G(\alpha)\ne\emptyset$, then $\beta\ge0$. Several results on the non-emptiness of $G(\alpha)$ when $\beta\ge0$ were also proved in \cite{le}. In section \ref{anyalpha}, we extend these results using the techniques of elementary transformations and extensions of coherent systems. In particular for $n=2,3,4$, we show in section \ref{n=23} that $G(\alpha)\ne\emptyset$ if and only if $\beta\ge0$ (see Theorems \ref{th2}, \ref{th3} and \ref{th4} for details). We then consider in section \ref{low} the case $g\le5$ (including $g=0$ and $g=1$, which have been excluded from our general discussion). For $g\le2$, the results are complete, while for $g=3,4,5$, there are a few cases still to be solved.

In section \ref{apli}, we give some applications to higher rank Brill-Noether theory (see section \ref{prelim} for definitions). We first obtain some irreducibility and smoothness results for Brill-Noether loci using the programme envisaged in \cite[section 11]{bomn}. For the second application, suppose that ${\mathcal L}$ is a generated line bundle of degree $d>0$ and let $V$ be a linear subspace of $H^0({\mathcal L})$ of dimension $n+1$ which generates ${\mathcal L}$ (in other words, $({\mathcal L},V)$ is a generated coherent system of type $(1,d,n+1)$). We have an evaluation sequence
$$0\longrightarrow M_{V,{\mathcal L}}\longrightarrow V\otimes{\mathcal O}\longrightarrow {\mathcal L}\longrightarrow0.$$
The bundles $M_{V,{\mathcal L}}$ arise in several contexts and have been used in the study of Picard bundles \cite{el}, normal generation of vector bundles \cite{pr,bu1}, syzygies and projective embeddings \cite{green1}, higher rank Brill-Noether loci \cite{mer1}, theta-divisors \cite{bea,mis} and coherent systems \cite{bu,bomn,le}. 

A particular point of interest is to determine whether or not $M_{V,{\mathcal L}}$ is stable. In fact, in \cite{bu}, Butler 
conjectured that $M_{V,{\mathcal L}}$ is stable for general choices of $X$, $\mathcal L$ and $V$. His conjecture \cite[Conjecture 2]{bu} is concerned more generally with generated coherent systems of any type $(n,d,k)$. We shall be concerned only with the case $n=1$; Butler's conjecture can then be stated as follows.

\noindent{\bf Conjecture \ref{conj}.} \begin{em} Let $X$ be a Petri curve of genus $g\ge3$. Suppose that 
$\beta:=\beta(1,d,n+1)\ge0$ and that $\mathcal L$ is a general element of $B(1,d,n+1)$ (when $\beta=0$, $\mathcal L$ 
can be any element of the finite set $B(1,d,n+1)$) and let $V$ be a general subspace of $H^0(\mathcal L)$ of dimension $n+1$. Then $M_{V,{\mathcal L}}$ is stable.\end{em}

In most of the above references, $V$ is taken to be $H^0({\mathcal L})$, which implies by Riemann-Roch that $d\le g+n$ and the stability problem has been solved in this case \cite{bu,le}. However the case where $V$ is a proper subspace of $H^0({\mathcal L})$ seems equally interesting; this is mentioned but not used in \cite{bu}, used in a minor way in \cite{bomn} and studied for low values of the codimension in \cite{mis}. However, the restriction placed on $d$ in \cite{mis} implies that $d\le2n$, so this case (although not the remaining results of \cite{mis}) is also covered in \cite{mer1,mer2}. In the present paper, we do not use the stability of $M_{V,{\mathcal L}}$ except through citations from earlier papers. We are therefore able to use our methods to prove the stability of $M_{V,{\mathcal L}}$ in some cases where it is not (to our knowledge) already known.  These new examples for which $M_{V,{\mathcal L}}$ is stable depend essentially on the use of extensions of coherent systems (more specifically on Propositions \ref{prop5}, \ref{prop6}, \ref{prop7}, \ref{prop32} and \ref{prop33}).

We assume throughout that $X$ is a Petri curve of genus $g$, where, except in section \ref{low}, $g\ge2$. We assume also that $X$ is defined over the complex numbers. We denote the canonical line bundle on $X$ by $K$. 

\section{Preliminaries}\label{prelim}

In this section, we recall some facts about coherent systems, most of which can be found in \cite{bomn} and \cite{he}.

For  $\alpha \in \mathbb{R}$, we define the
{\em $\alpha$-slope} of the coherent system $(E,V)$ of type $(n,d,k)$ by
$$\mu _{\alpha}(E,V):= \frac{d}{n} + \alpha \frac{k}{n}.$$
A {\em coherent subsystem} of $(E,V)$ is a pair $(F,W)$, where $F$ is a subbundle of $E$ and $W\subset V\cap H^0(F)$.

\begin{definition}\label{def1}\begin{em}
For any $\alpha \in \mathbb{R}$, a coherent system $(E,V)$ on $X$ is
$\alpha$-{\em stable}
(respectively $\alpha$-{\em semistable}) if, for every proper coherent subsystem $(F,W)$,
$$\mu _{\alpha}(F,W)<\mu _{\alpha}(E,V) \ \ \ ({\rm respectively} \leq ).$$
\end{em}\end{definition}
We denote by $G(\alpha ;n,d,k)$ the moduli space of $\alpha$-stable
coherent systems of type $(n,d,k)$ (\cite{an}, \cite{lep}, \cite{rag}) and by $\widetilde{G}(\alpha;n,d,k)$ the moduli space of S-equivalence classes of $\alpha$-semistable coherent systems (see \cite[section 2]{bomn}).
It follows from the definition of $\alpha $-stability that,
if $k\ge1$ and $G(\alpha;n,d,k)\not=\emptyset$, then $\alpha > 0$ and $d>0$ \cite[section 2 and Lemmas 4.1 and 4.3]{bomn}.

\begin{remark}\begin{em}\label{R8}
Given a coherent system $(E,V)$ and an effective line bundle ${\mathcal L}$,
let ${\widetilde E} = E\otimes {\mathcal L}$. Choose a non-zero section $s$ of
${\mathcal L}$ and let ${\widetilde V}$ be the image of $V$ in $H^0(\widetilde E)$
under the induced
inclusion $H^0(E) \hookrightarrow H^0(\widetilde E):v\mapsto v\otimes s$. Then
\begin{itemize}
\item[(1)]
 $E$ is (semi)stable if and only if ${\widetilde E}$ is (semi)stable.
\item[(2)] $(E,V)$ is $\alpha $-(semi)stable if and only if $({\widetilde E},
{\widetilde V})$ is $\alpha $-(semi)stable \cite[Lemma 1.5]{rag}.
\end{itemize}\end{em}\end{remark}

\begin{remark}\begin{em}\label{bint}
It follows from Remark \ref{R8} that, if
 $G(\alpha ;n,d,k)\not= \emptyset$ for all integers $d\in [a,b]$ with
 $a,b\in{\mathbb Z}$
and $b-a \geq n-1$, then $G(\alpha ;n,d,k)\not= \emptyset$ for all
$d\geq a.$
\end{em}\end{remark}

For any triple $(n,d,k)$, we define the {\em Brill-Noether number} $\beta(n,d,k)$ by
$$\beta(n,d,k)=n^2(g-1) +1-k(k-d+n(g-1)).$$
For a coherent system $(E,V)$, the {\em Petri map} at $(E,V)$ is the map
\begin{equation}\label{pet1}
V\otimes H^0(E^*\otimes K)\rightarrow
H^0(E\otimes E^*\otimes K)
\end{equation}
given by multiplication of sections. We have the following fundamental result (see \cite[Corollaire 3.14]{he}, \cite[Corollary 3.6 and Proposition 3.10]{bomn}).

\begin{proposition}\label{e}
Every irreducible component of $G(\alpha;n,d,k)$ has dimension $\ge\beta(n,d,k)$. Moreover, if
$(E,V)\in G(\alpha ;n,d,k)$, then $G(\alpha ;n,d,k)$ is smooth of dimension $\beta(n,d,k)$ at $(E,V)$ if and only if (\ref{pet1}) is
injective.
\end{proposition}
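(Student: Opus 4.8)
The natural approach is via the deformation theory of coherent systems, as developed in \cite{he} (see also \cite[Section 3]{bomn}). The plan is to attach to $(E,V)$ the hypercohomology groups $\mathbb{H}^i(E,V)$, $i=0,1,2$, of the two-term complex whose cohomology encodes the local structure of $G(\alpha;n,d,k)$: $\mathbb{H}^0$ is the space of infinitesimal automorphisms, $\mathbb{H}^1$ is the Zariski tangent space at $(E,V)$, and $\mathbb{H}^2$ is the obstruction space. The starting point is to record the associated long exact sequence
$$0 \to \mathbb{H}^0 \to H^0(\mathrm{End}\,E) \to \mathrm{Hom}(V, H^0(E)/V) \to \mathbb{H}^1 \to H^1(\mathrm{End}\,E) \xrightarrow{\ \rho\ } \mathrm{Hom}(V, H^1(E)) \to \mathbb{H}^2 \to 0,$$
which terminates on the right because $X$ is a curve, so that $H^2(\mathrm{End}\,E)=0$; here $\rho$ is induced by cup product with the deformation classes of $E$.

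For the dimension bound I would first invoke the standard deformation-theoretic principle that $G$ is, near $(E,V)$, cut out inside the smooth germ modelled on $\mathbb{H}^1$ by at most $\dim\mathbb{H}^2$ equations, so that the local dimension at $(E,V)$ is at least $\dim\mathbb{H}^1 - \dim\mathbb{H}^2$. Taking the alternating sum of dimensions in the long exact sequence and applying Riemann--Roch to $\mathrm{End}\,E$ (degree $0$, rank $n^2$) and to $E$ gives $\dim\mathbb{H}^1 - \dim\mathbb{H}^2 = \beta(n,d,k) + \dim\mathbb{H}^0 - 1$. Since $(E,V)$ is $\alpha$-stable it is simple, so $\mathbb{H}^0 = \mathrm{Hom}((E,V),(E,V)) = \mathbb{C}$; hence $\dim\mathbb{H}^1 - \dim\mathbb{H}^2 = \beta(n,d,k)$. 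As this holds at every point, every irreducible component of $G$ will have dimension $\geq \beta(n,d,k)$.

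For the smoothness criterion I would use that the Zariski tangent space is always $\mathbb{H}^1$, with $\dim\mathbb{H}^1 = \beta + \dim\mathbb{H}^2 \geq \beta$ and equality precisely when $\mathbb{H}^2 = 0$. Thus $G$ is smooth of dimension $\beta$ at $(E,V)$ if and only if $\dim\mathbb{H}^1 = \beta$, that is, if and only if $\mathbb{H}^2 = 0$. From the long exact sequence $\mathbb{H}^2 = \mathrm{coker}\,\rho$, so it remains to show that $\rho$ is surjective if and only if the Petri map (\ref{pet1}) is injective. This is handled by Serre duality: under the identifications $H^1(\mathrm{End}\,E)^* \cong H^0(E\otimes E^*\otimes K)$ and $\mathrm{Hom}(V,H^1(E))^* \cong V\otimes H^0(E^*\otimes K)$, the transpose $\rho^*$ is exactly the multiplication map (\ref{pet1}). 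Hence $\rho$ is surjective if and only if $\rho^*$ is injective, i.e.\ if and only if (\ref{pet1}) is injective, which completes the argument.

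The step I expect to be the main obstacle is the Serre-duality identification of $\rho^*$ with the Petri map: one must verify that the transpose of the cup-product map $\rho$ coincides, pairing by pairing, with multiplication of sections $V\otimes H^0(E^*\otimes K)\to H^0(E\otimes E^*\otimes K)$. The remaining ingredients---the construction of the deformation complex and its long exact sequence, and the local description of $G$ as the zero scheme of the obstruction map---are standard and can be cited from \cite{he} and \cite{bomn}, while the Riemann--Roch bookkeeping in the Euler-characteristic computation is routine.
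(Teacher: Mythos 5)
Your argument is correct and is precisely the standard deformation-theoretic proof underlying the results the paper cites here (\cite[Corollaire 3.14]{he}, \cite[Corollary 3.6 and Proposition 3.10]{bomn}): the paper gives no independent proof, and your long exact sequence, the Euler-characteristic computation yielding $\dim\mathbb{H}^1-\dim\mathbb{H}^2=\beta+\dim\mathbb{H}^0-1$ together with simplicity of $\alpha$-stable coherent systems, and the Serre-duality identification of $\rho^*$ with the Petri map all match those references. No gaps.
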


For a line bundle ${\mathcal L}$ with $V=H^0({\mathcal L})$, the Petri map (\ref{pet1}) takes the form
\begin{equation}\label{pet2}
H^0({\mathcal L})\otimes H^0({\mathcal L}^*\otimes K)\to H^0(K)
\end{equation}

\begin{definition}\label{def2}
\begin{em}The curve $X$ is a {\em Petri curve} if (\ref{pet2}) is injective for every line bundle $\mathcal L$ on $X$.
\end{em}\end{definition}

It is a classical fact (see \cite{arb}) that the general curve of any given genus $g$ is a Petri curve. It should however be emphasised that, except for certain low values of the genus, there exist $\alpha$-stable coherent systems $(E,V)$ on the general curve for which (\ref{pet1}) is not injective (see, for example, \cite[\S5]{te1}).

The $\alpha$-range is divided into a finite set
of intervals by a set of {\em critical values $\{\alpha_i\}$}, where, for $k\ge n$, $$
0=\alpha_0 < \alpha_1< \cdots <\alpha_L < \infty
$$
\cite[Proposition 4.6]{bomn}. For $ \alpha , \alpha' \in ( \alpha_i,
\alpha_{i+1})$, we have
$G( \alpha; n,d,k) = G( \alpha'; n,d,k)$ and we denote this moduli space by
$G_i: = G_i(n,d,k)$. In particular, for
$\alpha>\alpha_L$, we have the ``large $\alpha$'' moduli
space $G_L:=G_L(n,d,k)$.

The relation between two consecutive moduli spaces
$G_{i-1}$ and $G_i$ is given by the so called ``flips'' (see \cite{bomn} for a more complete description).
For any critical value $\alpha_i$, we denote
by $\alpha_i^-$, $\alpha_i^+$ values of $\alpha$
in the intervals respectively immediately before and
after $\alpha_i$ and let
$$
G_i^+:= \{ (E,V)\in G_i\,|\,(E,V)\ {\rm is\ not }
\  \alpha_i^-{\rm-stable}\}
$$
and
$$
G_i^- = \{ (E,V) \in G_{i-1}\,|\,(E,V)\
{\rm is\ not } \  \alpha_i^+{\rm-stable}\}.
$$
These are called {\em flip loci} and
\begin{equation}\label{eqn:iso}
G_i - G_i^+ = G_{i-1} - G_i^-.
\end{equation}

For any critical value $\alpha_i$, the flip
locus $G_i^+$ consists of the coherent systems
$(E,V)\in G_i$ for which there exists an exact sequence
\begin{equation}\label{eqif}
0 \to (E_1,V_1) \to (E,V) \to (E_2,V_2) \to 0,
\end{equation}
with $(E_j,V_j)$ of type $(n_j,d_j,k_j)$, $\alpha_i$-semistable
 and  $\alpha_i^+$-stable for $j=1,2$ and
\begin{equation}\label{eqn:alpha}
\mu_{\alpha_i} (E_1,V_1)
= \mu_{\alpha_i}(E_2,V_2), \ \ k_1/n_1 < k/n
\end{equation}
(see \cite[Lemma 6.5]{bomn}  for more details). Similarly, the flip
locus $G_i^-$ consists of the coherent systems
$(E,V)\in G_{i-1}$ for which there exists an exact sequence
\begin{equation*}
0 \to (E_2,V_2) \to (E,V) \to (E_1,V_1) \to 0,
\end{equation*}
with $(E_j,V_j)$ $\alpha_i$-semistable
 and  $\alpha_i^-$-stable for $j=1,2$ and satisfying (\ref{eqn:alpha}).

In \cite{bomn}, numerical criteria were obtained to help determine whether the flip loci have positive codimension. More generally, these criteria can be used to estimate the number of parameters on which the coherent systems $(E,V)$ given by extensions (\ref{eqif}) depend. Define, for $\{j,l\}=\{1,2\}$,
\begin{eqnarray}\label{c12}
C_{jl}&=&n_jn_l(g-1)-n_jd_l+n_ld_j+k_jd_l-k_jn_l(g-1)-k_jk_l\nonumber\\
&=&(k_j-n_j)(d_l-n_l(g-1))+n_ld_j-k_jk_l
\end{eqnarray}
and
\begin{equation}\label{h2}
{\mathbb H}^0_{jl}={\rm Hom}((E_j,V_j),(E_l,V_l)),\ \ {\mathbb H}^2_{jl}=H^0(E_l^*\otimes N_j\otimes K)^*,
\end{equation}
$N_j$ being the kernel of the evaluation map
$V_j\otimes\mathcal{O}\to E_j$. We have, by \cite[equations (8) and (11)]{bomn},
\begin{equation}\label{eqn:200}
\dim\mbox{Ext}^1((E_j,V_j),(E_l,V_l))=C_{jl}+\dim{\mathbb H}^0_{jl}+\dim{\mathbb H}^2_{jl}.
\end{equation}

The following lemma can be regarded as a simplified version of
\cite[Lemma 6.8]{bomn}.

\begin{lemma}\label{alpha+}
Suppose that, for $j=1,2$, $(E_j,V_j)$ has type $(n_j,d_j,k_j)$ and varies in a family depending on at most 
$\beta(n_j,d_j,k_j)$ parameters. Suppose further that, for some $h_0$, $h_2$,
$$\dim{\mathbb H}^0_{21}\le h_0,\ \ \dim{\mathbb H}^2_{21}\le h_2$$
for all $(E_j,V_j)$ occurring in these families and that
$$
C_{12} -h_0-h_2>0.
$$
Then the coherent systems $(E,V)$ arising as non-trivial extensions of the form (\ref{eqif}) depend on at most
$\beta(n,d,k)-1$ parameters.
\end{lemma}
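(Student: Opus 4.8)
The plan is to count the parameters on which the extensions $(E,V)$ of the form $(\ref{eqif})$ depend, and to show that this count is at most $\beta(n,d,k)-1$. The extensions in question are classified by the group $\mbox{Ext}^1((E_2,V_2),(E_1,V_1))$, so first I would assemble the dimension of the family by adding three contributions: the parameters for the subobject $(E_1,V_1)$, the parameters for the quotient $(E_2,V_2)$, and the parameters coming from the choice of extension class. By hypothesis the first two contribute at most $\beta(n_1,d_1,k_1)$ and $\beta(n_2,d_2,k_2)$ respectively. For the extension classes, a non-trivial extension and a non-zero scalar multiple of its class give rise to isomorphic coherent systems, so I would use $\dim\mbox{Ext}^1((E_2,V_2),(E_1,V_1))-1$ as the number of genuinely distinct non-trivial extensions. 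Thus the total count is at most
$$
\beta(n_1,d_1,k_1)+\beta(n_2,d_2,k_2)+\dim\mbox{Ext}^1((E_2,V_2),(E_1,V_1))-1.
$$

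Next I would substitute the formula $(\ref{eqn:200})$ for $\dim\mbox{Ext}^1((E_2,V_2),(E_1,V_1))$, namely $C_{21}+\dim{\mathbb H}^0_{21}+\dim{\mathbb H}^2_{21}$, and bound the last two terms using the hypotheses $\dim{\mathbb H}^0_{21}\le h_0$ and $\dim{\mathbb H}^2_{21}\le h_2$. This reduces the problem to a purely numerical comparison: I must show
$$
\beta(n_1,d_1,k_1)+\beta(n_2,d_2,k_2)+C_{21}+h_0+h_2-1\le\beta(n,d,k)-1,
$$
equivalently $\beta(n_1,d_1,k_1)+\beta(n_2,d_2,k_2)+C_{21}+h_0+h_2\le\beta(n,d,k)$. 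Since $n=n_1+n_2$, $d=d_1+d_2$ and $k=k_1+k_2$, I would expand $\beta(n,d,k)$ using its definition and separate the cross terms. The key identity to verify is that
$$
\beta(n,d,k)=\beta(n_1,d_1,k_1)+\beta(n_2,d_2,k_2)+C_{12}+C_{21},
$$
which follows directly from the quadratic form defining $\beta$ and the expression $(\ref{c12})$ for the $C_{jl}$. Granting this, the desired inequality becomes simply $C_{12}-h_0-h_2\ge0$, which is precisely the hypothesis $C_{12}-h_0-h_2>0$ (indeed strict, giving slack).

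The main obstacle I anticipate is bookkeeping rather than conceptual: verifying the cross-term identity $\beta(n,d,k)-\beta(n_1,d_1,k_1)-\beta(n_2,d_2,k_2)=C_{12}+C_{21}$ cleanly, and being careful about which index pair ($21$ versus $12$) governs the homomorphisms and which governs the surviving positive term. Because the extension $(\ref{eqif})$ has $(E_1,V_1)$ as sub and $(E_2,V_2)$ as quotient, the relevant $\mbox{Ext}^1$ is $\mbox{Ext}^1((E_2,V_2),(E_1,V_1))$, whose dimension involves $C_{21}$, $\mathbb{H}^0_{21}$ and $\mathbb{H}^2_{21}$; the term that then remains to control the count is $C_{12}$, matching the hypothesis. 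One point requiring a little care is the subtraction of $1$ for projectivising the extension class: this is legitimate precisely because we restrict to non-trivial extensions, so the zero class is excluded and scalar multiples are identified. With the numerical identity in hand, the positivity of $C_{12}-h_0-h_2$ yields the bound $\beta(n,d,k)-1$, completing the argument.
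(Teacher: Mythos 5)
Your argument is essentially the paper's own proof: the authors likewise count $\beta(n_1,d_1,k_1)+\beta(n_2,d_2,k_2)$ parameters for the sub and quotient plus $\dim\mbox{Ext}^1((E_2,V_2),(E_1,V_1))-1$ for the projectivised extension class, bound the latter by $C_{21}+h_0+h_2-1$ via (\ref{eqn:200}), and then invoke \cite[Corollary 3.7]{bomn}, which is exactly the cross-term identity you propose to verify. One correction: because of the $+1$ in the definition of $\beta$, that identity reads $\beta(n,d,k)=\beta(n_1,d_1,k_1)+\beta(n_2,d_2,k_2)+C_{12}+C_{21}-1$, not the version without the $-1$ that you state; consequently the final inequality reduces to $C_{12}-h_0-h_2\ge1$ rather than $\ge0$, which is still exactly the hypothesis $C_{12}-h_0-h_2>0$ for integers, so the conclusion stands but there is no ``slack''.
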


\begin{proof} By (\ref{eqn:200}), for fixed $(E_1,V_1)$, $(E_2,V_2)$, the coherent systems $(E,V)$ depend on at most
$$C_{21}+h_0+h_2-1$$ parameters. The result follows from \cite[Corollary 3.7]{bomn}.
\end{proof} 

\begin{remark}\begin{em}\label{rmk100}
Note that, if we assume in addition that $(E,V)$ is $\alpha$-stable for some $\alpha$, then we can take $h_0=0$, since a non-zero homomorphism $(E_2,V_2)\to(E_1,V_1)$ would contradict \cite[Proposition 2.2(ii)]{bomn}.
\end{em}\end{remark}

The ``small $\alpha$'' moduli spaces $G_0(n,d,k)$ and $\widetilde{G}_0(n,d,k)$ are closely related to the  {\em Brill-Noether locus} $B(n,d,k)$ of stable bundles, which
is defined by
$$B(n,d,k):=\{ E\in M(n,d)| h^0(E)\geq k\}.$$
Similarly one defines the Brill-Noether locus $\widetilde{B}(n,d,k)$
for semistable bundles by
$$\widetilde{B}(n,d,k):=\{ [E]\in \widetilde{M}(n,d)| h^0(\mbox{gr}(E))\geq k\},$$
where $\widetilde{M}(n,d)$ is the moduli space of S-equivalence classes of semistable bundles, $[E]$ is the S-equivalence class of $E$ and $\mbox{gr}(E)$ is the graded object associated to a semistable bundle $E$. The formula $(E,V)\mapsto[E]$ defines a morphism
$$\psi : G_0(n,d,k)\rightarrow \widetilde{B}(n,d,k),$$
whose image contains $B(n,d,k)$. We shall use this morphism $\psi$ in section \ref{apli}.

We finish this section with a useful definition and some notation.

\begin{definition}\label{def3}\begin{em}
A coherent system $(E,V)$ is 

{\em generated} if the evaluation map $V\otimes {\mathcal O}\to E$ is surjective;

{\em generically generated} if the cokernel of the evaluation map is a torsion sheaf.
\end{em}\end{definition}

\noindent{\bf Notation.} We shall write $\beta$, $G(\alpha)$, $\widetilde{G}(\alpha)$, $G_L$ for $\beta(n,d,n+1)$, $G(\alpha;n,d,n+1)$, $\widetilde{G}(\alpha;n,d,n+1)$, $G_L(n,d,n+1)$ respectively. For any coherent system $(E,V)$, we shall consistently denote by $E'$ the subsheaf image of the evaluation map. We shall also denote by $(n_i,d_i,k_i)$ the type of a coherent system $(E_i,V_i)$.

\section{The moduli space for large $\alpha$}\label{large}

In this section we assume that $X$ is a Petri curve and obtain a
strengthening of \cite[Theorem 5.11]{bomn}.
 In particular we obtain a much better lower bound
on the parameter $\alpha$ which ensures that $G(\alpha)=G_L$. In
later sections we shall prove that this bound is best possible and
describe a natural stratification of $G_L$. For  $d\le g+n$,
Theorem \ref{t1} has been proved in \cite[Theorem 2]{le}. We recall that, for any coherent system $(E,V)$,  $E'$ denotes
the subsheaf image of $V\otimes\mathcal{O}$ in $E$.

\begin{theorem}\label{t1} Suppose that $X$ is a Petri curve and
$\alpha>\max\{0,\alpha_l\}$, where
\begin{equation}\label{alpha0}
\alpha_l:=d(n-1)-n\left(n-1 +g -\left[\frac{g}{n}\right]\right).
\end{equation}
Then

\begin{enumerate}

\item $G(\alpha) \not= \emptyset$ if and only if $\beta\geq 0$;

\item $G( \alpha ) =G_L$;

\item $(E,V) \in G(\alpha)$ if and only if $(E,V)$ is
generically generated
and $H^0(E'^*)=0$;

\item if $\beta>0$, $G(\alpha)$ is smooth and irreducible
of dimension $\beta$; moreover the generic element
of $G(\alpha)$ is generated;

\item if $\beta=0$, $G(\alpha)$ is a finite set of cardinality

$$
g!\prod_{i=0}^n\frac{i!}{(g-d+n+i)!};
$$
moreover every element of $G(\alpha)$ is generated.

\end{enumerate}

\end{theorem}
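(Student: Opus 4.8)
```latex
The plan is to establish all five parts by studying the evaluation map of a coherent system $(E,V)$ in $G_L$ and exploiting the Petri property. The central object is the subsheaf image $E'$ of $V\otimes\mathcal{O}\to E$; since $\dim V=n+1$ while $E$ has rank $n$, the evaluation map is generically surjective precisely when $(E,V)$ is generically generated, so $E/E'$ is then a torsion sheaf. The first reduction I would make is to understand when large-$\alpha$ stability forces generic generation. For large $\alpha$, the $\alpha$-stability inequality $\mu_\alpha(F,W)<\mu_\alpha(E,V)$ is dominated by the term $\alpha k/n$, so a coherent subsystem $(F,W)$ with $\dim W/\operatorname{rk}F$ too large is forbidden; in particular if $(E,V)$ were not generically generated, the saturation $\bar{E'}$ of $E'$ would carry all $n+1$ sections in a bundle of rank $\le n-1$, producing a destabilising subsystem and contradicting $\alpha$-stability. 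This identifies the generically generated condition in part (3). The vanishing $H^0(E'^*)=0$ should likewise be read off from stability: a nonzero section of $E'^*$ gives a map $\mathcal{O}\to E'^*$, dually a quotient $E'\to\mathcal{O}$ or a sub-line-bundle structure, which again contradicts stability for $\alpha$ above the stated bound.

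Next I would analyse the dual evaluation sequence. If $(E,V)$ is generated, one has
\begin{equation*}
0\longrightarrow E^*\longrightarrow V^*\otimes\mathcal{O}\longrightarrow \mathcal{L}\longrightarrow 0
\end{equation*}
for some line bundle $\mathcal{L}$ of degree $d$, whence $(E,V)$ is dual to a generated coherent system $(\mathcal{L},V^*)$ of type $(1,d,n+1)$. This lets me transfer the whole problem to the classical Brill-Noether variety $G^{n}_d$ of linear series on $X$: the generated systems in $G_L$ correspond to $(\mathcal{L},W)$ with $\mathcal{L}$ a line bundle computing $g^{n}_d$ and $W$ a generating subspace. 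The Petri hypothesis is exactly what controls the smoothness and dimension of these classical loci. I would then invoke the classical Brill-Noether theory for Petri curves: $G^{n}_d\ne\emptyset$ iff the classical Brill-Noether number $\rho=g-(n+1)(g-d+n)$ is $\ge0$, it is smooth of dimension $\rho$ when $\rho>0$, and its cardinality when $\rho=0$ is the Castelnuovo number $g!\prod_{i=0}^{n} i!/(g-d+n+i)!$. Observing that $\beta=\beta(n,d,n+1)$ coincides with this $\rho$ under the identification above settles the nonemptiness criterion in (1), the dimension count and smoothness in (4), and the enumerative formula in (5).

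The equality $G(\alpha)=G_L$ in part (2) I would obtain by showing there are no critical values of $\alpha$ above $\alpha_l$: since every $(E,V)\in G_L$ is generically generated with $H^0(E'^*)=0$, one checks directly that the characterisation in (3) is stable under decreasing $\alpha$ down to $\alpha_l$, so no flip can occur. Concretely, I would verify that any exact sequence of the flip type \eqref{eqif} with the numerical constraint \eqref{eqn:alpha} forces the quotient or sub to violate either generic generation or the vanishing of $H^0(E'^*)$, using the critical-value bound $\alpha_l$ to make the slope estimate work; the precise arithmetic is where the specific form of $\alpha_l$, with its $[g/n]$ correction term, enters. For the irreducibility in (4) and the assertion that the generic element is generated, I would use the smoothness together with the fact that the generated locus is open and dense inside $G_L$ (the non-generated systems forming a proper subvariety identified with lower-dimensional strata $S_t$, $t\ge1$, anticipating Theorem \ref{t2}), and irreducibility would follow from the irreducibility of the corresponding classical Brill-Noether locus under the Petri condition.

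The main obstacle I anticipate is part (2), the sharp identification of $\alpha_l$ as the bound below which $G(\alpha)$ may differ from $G_L$: one must show not merely that large $\alpha$ works, but that the threshold \eqref{alpha0} is exactly right, which requires a delicate slope analysis of potential destabilising subsystems of a generically generated $(E,V)$ and a careful bookkeeping of degrees via $H^0(E'^*)=0$ together with the Petri-controlled value of $h^0$. Establishing that no destabilisation can occur above $\alpha_l$—and hence that the characterisation (3) is preserved throughout the range—seems the technical heart; the remaining parts then follow by the translation to classical Brill-Noether theory for Petri curves.
```
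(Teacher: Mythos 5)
Your outline correctly identifies the architecture of the proof: the characterisation of $G_L$ as the generically generated systems with $H^0(E'^*)=0$, the dual span correspondence with classical linear series $G^n_d$ on a Petri curve for existence and for the Castelnuovo count, and the observation that part (2) follows once the characterisation in (3) is shown to hold uniformly for all $\alpha>\max\{0,\alpha_l\}$. However, there is a genuine gap at exactly the point you flag as ``the technical heart'': the slope estimates that make $\alpha_l$ the correct threshold are not carried out, and they are not routine. The paper's argument requires two specific ingredients you do not supply. First, for any quotient coherent system $(E_2,V_2)$ of a generically generated system with $H^0(E'^*)=0$, one needs the lower bound $d_2\ge g+n_2-\left[\frac{g}{n_2+1}\right]$ together with $k_2\ge n_2+1$; this comes from applying classical Brill--Noether theory to $\det E_2'$ via the dual of the evaluation sequence, and it is what makes the destabilisation argument quantitative rather than merely ``for $\alpha$ large enough''. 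Second, one needs the monotonicity of $r\mapsto\frac1r\bigl(g-\left[\frac{g}{r+1}\right]\bigr)$ to reduce the worst case to $n_2=n-1$ (respectively rank $n-1$ in the converse direction); without this the comparison with $\alpha_l$, whose formula involves $\left[\frac{g}{n}\right]$, does not close. Your sketch of the converse direction (``$\alpha$-stable implies generically generated with $H^0(E'^*)=0$'') also glosses over the trivial summands: the paper decomposes $(E',V)\cong(\mathcal{O}^s,H^0(\mathcal{O}^s))\oplus(G,W)$ and runs the slope estimate on $(G,W)$ with $s\le n-r$, which handles the two conditions (generic generation and the vanishing of $H^0(E'^*)$) simultaneously; a bare appeal to the saturation of $E'$ does not control the degree of the destabilising subsystem.

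A second, smaller gap concerns part (4). Transferring to $G^n_d$ only describes the \emph{generated} locus of $G_L$, and your plan to conclude irreducibility and smoothness by citing the strata $S_t$, $t\ge1$, of Theorem \ref{t2} is circular: the proof of that theorem proceeds by induction using Theorem \ref{t1} itself. One must independently establish that $G_L$ is smooth and irreducible of dimension $\beta$ at \emph{every} point, including non-generated ones; the paper does this by quoting \cite[Lemma 4.2]{le} and \cite[Theorem 5.11]{bomn}, which reduce the injectivity of the coherent-system Petri map at $(E,V)$ to the classical Petri condition for the line bundle $\det E'$. Some such reduction is needed in your argument and is currently absent.
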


We shall prove Theorem \ref{t1} by means of a sequence
of propositions.  We begin with two lemmas, the first of which is a
variant of \cite[Lemma 3.1]{le}. Since the hypotheses are not exactly
the same as those of \cite[Lemma 3.1]{le}, we include a proof.

\begin{lemma}\label{l1} Let $X$ be a Petri curve
and $(E,V)$ a coherent system of type $(n,d,k)$.
If $(E,V)$ is generically generated
and $H^0(E'^*)=0$, then $k\ge n+1$ and $d\geq
g+n-\left[\frac{g}{n+1}\right]$. Moreover,
if $(E_2,V_2)$ is a quotient coherent system of
 $(E,V)$, then $(E_2,V_2)$
is generically generated and $H^0(E_2'^*)=0$.

\end{lemma}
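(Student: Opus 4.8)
The plan is to analyse the subsheaf $E'$ directly. Since $E'$ is the image of the evaluation map $V\otimes\mathcal O\to E$, it is a torsion-free sheaf on the smooth curve $X$, hence locally free; moreover it is globally generated (by the images of the elements of $V$) and, because $(E,V)$ is generically generated, it has rank $n$ and degree $d'\le d$, with $d-d'$ equal to the length of the torsion sheaf $E/E'$. I would first record this structure and then prove the three assertions in turn.

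First I would establish $k\ge n+1$. Writing $N$ for the kernel of the surjection $V\otimes\mathcal O\to E'$, we have a short exact sequence of vector bundles
$$0\to N\to V\otimes\mathcal O\to E'\to 0,$$
so $N$ has rank $k-n$ and degree $-d'$. Generic generation already forces $k\ge n$, since a bundle of rank $n$ cannot be generated by fewer than $n$ sections. If $k=n$, then $N$ has rank $0$; being a subsheaf of $V\otimes\mathcal O$ it is torsion-free, hence $N=0$ and $E'\cong\mathcal O^n$, which forces $H^0(E'^*)\cong H^0(\mathcal O^n)\ne 0$, contradicting the hypothesis. This gives $k\ge n+1$.

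For the degree bound I would exploit the hypothesis $H^0(E'^*)=0$ by dualising. As $E'$ is locally free the sequence above dualises to $0\to E'^*\to V^*\otimes\mathcal O\to N^*\to 0$, and taking cohomology together with $H^0(E'^*)=0$ yields $h^0(N^*)\ge k$. The bundle $N^*$ is globally generated (being a quotient of a trivial bundle) of rank $k-n$ and degree $d'$. Choosing $k-n-1$ general sections — each a nowhere-vanishing section of the globally generated bundle of rank $\ge2$ obtained at the previous stage — I would cut $N^*$ down to a globally generated line bundle quotient $M$ of degree $d'$ with $h^0(M)\ge h^0(N^*)-(k-n-1)\ge n+1$. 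The key point is then the Petri hypothesis applied to $M$: injectivity of $H^0(M)\otimes H^0(M^*\otimes K)\to H^0(K)$ gives $h^0(M)\,h^1(M)\le g$. Combining this with Riemann--Roch in the form $h^0(M)-h^1(M)=d'-g+1$ and the inequality $h^0(M)\ge n+1$, one obtains $g+n-d'\le h^1(M)$ and hence $(n+1)(g+n-d')\le g$. Since $g+n-d'$ is an integer this is equivalent to $d'\ge g+n-\left[\frac g{n+1}\right]$, and $d\ge d'$ finishes the bound. I expect this step — producing the line bundle $M$ with enough sections and feeding it into the Petri map — to be the main obstacle, the delicate part being the general-section reduction to a line bundle while keeping control of $h^0$.

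Finally, for the statement on a quotient coherent system $(E_2,V_2)$, I would chase the commutative square relating the two evaluation maps. Since $V\to V_2$ and $E\to E_2$ are surjective, the image of $V_2\otimes\mathcal O\to E_2$ coincides with the image of $E'$ under $E\to E_2$; as $E/E'$ is torsion and $E\to E_2$ is surjective, this image is all of $E_2$ up to torsion, so $(E_2,V_2)$ is generically generated and we obtain a surjection $E'\twoheadrightarrow E_2'$ of vector bundles. Dualising this surjection gives an inclusion $H^0(E_2'^*)\hookrightarrow H^0(E'^*)=0$, whence $H^0(E_2'^*)=0$, as required.
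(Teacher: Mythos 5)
Your proof is correct and follows essentially the same route as the paper's: both dualize the evaluation sequence of $E'$ to produce a line bundle of degree $\deg E'$ with at least $n+1$ sections and then invoke the Brill--Noether/Petri bound on a Petri curve, and the quotient-system argument is identical. The only cosmetic difference is that the paper first replaces $V$ by an $(n+1)$-dimensional generating subspace so that the kernel is already the line bundle $(\det E')^*$, whereas you keep all of $V$ and cut $N^*$ down to a line bundle (which is again $\det E'$) by successively quotienting out nowhere-vanishing sections.
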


\begin{proof} Certainly $k\ge n$. If $k=n$, then
$E'\cong\mathcal{O}^n$, contradicting the
hypothesis $H^0(E'^*)=0$. So $k\ge n+1$.

Replacing $V$, if necessary, by a subspace of
dimension $n+1$ which generates $E'$, we have an exact sequence

\begin{equation}\label{eqi2}
0\rightarrow {\mathcal L}^*\rightarrow V\otimes \mathcal{O}
\rightarrow E'\rightarrow 0,
\end{equation}
where ${\mathcal L}=\det E'$. From the dual of (\ref{eqi2})
and the hypothesis $H^0(E'^*)=0$, we see that
$h^0({\mathcal L})\ge n+1$. By classical Brill-Noether theory, this implies that

$$
\deg E'=\deg {\mathcal L}\ge \frac{ng}{n+1}+n=g+n-\frac{g}{n+1}.
$$
Hence $d\ge\deg E'\ge g+n-\left[\frac{g}{n+1}\right]$ as required.

For the last part, note that the image of $E'$ in
$E_2$ is precisely $E_2'$. Hence $E_2'$ is a
quotient of $E'$ and the result follows.
\end{proof}

\begin{remark}\label{r1}\begin{em} Note that
\begin{equation}\label{alphal}
\alpha_l=(n-1)(d-g-n)-\left(g-n\left[\frac{g}{n}\right]\right)=(n-1)(d-n)
-n\left(g-\left[\frac{g}{n}\right]\right).
\end{equation}
and that
$$
d\geq g+n-\left[\frac{g}{n+1}\right]\Leftrightarrow
d\ge\frac{ng}{n+1}+n \Leftrightarrow\beta\ge0.
$$
Note in particular that, by (\ref{alphal}),
$$\alpha_l\ge0\Rightarrow d\ge g+n\Rightarrow \beta\ge0.$$

\end{em}\end{remark}

\begin{lemma}\label{l2} Let $f:{\mathbb Z}_{>0}
\to{\mathbb Q}$ be defined by
$$
f(r):=\frac1r\left(g-\left[\frac{g}{r+1}\right]\right).
$$
Then $f$ is a decreasing function of $r$.
\end{lemma}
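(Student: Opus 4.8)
The plan is to show that the function
$$
f(r)=\frac1r\left(g-\left[\frac{g}{r+1}\right]\right)
$$
satisfies $f(r+1)<f(r)$ for every integer $r\ge1$. The cleanest route is to compare $f(r)$ and $f(r+1)$ directly by clearing denominators: the inequality $f(r+1)<f(r)$ is equivalent, after multiplying through by $r(r+1)>0$, to
$$
r\left(g-\left[\frac{g}{r+2}\right]\right)<(r+1)\left(g-\left[\frac{g}{r+1}\right]\right),
$$
which rearranges to
$$
g>(r+1)\left[\frac{g}{r+1}\right]-r\left[\frac{g}{r+2}\right].
$$
So the whole statement reduces to verifying this single inequality for all $r\ge1$.

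The key step is to handle the floor functions using the standard bounds
$$
\frac{g}{r+1}-\frac{r}{r+1}\le\left[\frac{g}{r+1}\right]\le\frac{g}{r+1},
\qquad
\left[\frac{g}{r+2}\right]\ge\frac{g}{r+2}-\frac{r+1}{r+2}.
$$
First I would bound the right-hand side above by replacing the two floors with these estimates (using the upper bound on the first floor and the lower bound on the second). This gives an explicit rational expression in $g$ and $r$, and I would then check that this expression is strictly less than $g$. The expected outcome is that the floor-free estimate already forces the inequality, because the two main terms $(r+1)\cdot\frac{g}{r+1}=g$ and $r\cdot\frac{g}{r+2}$ combine so that the genuine contribution is of the form $g-\frac{rg}{r+2}+(\text{small correction})=\frac{2g}{r+2}+(\text{small})$, and one checks this is comfortably less than $g$ for $g\ge2$ (indeed for all $g\ge1$).

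The main obstacle is that the crude real-variable bounds on the floors may not be tight enough at small values of $g$ and $r$, where the discreteness of the floor matters. In particular, edge cases such as $g<r+1$ (where $\left[\frac{g}{r+1}\right]=0$) or $g$ just above a multiple of $r+2$ need separate attention. To deal with this cleanly, I would instead write $g=q(r+1)+s$ with $0\le s\le r$, so that $\left[\frac{g}{r+1}\right]=q$ exactly, and similarly control $\left[\frac{g}{r+2}\right]$ in terms of $q$, $s$ and $r$; this converts the problem into an elementary inequality among integers that can be dispatched by a short case analysis on the size of $s$. This division-with-remainder approach removes all rounding ambiguity and is the safest way to cover the small cases.

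As a sanity check before committing to the argument, I would confirm the claimed monotonicity numerically for a few small pairs, e.g. comparing $f(1)=g-\left[\frac g2\right]$ with $f(2)=\tfrac12\bigl(g-\left[\tfrac g3\right]\bigr)$, to make sure no boundary value violates strict monotonicity; this also reassures us that the inequality is strict rather than merely non-strict, which is what the statement of Lemma \ref{l2} asserts.
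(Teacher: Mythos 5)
Your algebraic reduction is correct: $f(r+1)<f(r)$ is indeed equivalent to $g>(r+1)\left[\frac{g}{r+1}\right]-r\left[\frac{g}{r+2}\right]$. But the strict inequality you set out to prove is false, and the sanity check you propose at the end would have revealed this had it been run at $g=2$: whenever $g=r+1$ one has $\left[\frac{g}{r+1}\right]=1$ and $\left[\frac{g}{r+2}\right]=0$, so $f(r)=\frac1r(g-1)=1$ and $f(r+1)=\frac{1}{r+1}\,g=1$; concretely, $f(1)=f(2)=1$ for $g=2$. In the same vein, your floor-free estimate bounds the right-hand side by $\frac{2g+r(r+1)}{r+2}$, which is $<g$ only when $g>r+1$, not ``for all $g\ge1$'' as you claim, and no amount of division-with-remainder case analysis will rescue strictness at $g=r+1$. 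The lemma must be read (and is only ever used in the paper, e.g.\ in Propositions \ref{pe1} and \ref{pe3}) as asserting that $f$ is non-increasing, i.e.\ $f(r+1)\le f(r)$.

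The good news is that your reduction proves the non-strict statement essentially instantly: $(r+1)\left[\frac{g}{r+1}\right]\le g$ because $\left[\frac{g}{r+1}\right]\le\frac{g}{r+1}$, and $r\left[\frac{g}{r+2}\right]\ge0$, so the right-hand side of your displayed inequality is automatically $\le g$. This is arguably cleaner than the paper's own argument, which splits into the cases $g\ge r+1$ (where it sandwiches $f(r+1)\le\frac{g+1}{r+2}\le\frac{g}{r+1}\le f(r)$ using the bounds $\left[\frac{g}{r+1}\right]\le\frac{g}{r+1}$ and $\left[\frac{g}{r+2}\right]\ge\frac{g-r-1}{r+2}$) and $g<r+1$ (where both floors vanish). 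So the fix is simply to drop the claim of strictness and note that your inequality holds with $\ge$ for trivial reasons; the case analysis you plan is then unnecessary.
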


\begin{proof} If $g\ge r+1$, we have
$$
f(r)\ge\frac1r\left(g-\frac{g}{r+1}\right)=\frac{g}{r+1}
$$
and
$$
f(r+1)\le\frac1{r+1}\left(g-\frac{g-r-1}{r+2}\right)
=\frac{g+1}{r+2}\le\frac{g}{r+1}.
$$
On the other hand, if $g<r+1$, then
$$
f(r)=\frac{g}{r}>\frac{g}{r+1}=f(r+1).
$$
\end{proof}

\begin{proposition}\label{pe1} Suppose that $(E,V)$ is
 a generically generated coherent system of type
$(n,d,n+1)$ and $H^0(E'^*)=0$.  Then  $(E,V)$ is $\alpha $-stable for
$\alpha >\max\{0,\alpha_l\}$.

\end{proposition}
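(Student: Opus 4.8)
The plan is to check $\alpha$-stability directly against coherent subsystems, isolating the one genuinely substantive case and reducing it, via the dual quotient, to the numerical bounds furnished by Lemma \ref{l1}. Recall $\mu_\alpha(E,V)=\frac{d}{n}+\alpha\frac{n+1}{n}$. Given a proper coherent subsystem $(F,W)$ of type $(n_1,d_1,k_1)$, I want $\mu_\alpha(F,W)<\mu_\alpha(E,V)$; since $\mu_\alpha(F,W)$ is increasing in $\dim W$ for $\alpha>0$, I may assume $W=V\cap H^0(F)$. If $n_1=n$ then $F=E$, so $d_1=d$ and $k_1\le n<n+1$, and the inequality holds for every $\alpha>0$; the case $n_1=0$ is vacuous. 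This leaves the substantive range $1\le n_1\le n-1$.

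For such $(F,W)$, let $(E_2,V_2)$ be the quotient coherent system, of type $(n_2,d_2,k_2)$ with $n_2=n-n_1$, $d_2=d-d_1$ and $k_1+k_2=n+1$. From $n\mu_\alpha(E,V)=n_1\mu_\alpha(F,W)+n_2\mu_\alpha(E_2,V_2)$ and $n_2>0$, the inequality $\mu_\alpha(F,W)<\mu_\alpha(E,V)$ is equivalent to $\mu_\alpha(E_2,V_2)>\mu_\alpha(E,V)$. By the last part of Lemma \ref{l1}, $(E_2,V_2)$ is again generically generated with $H^0(E_2'^*)=0$; applying the first part of Lemma \ref{l1} to it gives $k_2\ge n_2+1$ and $d_2\ge g+n_2-\left[\frac{g}{n_2+1}\right]$, that is $\frac{d_2}{n_2}\ge 1+f(n_2)$ with $f$ as in Lemma \ref{l2}. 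As $n_2<n$, these force $\frac{k_2}{n_2}\ge 1+\frac{1}{n_2}>\frac{n+1}{n}$, so $\mu_\alpha(E_2,V_2)\ge 1+f(n_2)+\alpha\frac{n_2+1}{n_2}$. Substituting this lower bound, the target inequality reduces to $\alpha>R(n_2)$, where $R(m):=\frac{m}{n-m}\bigl((d-n)-nf(m)\bigr)$.

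It then remains to show $R(n_2)\le\max\{0,\alpha_l\}$ for every $1\le n_2\le n-1$; this is the heart of the matter. A short computation with $f(n-1)=\frac{1}{n-1}\bigl(g-\left[\frac{g}{n}\right]\bigr)$ and (\ref{alphal}) gives $R(n-1)=\alpha_l$. I then split on the sign of $(d-n)-nf(n_2)$. If it is $\le 0$, then $R(n_2)\le 0$ (the prefactor $\frac{n_2}{n-n_2}$ being positive), and $\alpha>0$ already gives $\alpha>R(n_2)$. If it is $>0$, then since $f$ is decreasing (Lemma \ref{l2}) we have $0<(d-n)-nf(n_2)\le(d-n)-nf(n-1)$, while $\frac{n_2}{n-n_2}\le n-1$ for $n_2\le n-1$; multiplying these two factorwise nonnegative inequalities yields $R(n_2)\le(n-1)\bigl((d-n)-nf(n-1)\bigr)=\alpha_l$. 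In both cases $R(n_2)\le\max\{0,\alpha_l\}<\alpha$, which finishes the argument.

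The step I expect to be the main obstacle is this last one, the control of $R(n_2)$. The tempting statement $R(n_2)\le\alpha_l$ for all $n_2$ is simply false: when $\alpha_l<0$ one finds $R(n_2)>\alpha_l$ for small $n_2$, which is precisely why the sharp bound involves $\max\{0,\alpha_l\}$ and why one is forced to distinguish the two signs of $(d-n)-nf(n_2)$. The decreasing property of $f$ from Lemma \ref{l2} is exactly the input that makes the positive-sign case collapse to the boundary value $n_2=n-1$, where $R$ attains $\alpha_l$.
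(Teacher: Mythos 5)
Your argument is correct and follows essentially the same route as the paper's proof: pass to the quotient coherent system, apply Lemma \ref{l1} to it to get $k_2\ge n_2+1$ and the lower bound on $d_2/n_2$, and use the monotonicity of $f$ from Lemma \ref{l2} to reduce everything to the extremal case $n_2=n-1$, where the threshold equals $\alpha_l$. The only difference is organizational: you isolate the explicit threshold function $R(n_2)$ and verify $R(n_2)\le\max\{0,\alpha_l\}$ by a sign split, whereas the paper chains the same inequalities directly; the inputs and the content are identical.
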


\begin{proof} Let $(E_2,V_2)$ be a proper quotient
coherent system of $(E,V)$ of type $(n_2,d_2,k_2)$.
It follows from Lemma \ref{l1} that  $k_2\ge n_2+1$
and $d_2\geq g+n_2-\left[\frac{g}{n_2+1}\right]$. Hence

\begin{equation}\label{eq1}
\mu_{\alpha}(E_2,V_2)\geq1+\frac1{n_2}
\left(g-\left[ \frac{g}{n_2+1}\right]\right)
 +\alpha \left(\frac{n_2+1}{n_2}\right).
\end{equation}

If $\alpha >\max\{0,\alpha_l\}$ then, since $0<n_2<n$,

\begin{equation}\label{ineq}
\alpha \left(\frac{1}{n_2}-\frac{1}{n}\right)=
\alpha \left(\frac{n-n_2}{nn_2}\right)
\geq \frac{\alpha}{n(n-1)}> \frac{d}{n}
 -1-\frac1{n-1}\left(g-\left[\frac{g}{n}\right]\right).
\end{equation}
Hence, from (\ref{eq1}) and Lemma \ref{l2},
$$
\mu_{\alpha}(E_2,V_2)-\mu_{\alpha}(E,V)>
\frac1{n_2}\left(g-\left[\frac{g}{n_2+1}
\right]\right)-\frac1{n-1}\left(g-\left[\frac{g}{n}
\right]\right)\ge0.
$$
Since this holds for all $(E_2,V_2)$, it follows that
$(E,V)$ is $\alpha$-stable.
\end{proof}

\begin{remark}\label{rmk:200}\begin{em}
Suppose $(E_2,V_2)$ is a coherent system of type $(n_2,d_2,k_2)$ with
$$0<n_2<n,\ \ k_2\ge n_2+1,\ \ d_2\ge g+n_2-\left[\frac{g}{n_2+1}\right].$$
If $\alpha\ge\alpha_l>0$, then (\ref{eq1}) still holds as does the first inequality in (\ref{ineq}), while the second inequality in (\ref{ineq}) becomes $\ge$. So
$$\mu_\alpha(E_2,V_2)\ge\mu_\alpha(E,V)$$
with equality if and only if $\alpha=\alpha_l$ and

$$n_2=n-1,\ \ k_2=n,\ \ d_2=g+n-1-\left[\frac{g}n\right].$$
\end{em}\end{remark}

\begin{proposition}\label{pe2} For given $n$ and $d$,
 the following three conditions are equivalent:

\begin{itemize}

\item[{\rm(a)}] there exists a generated coherent
system $(E,V)$ of type $(n,d,n+1)$ with $H^0(E^*)=0$;

\item[{\rm(b)}] there exists a generically generated
 coherent system $(E,V)$ of type $(n,d,n+1)$ with $H^0(E'^*)=0$;

\item[{\rm(c)}] $\beta  \geq 0$.

\end{itemize}

\end{proposition}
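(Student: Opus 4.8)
The plan is to prove the equivalence of (a), (b), (c) by establishing the chain of implications $\mathrm{(a)}\Rightarrow\mathrm{(b)}\Rightarrow\mathrm{(c)}\Rightarrow\mathrm{(a)}$. The first implication is essentially trivial: a generated coherent system is in particular generically generated (the cokernel is zero, hence torsion), and when $(E,V)$ is generated we have $E'=E$, so the condition $H^0(E^*)=0$ is literally the condition $H^0(E'^*)=0$. Thus (a) immediately yields (b).

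For $\mathrm{(b)}\Rightarrow\mathrm{(c)}$, I would invoke Lemma \ref{l1}: a generically generated coherent system of type $(n,d,n+1)$ with $H^0(E'^*)=0$ satisfies $d\ge g+n-\left[\frac{g}{n+1}\right]$. By the chain of equivalences recorded in Remark \ref{r1}, namely
\[
d\ge g+n-\left[\frac{g}{n+1}\right]\Leftrightarrow d\ge\frac{ng}{n+1}+n\Leftrightarrow\beta\ge0,
\]
this numerical bound is exactly $\beta\ge0$, giving (c).

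The substantive implication, and the one I expect to be the main obstacle, is $\mathrm{(c)}\Rightarrow\mathrm{(a)}$: assuming $\beta\ge0$, I must \emph{construct} a generated coherent system $(E,V)$ of type $(n,d,n+1)$ with $H^0(E^*)=0$. The natural strategy is to build $E$ as an extension or a direct construction starting from a generated line bundle. Specifically, since $\beta\ge0$ forces $d\ge\frac{ng}{n+1}+n$, one can choose a generated line bundle $\mathcal{L}$ of appropriate degree together with a suitable $(n+1)$-dimensional generating subspace $V\subset H^0(\mathcal{L})$, and then realize $E$ as a generated bundle sitting in a sequence
\[
0\longrightarrow M\longrightarrow V\otimes\mathcal{O}\longrightarrow E\longrightarrow 0
\]
of the type appearing in the evaluation sequences discussed in the introduction, with $M$ of rank $1$ and $M^*=\det E$. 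The condition $H^0(E^*)=0$ should then follow by dualizing this sequence and using $H^0(\mathcal{O}^{n+1})\to H^0(M^*)$ together with the Petri condition to control the cohomology. The delicate point is to verify simultaneously that such an $E$ has the correct rank $n$ (not merely rank $1$), is generated by an $(n+1)$-dimensional space of sections, and satisfies $H^0(E^*)=0$; this requires a genericity argument showing that a general choice of the data produces a vector bundle rather than a sheaf with torsion, and that the dual has no sections.

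The cleanest way to handle the construction is likely to reduce to the boundary case by Remark \ref{bint}: it suffices to produce examples for a single interval of degrees of length $n-1$, or equivalently for the smallest admissible $d$, and then increase the degree by tensoring with effective line bundles (Remark \ref{R8}) or by elementary modifications that preserve generation and the vanishing $H^0(E^*)=0$. I would therefore first settle the extremal case $d=g+n-\left[\frac{g}{n+1}\right]$ (equivalently $\beta=0$ or the smallest $\beta\ge0$) by an explicit dual-span construction from classical Brill--Noether theory—using that on a Petri curve a general line bundle of the relevant degree carries the needed $h^0$—and then propagate to all larger $d$ using the stability-preserving operations recorded in the Preliminaries. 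The hard part throughout is controlling $H^0(E^*)=0$ under these operations, since that vanishing is exactly what makes $(E,V)$ land in $G_L$ via Proposition \ref{pe1}.
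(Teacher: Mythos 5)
Your implications (a)$\Rightarrow$(b)$\Rightarrow$(c) and the core of your (c)$\Rightarrow$(a) are exactly the paper's argument: for (c)$\Rightarrow$(a) the paper takes a general (or, when $\beta=0$, any) generated $({\mathcal L},W)\in G(1,d,n+1)$ -- which exists by classical Brill--Noether theory on a Petri curve precisely because $\beta(1,d,n+1)=\beta(n,d,n+1)=\beta$ -- and defines $E$ by $0\to E^*\to W\otimes{\mathcal O}\to{\mathcal L}\to0$, so that $(E,W^*)$ is generated of type $(n,d,n+1)$ with $H^0(E^*)=\ker\bigl(W\to H^0({\mathcal L})\bigr)=0$. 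However, two points in your plan need correction. The ``delicate points'' you list are not delicate: the kernel of a surjection of vector bundles on a smooth curve is automatically locally free of the expected rank, the dualized sequence is automatically exact, and the vanishing $H^0(E^*)=0$ is immediate from $W\subset H^0({\mathcal L})$; no genericity argument and no appeal to the Petri condition (beyond the existence of a generated $g^n_d$) are needed.

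More seriously, your proposed fallback -- settle only the extremal degree and then propagate upward using Remark \ref{R8} or elementary modifications -- would not establish (a). Tensoring by a nontrivial effective line bundle destroys generation (every section $v\otimes s$ vanishes at the zeros of $s$), and an elementary transformation $0\to F\to E\to\tau\to0$ with $V\subset H^0(F)$ has evaluation image contained in $F$, so again $(E,V)$ is only generically generated. Both operations preserve condition (b) and the vanishing $H^0(E'^*)=0$ -- the part you single out as ``the hard part'' is in fact the part that survives -- but neither produces the generated systems required by (a), so the loop (c)$\Rightarrow$(a) would not close for non-extremal $d$. Fortunately the reduction is unnecessary: the dual-span construction applies verbatim for every $d$ with $\beta\ge0$, which is how the paper proceeds.
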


\begin{proof} Clearly (a) implies (b) and, by Lemma
\ref{l1} and Remark \ref{r1}, (b) implies (c).

Now suppose (c) holds. By classical Brill-Noether
theory, $G(1,d,n+1)\ne\emptyset$ and its general
element $({\mathcal L},W)$ is generated (in the case $\beta=0$, $G(1,d,n+1)$ is finite and all elements 
are generated). If we define $E$ by the exact sequence
$$
0\rightarrow E^*\rightarrow W\otimes \mathcal{O}
\rightarrow {\mathcal L}\rightarrow 0,
$$
then $(E,W^*)$ satisfies (a).
\end{proof}

\begin{proposition}\label{pe3} Suppose that
$\alpha >\max\{0,\alpha_l\}$ and
$(E,V)$ is an $\alpha$-semistable coherent
system of type $(n,d,n+1)$.
Then $(E,V)$ is generically generated and
$H^0(E'^*)=0$.

\end{proposition}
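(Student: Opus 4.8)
\textbf{Proof proposal for Proposition \ref{pe3}.}

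The plan is to prove the contrapositive in two separate pieces, establishing that an $\alpha$-semistable $(E,V)$ with $\alpha > \max\{0,\alpha_l\}$ must be both generically generated and satisfy $H^0(E'^*)=0$. For the \emph{generic generation}, suppose $E'$ has positive-rank cokernel, i.e. $\operatorname{rk}E' = m < n$. Then $(E',V)$ is a coherent subsystem of type $(m, d', n+1)$ with $d' = \deg E' \le d$. Since $E'$ is generated by $V$ and $\dim V = n+1 \ge m+1$, I would estimate its $\alpha$-slope from below and compare with $\mu_\alpha(E,V)$; the point is that $(E',V)$ carries \emph{all} $n+1$ sections while having rank strictly less than $n$, so its $k/n$ ratio is strictly larger, and for large $\alpha$ the term $\alpha(n+1)/m$ overwhelms $\mu_\alpha(E,V)=d/n+\alpha(n+1)/n$. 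This contradicts $\alpha$-semistability, forcing $m=n$ and hence generic generation.

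For the condition $H^0(E'^*)=0$, I assume a nonzero section $\sigma \in H^0(E'^*)$, i.e. a nonzero map $E' \to \mathcal{O}$, equivalently a nonzero map $E \to \mathcal{O}$ restricted to the image sheaf. Dualizing, this should produce a line subbundle $L \hookrightarrow E$ (the saturation of the image of $\mathcal{O} \to E'^{**} \hookrightarrow E$, or a line subbundle arising from the kernel of $E' \to \mathcal{O}$) with degree constraints that, combined with the sections in $V$, yield a destabilizing subsystem. Concretely, a nonzero $E' \to \mathcal{O}$ has image an ideal sheaf, giving a sub-line-bundle of $E$ of degree $\ge 0$ with $h^0 \ge 1$, hence a coherent subsystem $(F,W)$ of type $(1, d_F, k_F)$ with $k_F \ge 1$ and $d_F \ge 0$; its $\alpha$-slope $d_F + \alpha k_F$ again exceeds $\mu_\alpha(E,V)$ once $\alpha$ is large, since the subsystem has rank $1$. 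The delicate point is getting the \emph{right} degree bounds so that the comparison is valid precisely in the range $\alpha > \alpha_l$ and not merely for $\alpha \gg 0$.

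The main obstacle I anticipate is making the numerical comparisons sharp enough to work down to the explicit threshold $\alpha_l$ defined in (\ref{alpha0}), rather than just for asymptotically large $\alpha$. Both failure modes (positive-rank cokernel, or a section of $E'^*$) readily contradict semistability when $\alpha$ is enormous, but the content of the proposition is that $\max\{0,\alpha_l\}$ already suffices. I expect to need the bound $d_2 \ge g + n_2 - [g/(n_2+1)]$ from Lemma \ref{l1} applied to sub- or quotient systems, together with the monotonicity in Lemma \ref{l2}, exactly as in the proof of Proposition \ref{pe1}; indeed this proposition is the converse direction, so I would reuse the same inequality chain (\ref{eq1})--(\ref{ineq}) run in reverse. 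The cleanest route is probably: first show generic generation by the rank argument, then observe that if $H^0(E'^*)\ne 0$ one can extract a subsystem of type $(n_2,d_2,n_2)$ with $E'_2 \cong \mathcal{O}^{n_2}$ (so $d_2$ too small relative to the Lemma \ref{l1} bound), directly violating the slope inequality forced by $\alpha > \alpha_l$.
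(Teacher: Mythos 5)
There is a genuine gap, and it lies in the half of the argument you flag as needing a line subbundle with a section or a subsystem of type $(n_2,d_2,n_2)$. For large $\alpha$, the subsystems capable of violating $\alpha$-semistability of a system of type $(n,d,n+1)$ are those whose section-to-rank ratio exceeds $\frac{n+1}{n}$. A subsystem $(F,W)$ of type $(1,d_F,1)$ has $\mu_\alpha(F,W)=d_F+\alpha$, whose $\alpha$-coefficient is $1<\frac{n+1}{n}$, so it contradicts semistability only when $\alpha\le nd_F-d$, i.e.\ for \emph{small} $\alpha$, never for $\alpha$ large; the same applies to any subsystem with $k_2=n_2$. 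So neither of your proposed witnesses in the $H^0(E'^*)\ne0$ case can work in the stated range --- the inequality you want runs the wrong way. There is also a circularity in your ordering: to push the rank argument of your first case down to the threshold $\alpha_l$ you need the degree bound $\deg E'\ge g+m-\left[\frac{g}{m+1}\right]$, which comes from Lemma \ref{l1} applied to $(E',V)$, and that lemma requires $H^0(E'^*)=0$ --- precisely the statement you postpone to the second case. Without it $E'$ may contain trivial summands, and the crude bound $\deg E'\ge0$ only rules out asymptotically large $\alpha$, as you yourself anticipate.

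The paper repairs both defects with a single move: since $(E',V)$ is generated, write $(E',V)\cong(\mathcal{O}^s,H^0(\mathcal{O}^s))\oplus(G,W)$ with $(G,W)$ generated and $H^0(G^*)=0$, and set $r=\mathrm{rk}\,G$. Both failure modes (positive-rank cokernel, or $s\ge1$) are equivalent to the single inequality $r\le n-1$. The destabilizing subsystem is then $(G,W)$ --- not $(E',V)$ and not the trivial summand: it satisfies the hypotheses of Lemma \ref{l1}, so $\deg G\ge g+r-\left[\frac{g}{r+1}\right]$, and it carries $\dim W\ge n+1-s$ sections, so its ratio $\frac{n+1-s}{r}\ge\frac{r+1}{r}>\frac{n+1}{n}$. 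Minimizing the resulting lower bound for $\mu_\alpha(G,W)$ over $s\le n-r$ and then over $r$ via Lemma \ref{l2} reduces everything to the extremal case $r=n-1$, $s=1$, where the slope comparison reproduces exactly the definition (\ref{alpha0}) of $\alpha_l$ and yields $\alpha\le\alpha_l$, a contradiction. If you wish to keep your two-case structure, you must in both cases first pass from $E'$ to its nontrivial summand $G$ before comparing slopes.
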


\begin{proof}Since $(E',V)$ is a generated
coherent system, we can write
$(E',V)\cong (\mathcal{O}^s,H^0(\mathcal{O}^s))\oplus (G,W)$ where $H^0(G^*)=0$, $W=H^0(G)\cap V$ and $(G,W)$ is generated.
Let $r$ denote the rank of $G$.  Note that,
since $h^0(E')\geq n+1$, we must have $r\ge1$.
 We require to show that $r=n$.

Suppose to the contrary that $r\le n-1$. Since the coherent
system $(G,W )$ is generated, we have,
by Lemma \ref{l1}, $\deg G\geq g+r-\left[\frac{g}{r+1}\right]$.
Hence
$$
\frac1r\left(g-\left[\frac{g}{r+1}\right]\right) +1
+\alpha \frac{n+1-s}{r}\leq \mu_{\alpha}(G,W).
$$
Since $(E,V)$ is $\alpha$-semistable, it follows that
$$
\frac1r\left(g-\left[\frac{g}{r+1}\right]\right)  +1
 +\alpha \frac{n+1-s}{r}\leq \frac{d}{n} +\alpha\frac{n+1}{n}.
$$
Now $s\le n-r$; so, for any fixed $r$, the minimum value
for the left-hand side of this inequality is given by
$s=n-r$. By Lemma \ref{l2}, this minimum value is then a
decreasing function of $r$. Hence
$$
\frac1{n-1}\left(g-\left[\frac{g}{n}\right]\right)
+1 +\alpha \frac{n}{n-1}\leq \frac{d}{n} +\alpha\frac{n+1}{n},
$$
i.~e.
$$
\frac{\alpha}{n(n-1)}\leq \frac{d-n}{n}-\frac1{n-1}
\left(g-\left[\frac{g}{n}\right]\right),
$$
contradicting the hypothesis that $\alpha >\alpha_l$.
\end{proof}

\begin{remark}\begin{em}\label{re2} Under the hypotheses
 of Proposition \ref{pe3}, we have an exact sequence
\begin{equation}\label{tau}
0\rightarrow E'\rightarrow E\rightarrow \tau\rightarrow 0,
\end{equation}
where $\tau$ is a torsion sheaf.
If $t$ is the length of $\tau$, then $\deg E'=d-t$. Since
$(E',V)$ is generated and $H^0(E'^*)=0$, Lemma \ref{l1} gives
$d-t \geq g+n-\left[\frac{g}{n+1}\right]$, or equivalently
\begin{equation}\label{eqt}
t\le t_1:=d-g-n+\left[\frac{g}{n+1}\right]=
\left[\frac{\beta}{n+1}\right].
\end{equation}
We shall see later (Theorem \ref{t2}) that this
bound is best possible. In particular, if we write
$$
d_0=g+n-\left[\frac{g}{n+1}\right],
$$
then, for $d>d_0$, we have $t_1\ge1$, so there exists a
non-generated coherent system $(E,V)$ in $G_L$.
\end{em}\end{remark}

\begin{proof}[Proof of Theorem \ref{t1}]
Parts (2) and (3) follow from Propositions \ref{pe1} and
\ref{pe3}, and (1) then follows from Proposition \ref{pe2}.

(4) If $\beta>0$, it follows from
\cite[Lemma 4.2]{le}and \cite[Theorem 5.11]{bomn} that
$G(\alpha)$ is smooth and
irreducible of dimension $\beta$. The fact
that the generic element is
generated then follows from
Proposition \ref{pe2}.

(5) If $\beta=0$, it follows from
\cite[Lemma 4.2]{le} that $G(\alpha)$
is finite and that, as a scheme, it is reduced. By (\ref{tau})
and (\ref{eqt}), every element is generated.
The formula for the
cardinality of $G(\alpha)$ now follows from
\cite[Chapter V, formula (1.2)]{arb}.
\end{proof}

\section{A stratification of $G_L$}\label{strat}

Let
\begin{equation}\label{sigma0}
\Sigma_0=\{ (E,V)\in G_L | (E,V)\ {\rm is\ generated}\}.
\end{equation}
Clearly $\Sigma_0$ is open in $G_L$. If $\beta\ge0$, we know
from Theorem \ref{t1} that
$\Sigma_0\not= \emptyset$.  Moreover, by Remark \ref{re2}, the
complement of $\Sigma_0$ in $G_L$ is a disjoint
union of locally closed subsets $\Sigma_t$,
defined for $1\le t\le t_1$ by
\begin{equation}\label{sigmat}
\Sigma_t=\{ (E,V)\in G_L |\,\exists\
{\rm an\ exact\ sequence}\ (\ref{tau})\
{\rm with}\ \tau\ {\rm of\ length}\ t\}.
\end{equation}

We now define
$$
S_t= \bigcup _{i\geq t } \Sigma _i,
$$
where the $\Sigma_i$ are the locally
closed subsets of $G_L$ defined in
(\ref{sigma0}) and (\ref{sigmat}).
Clearly $G_L=S_0 \supset S_1 \supset
\dots \supset S_{t}\supset\dots$.
We would like to show that the subsets
$S_t$ define a well-behaved stratification of $G_L$.

We begin with a lemma, which will be needed again later
\begin{lemma}\label{lem:200}
Suppose that we have an exact sequence
$$0\longrightarrow F\longrightarrow E\longrightarrow\tau\longrightarrow0,$$
where $\tau$ is a torsion sheaf of length $t$, and that $V$ is a subspace of $H^0(F)$ of dimension $n+1$. Then
$$(E,V)\in G_L(n,d,n+1)\Leftrightarrow(F,V)\in G_L(n,d-t,n+1).$$
\end{lemma}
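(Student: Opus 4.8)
The plan is to use the characterization of $G_L$ provided by Theorem \ref{t1}(3), namely that membership in $G_L$ is equivalent to being generically generated together with the vanishing $H^0(E'^*)=0$. Since both $(E,V)$ and $(F,V)$ share the same space of sections $V$ of dimension $n+1$, and since $F\subset E$ with torsion quotient, the subsheaf image of the evaluation map $V\otimes\mathcal{O}\to E$ lands inside $F$ and equals the subsheaf image of $V\otimes\mathcal{O}\to F$. In other words, $E'=F'$ as subsheaves (they are the common image $V'$ of the evaluation map). This is the key observation that makes the two conditions transparently equivalent.

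First I would record that the evaluation map $V\otimes\mathcal{O}\to E$ factors through $F$, because every section in $V$ is a section of $F$ by hypothesis. Hence the subsheaf image $E'$ of $V\otimes\mathcal{O}\to E$ coincides with the subsheaf image $F'$ of $V\otimes\mathcal{O}\to F$; call this common image $E'=F'$. In particular $H^0(E'^*)=H^0(F'^*)$, so the condition $H^0(E'^*)=0$ holds for $(E,V)$ if and only if the analogous condition holds for $(F,V)$. Next I would check the generically generated condition: $(E,V)$ is generically generated means $E/E'$ is torsion, and $(F,V)$ is generically generated means $F/F'$ is torsion. Since $E/E'$ sits in an exact sequence with $F/E'=F/F'$ and the torsion sheaf $\tau=E/F$ (using $0\to F/F'\to E/E'\to\tau\to0$, which follows from the snake lemma applied to $F\hookrightarrow E$ and the common subsheaf $E'$), and $\tau$ is torsion of length $t$, we see $E/E'$ is torsion if and only if $F/F'$ is torsion. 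Thus the two generic-generation conditions are equivalent as well.

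Combining these two equivalences with Theorem \ref{t1}(3) gives the result directly: $(E,V)\in G_L(n,d,n+1)$ iff $(E,V)$ is generically generated with $H^0(E'^*)=0$ iff $(F,V)$ is generically generated with $H^0(F'^*)=0$ iff $(F,V)\in G_L(n,d-t,n+1)$. Here one must note that the degrees are compatible: $\deg E=d$ and $\deg F=d-t$ because $\tau$ has length $t$, so the types $(n,d,n+1)$ and $(n,d-t,n+1)$ are the correct ones to invoke in Theorem \ref{t1}. One should also verify that the hypothesis $\alpha>\max\{0,\alpha_l\}$ needed to apply Theorem \ref{t1}(3) is satisfied for both degrees; since we are working inside $G_L$ (large $\alpha$), this is automatic, but it is worth confirming that the characterization in Theorem \ref{t1}(3) is exactly a statement about $G_L$ and does not depend on a particular finite value of $\alpha$.

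The main obstacle, such as it is, lies in being careful about the snake-lemma bookkeeping that produces the exact sequence $0\to F/F'\to E/E'\to\tau\to 0$ and hence the equivalence of the torsion conditions; one must confirm that $E'=F'$ genuinely holds as subsheaves (not merely up to saturation) so that the quotients are computed against the same subsheaf. Beyond this, the argument is essentially formal, reducing the lemma to an application of Theorem \ref{t1}(3) together with the elementary fact that passing between $E$ and $F$ via a torsion elementary transformation changes neither the evaluation image nor, consequently, the defining conditions for $G_L$.
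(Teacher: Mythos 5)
Your proposal is correct and follows exactly the paper's argument: the paper's proof likewise observes that $E'=F'$ and that $(E,V)$ is generically generated if and only if $(F,V)$ is, and then cites Theorem \ref{t1}(3). You have simply filled in the routine details (the factorization of the evaluation map through $F$ and the exact sequence $0\to F/F'\to E/E'\to\tau\to0$) that the paper leaves to the reader.
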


\begin{proof} It is clear that $(E,V)$ is generically generated if and only if $(F,V)$ is generically generated and that $E'=F'$. The result follows at once from Theorem \ref{t1}(3).
\end{proof}

\begin{theorem}\label{t2}
Suppose $\beta\ge0$ and that the subsets $S_t$ of
$G_L$ are defined as above. Then
\begin{enumerate}
\item
$S_t$ is closed in $G_L$ and is non-empty if and only if
$0\le t\le t_1:=\left[ \frac{\beta}{n+1}\right]$;
\item for $1\le t\le t_1$,
$S_t\subset\overline{S_{t-1}\setminus S_t}$;
\item for $1\le t\le t_1$, $\dim S_t=\beta-t$;
\item $S_t$ is irreducible for $t<\frac{\beta}{n+1}$;
\item if $\frac{\beta}{n+1}$ is an integer,
then all irreducible components of $S_{t_1}$
have the same dimension.
\end{enumerate}\end{theorem}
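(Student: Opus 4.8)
The plan is to reduce the whole theorem to the analysis of a single incidence correspondence relating each stratum $\Sigma_t$ in $G_L=G_L(n,d,n+1)$ to the \emph{generated} stratum of a moduli space of lower degree, together with a space of elementary modifications. Concretely, by Remark \ref{re2} every $(E,V)\in\Sigma_t$ determines $F:=E'$ with $(F,V)$ generated, $H^0(F^*)=0$ and $E/F$ torsion of length $t$; by Lemma \ref{lem:200} and Theorem \ref{t1}(3) this gives $(F,V)\in\Sigma_0(n,d-t,n+1)$. Dualising the inclusion $F\hookrightarrow E$ shows that the datum of $E$ is equivalent to a length-$t$ torsion quotient of $F^*$, i.e.\ a point of $\mathrm{Quot}^t(F^*)$, which is smooth and irreducible of dimension $nt$. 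I would therefore form the parameter space $\mathcal I_t\to\Sigma_0(n,d-t,n+1)$ whose fibre over $(F,V)$ is $\mathrm{Quot}^t(F^*)$ (working with an \'etale-local universal family and the relative Quot scheme), together with the natural morphism $\mathcal I_t\to\Sigma_t\subset G_L$. Since $F=E'$ and the modification are recovered from $(E,V)$, this morphism is a bijection onto $\Sigma_t$.

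With this set up, most parts are bookkeeping. For the dimension in (3), note that $\beta(n,d-t,n+1)=\beta-t(n+1)$, so $\dim\mathcal I_t=(\beta-t(n+1))+nt=\beta-t$ whenever $\Sigma_0(n,d-t,n+1)\ne\emptyset$, that is (by Theorem \ref{t1}, equivalently Proposition \ref{pe2}) whenever $t\le t_1$; hence $\dim\Sigma_t=\beta-t$ and $\dim S_t=\max_{i\ge t}(\beta-i)=\beta-t$. This also yields the non-emptiness in (1), the emptiness for $t>t_1$ being (\ref{eqt}), while the closedness of $S_t$ follows from upper semicontinuity of the length of the cokernel of the evaluation map in the universal family. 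For (4), when $t<\beta/(n+1)$ we have $\beta-t(n+1)>0$, so $\Sigma_0(n,d-t,n+1)$ is irreducible by Theorem \ref{t1}(4); as the Quot fibres are irreducible of constant dimension, $\mathcal I_t$ and hence $\Sigma_t$ are irreducible, and (2) gives $S_t=\overline{\Sigma_t}$ (since $S_t$ is closed and $S_{t+1}\subset\overline{\Sigma_t}$). For (5), when $\beta/(n+1)=t_1$ is an integer we have $S_{t_1}=\Sigma_{t_1}$ and $\Sigma_0(n,d-t_1,n+1)$ is finite (Theorem \ref{t1}(5)); the components of $\Sigma_{t_1}$ are then the Quot schemes over these finitely many points, each irreducible of dimension $nt_1=\beta-t_1$, so all components have equal dimension.

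The heart of the matter is (2), the assertion $\Sigma_t\subset\overline{\Sigma_{t-1}}$ (note $S_{t-1}\setminus S_t=\Sigma_{t-1}$, and the full statement then follows from the chain $\Sigma_i\subset\overline{\Sigma_{i-1}}$ by induction). Here I would argue by an explicit one-parameter degeneration. Given $(E,V)\in\Sigma_t$ with $F=E'$ and $E/F$ of length $t$, choose $F\subset F_1\subset E$ with $F_1/F$ of length $1$ and $E/F_1$ of length $t-1$; then $(F_1,V)\in G_L(n,d-t+1,n+1)$ by Lemma \ref{lem:200}, and since $\beta(n,d-t+1,n+1)=\beta-(t-1)(n+1)\ge n+1>0$ this moduli space is irreducible with generated locus dense, so $(F_1,V)\in\overline{\Sigma_0(n,d-t+1,n+1)}$. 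I would thus choose a smooth curve $T\ni0$ in $G_L(n,d-t+1,n+1)$ through $(F_1,V)$ meeting the generated locus, pull back a universal family to obtain $(\mathcal F_1,\mathcal V)$ over $T$ with central fibre $(F_1,V)$ and generated general fibre, and then use properness of the relative $\mathrm{Quot}^{t-1}$ scheme to propagate the given length-$(t-1)$ modification $F_1\subset E$ to a family $F_{1,s}\subset E_s$ with $E_0=E$. By Lemma \ref{lem:200} each $(E_s,V_s)\in G_L(n,d,n+1)$; since $V_s\subset H^0(F_{1,s})$ we always have $E_s'\subseteq F_{1,s}$, with equality for $s\ne0$ because then $V_s$ generates $F_{1,s}$. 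Hence $(E_s,V_s)\in\Sigma_{t-1}$ for $s\ne0$ while $(E_0,V_0)=(E,V)$, giving $(E,V)\in\overline{\Sigma_{t-1}}$.

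The main obstacle I anticipate is precisely this construction of a single family realising the degeneration of $(F_1,V)$ into the generated locus \emph{simultaneously} with the modification to $E$: one must ensure that the length-$(t-1)$ modification at $s=0$ extends over $T$ (handled by the projectivity of the relative Quot scheme, possibly after a finite base change) and that the resulting $E_s$ remains locally free for $s\ne0$. The identity $E_s'=F_{1,s}$ is forced, as noted above, so the only genuine care is with the global existence of the family, for which working \'etale-locally on $G_L(n,d-t+1,n+1)$ suffices; the case $\beta=0$ (where $t_1=0$ and all $\Sigma_t$ with $t\ge1$ are empty) is trivial and may be disposed of at the outset.
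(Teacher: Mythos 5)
Your proof is correct, and it reorganises the argument in a way that genuinely differs from the paper's. The paper proceeds by induction on $t_1$, performing one elementary transformation at a time: it uses the projective bundle $\mathbb{P}\mathcal{U}$ over $X\times G_L(n,d-1,n+1)$ (the relative $\mathrm{Quot}^1$, as in the Hecke correspondence) and the resulting morphism $\Psi$ to $G_L$, deducing all five statements from the identities $S_t=\Psi(p^{-1}(X\times S_{t-1,d-1}))$ and $\Psi^{-1}(S_{t-1}\setminus S_t)=p^{-1}(X\times(S_{t-2,d-1}\setminus S_{t-1,d-1}))$, the inductive hypothesis, and generic finiteness of $\Psi$. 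You instead describe each stratum $\Sigma_t$ in a single step as the bijective image of a relative $\mathrm{Quot}^t$ over the generated locus of $G_L(n,d-t,n+1)$, which reduces (1), (3), (4) and (5) to dimension and irreducibility bookkeeping, and you prove the frontier property (2) by an explicit one-parameter degeneration, deforming $(F_1,V)$ into the generated locus of $G_L(n,d-t+1,n+1)$ while carrying the length-$(t-1)$ modification along. Both arguments rest on the same pillars (Lemma \ref{lem:200}, Theorem \ref{t1}, and the elementary-transformation/Quot correspondence); yours trades the induction for a direct degeneration, at the price of having to build the two-step family in (2) by hand, while the paper's inductive formulas deliver (2) and (4) simultaneously with no extra work. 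Two minor points, neither of which affects validity: for propagating the modification in (2), properness of the relative $\mathrm{Quot}^{t-1}$ is not quite the right tool --- what you need is that the given point of the central fibre lies in the closure of the nearby fibres, and this follows from the \emph{smoothness} of the relative Quot scheme of torsion quotients over $T$ (the obstruction space $\mathrm{Ext}^1(\ker,\mathrm{coker})$ vanishes, being an $H^1$ of a torsion sheaf), which even yields a local section through the given point; and the universal families you invoke exist globally rather than just \'etale-locally, since $\gcd(n,d-t,n+1)=1$ for every $t$ (cf.\ \cite[Proposition A.8]{bgmmn2}, as used in the paper).
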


\begin{proof} The fact that $S_t$ is empty if
$t> t_1=\left[ \frac{\beta}{n+1}\right]$ has already been
proved in Remark \ref{re2}. We prove the rest of the theorem by
induction on $t_1$, the result being an immediate consequence
of Theorem \ref{t1} if $t_1=0$.

Suppose therefore that $t_1\ge 1$. We consider the moduli space
$$
G_{L,d-1}:=G_L(n,d-1,n+1)
$$
and denote by $S_{t,d-1}$ the subset of $G_{L,d-1}$ given by
$$
S_{t,d-1}:= \{ (F,V)\in G_{L,d-1} |\,\exists\
 {\rm an\ exact\ sequence}\ (\ref{tau})\ {\rm with}\
\tau\ {\rm of\ length}\ \ge t\}.
$$
The maximum value of $t$ on $G_{L,d-1}$ is
$$
\left[\frac{\beta(1,d-1,n+1)}{n+1}\right]=t_1-1,
$$
so we can assume inductively that the theorem holds for $G_{L,d-1}$.

Note next that, if $(F,V)\in G_{L,d-1}$ and $E$ is defined by an
elementary transformation
\begin{equation}\label{eqn2}
0\rightarrow F\rightarrow E\rightarrow \tau\rightarrow0,
\end{equation}
with $\tau$ a torsion sheaf of length $1$, then $(E,V)\in G_L$ by
Lemma \ref{lem:200}. In fact it is easy to see that the $(E,V)$
obtained in this way are precisely the elements of $S_1$ and, more
generally, for $1\le t\le t_1$,
\begin{equation}\label{elem}
(E,V)\in S_t\Leftrightarrow (F,V)\in S_{t-1,d-1}.
\end{equation}

The next step is to carry out this construction for families
of coherent systems.  Since $(n,d-1,n+1)$ are coprime
there is a universal family
$(\mathcal{U},\mathcal {V})$
parametrised by $G_{L,d-1}$ \cite[Proposition A.8]{bgmmn2}.
Denote by $p:\mathbb{P}\mathcal{U}\rightarrow X\times
 G_{L,d-1}$ the natural projection.
As in the Hecke correspondence of \cite{nr},
$\mathbb{P}\mathcal{U}$ parametrises the triples
$$
(F,V,0\rightarrow F\rightarrow E\rightarrow\tau\rightarrow0)
$$
for which $(F,V)\in G_{L,d-1}$ and
$\tau$ has length $1$. The universal property of
$G_L$ now gives us a diagram
$$
\begin{array}{ccl}\mathbb{P}\mathcal{U}&
\stackrel{\Psi}{\longrightarrow}
&G_L\\p\downarrow\ \ &&\\\ \ \ \ \ \ \ X\times G_{L,d-1}.&&
\end{array}
$$
By (\ref{elem}), we have
\begin{equation}\label{st}
S_t= \Psi (p^{-1}(X\times S_{t-1,d-1})),\ \
 \Psi^{-1}(S_{t-1}\setminus
S_t)=p^{-1}(X\times (S_{t-2,d-1}\setminus S_{t-1,d-1})).
\end{equation}
The fact that $S_t\ne\emptyset$ for $t\le t_1$ follows at once.
Moreover $G_{L,d-1}$ is a projective variety and, by inductive
hypothesis, $S_{t-1,d-1}$ is  closed and,
 provided $t-1<\frac{\beta}{n+1}-1$, also irreducible;
hence $S_t$ is closed in $G_L$, completing the proof of (1).
Properties (2) and (4) follow immediately from (\ref{st}).

For (3), note that, by the inductive hypothesis,
\begin{equation}\label{dim}
\dim(p^{-1}(X\times S_{t-1,d-1}))=\beta(n,d-1,n+1)-(t-1)+1+(n-1)=\beta-t.
\end{equation}
Moreover, if $(E,V)\in \Sigma_t$ and the torsion
sheaf $\tau$ of (\ref{sigmat}) has support
consisting of $t$ distinct points, then
$\Psi^{-1}(E,V)$ consists of precisely $t$ points.
Hence $\Psi$ is generically finite on
$(p^{-1}(X\times S_{t-1,d-1}))$, so (3) follows from (\ref{dim}).

Finally, for (5), suppose $\frac{\beta}{n+1}$ is an integer and
let $S'$ be any irreducible component of $S_{t_1-1,d-1}$; by
inductive hypothesis, $\dim S'=\beta(n,d-1,n+1)-(t_1-1)$. As in
(\ref{dim}), we have
$$
\dim(\Psi(p^{-1}(X\times S'))=\beta-t_1.
$$
The result follows.
\end{proof}

\section{The Top Critical Value }\label{top}

In the previous sections  we gave a description
of $G_L(n,d,n+1)$. We shall show now that the bound of
Theorem \ref{t1} is best
possible if $\alpha _l >0$ and analyse what happens at this value
of the parameter. Note that the condition $\alpha_l>0$ implies that $n\ge2$.

\begin{theorem}\label{t2a}
Suppose $\alpha_l>0$. Then there exists a coherent
system $(E,V)$ which
is $\alpha_l^+$-stable and $\alpha_l$-semistable,
but not $\alpha_l$-stable.
\end{theorem}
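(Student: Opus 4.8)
The plan is to realise the equality case identified in Remark \ref{rmk:200} by an explicit extension of coherent systems, and then to read off the three required properties from Theorem \ref{t1} together with a continuity argument. By Remark \ref{rmk:200}, the only way a proper quotient of a type $(n,d,n+1)$ system can have $\alpha_l$-slope equal to that of the whole system is for the quotient to have type $(n-1,d_2,n)$ with $d_2=g+n-1-\left[\frac gn\right]$. I would therefore build $(E,V)$ as a non-split extension of coherent systems
\[
0\to(E_1,V_1)\to(E,V)\to(E_2,V_2)\to0,
\]
where $(E_2,V_2)$ has type $(n-1,d_2,n)$ and $(E_1,V_1)$ has type $(1,d_1,1)$ with $d_1=d-d_2=d-g-n+1+\left[\frac gn\right]$. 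A short computation using $\alpha_l=d(n-1)-n(n-1+g-[g/n])$ gives $d_1=(d+\alpha_l)/n>0$ and shows $\mu_{\alpha_l}(E_1,V_1)=\mu_{\alpha_l}(E_2,V_2)=\mu_{\alpha_l}(E,V)$.

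For the ingredients, since $\beta(n-1,d_2,n)=g-n\left[\frac gn\right]\ge0$, Theorem \ref{t1} applied in rank $n-1$ provides a generated $(E_2,V_2)\in G_L(n-1,d_2,n)$, which then has $H^0(E_2^*)=0$; and as $d_1\ge1$ I would take $(E_1,V_1)=(\mathcal O(D),\langle s\rangle)$ with $D$ a general effective divisor of degree $d_1$ and $s$ its defining section. To see that non-trivial extensions exist, I would compute $C_{21}$ from (\ref{c12}), obtaining $C_{21}=d-g-n+1$, which is $\ge1$ because $\alpha_l>0$ forces $d\ge g+n$ (Remark \ref{r1}). By (\ref{eqn:200}) this gives $\mathrm{Ext}^1((E_2,V_2),(E_1,V_1))\neq0$, so a non-split extension $(E,V)$ exists; I would moreover arrange that the underlying extension of bundles $0\to\mathcal O(D)\to E\to E_2\to0$ is non-split.

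Granting $(E,V)\in G_L$, the theorem follows quickly. Since the quotient $(E_2,V_2)$, equivalently the subsystem $(E_1,V_1)$, has $\alpha_l$-slope equal to $\mu_{\alpha_l}(E,V)$, the system $(E,V)$ is not $\alpha_l$-stable. By Theorem \ref{t1}(2), membership in $G_L$ means $(E,V)$ is $\alpha$-stable for every $\alpha>\alpha_l$, hence $\alpha_l^+$-stable; and letting $\alpha\downarrow\alpha_l$ in the inequalities $\mu_\alpha(F,W)<\mu_\alpha(E,V)$, which are affine in $\alpha$, yields $\mu_{\alpha_l}(F,W)\le\mu_{\alpha_l}(E,V)$ for every subsystem, so $(E,V)$ is $\alpha_l$-semistable.

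The crux, and the step I expect to be the main obstacle, is to verify that $(E,V)\in G_L$, i.e. by Theorem \ref{t1}(3) that $(E,V)$ is generically generated with $H^0(E'^*)=0$. Generic generation is automatic: the composite $V\otimes\mathcal O\to E\to E_2$ is surjective because $(E_2,V_2)$ is generated, while $s$ generates $\mathcal O(D)$ off $D$, so the cokernel of $V\otimes\mathcal O\to E$ is torsion. For the vanishing, writing $M:=E'\cap\mathcal O(D)$ (a subsheaf of $\mathcal O(D)$ containing $\mathrm{im}\,s\cong\mathcal O$) one has $0\to E_2^*\to E'^*\to M^*\to0$ with $H^0(E_2^*)=0$; hence $H^0(E'^*)=0$ unless $M=\mathrm{im}\,s\cong\mathcal O$ and the resulting extension of $\mathcal O$ by $E_2^*$ splits. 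A direct check shows this degenerate possibility forces the bundle sequence $0\to\mathcal O(D)\to E\to E_2\to0$ to split, contrary to our choice, so $H^0(E'^*)=0$. The delicate point, which I would handle by a dimension count on $\mathrm{Ext}^1(E_2,\mathcal O(D))$ subject to the constraint that $V_2$ lifts to sections of $E$, is exactly the existence of a non-split \emph{bundle} extension compatible with the coherent-system structure.
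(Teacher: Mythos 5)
Your construction coincides with the paper's: the same extension $0\to(E_1,V_1)\to(E,V)\to(E_2,V_2)\to0$ with $(E_2,V_2)\in G_L(n-1,d_2,n)$, $d_2=g+n-1-\left[\frac{g}{n}\right]$, $(E_1,V_1)$ of type $(1,d-d_2,1)$, the same slope identity, and the same computation $C_{21}=d-g-n+1>0$ guaranteeing a non-split extension of coherent systems. Where you diverge is in proving $\alpha_l^+$-stability. The paper does this directly: since $(E_1,V_1)$ and $(E_2,V_2)$ are non-isomorphic, $\alpha_l$-stable and of equal $\alpha_l$-slope, the only proper subsystem of a non-split extension realising equality at $\alpha_l$ is $(E_1,V_1)$, and since $k_1/n_1=1<(n+1)/n$ this subsystem does not violate stability for $\alpha>\alpha_l$. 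You instead route through Theorem \ref{t1}(3), which forces you to verify $H^0(E'^*)=0$, and you reduce that to the non-splitness of the underlying \emph{bundle} extension $0\to\mathcal{O}(D)\to E\to E_2\to0$.

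That reduction is where the argument breaks. A non-split extension of coherent systems need not induce a non-split extension of bundles, and for large $d$ the latter cannot exist at all: by Serre duality $\mathrm{Ext}^1(E_2,\mathcal{O}(D))\cong H^0(E_2\otimes K(-D))^*$, and $\deg\bigl(E_2\otimes K(-D)\bigr)=nd_2-(n-1)d+(n-1)(2g-2)$ tends to $-\infty$ as $d$ grows with $d_2$ fixed, so this group vanishes and every bundle extension splits. Since $\alpha_l>0$ imposes no upper bound on $d$, the ``delicate point'' you defer to a dimension count is not merely delicate but unachievable in that range; no count can produce a class in a zero group. (The conclusion $H^0(E'^*)=0$ does still hold when $E\cong\mathcal{O}(D)\oplus E_2$, because the subextension $0\to M\to E'\to E_2\to0$ can be non-split even though the ambient one splits; but proving that requires exploiting the non-splitness of the coherent-system extension directly, which is in substance the paper's subsystem argument.) The remainder of your proposal — existence of the two factors, the identity $\mu_{\alpha_l}(E_1,V_1)=\mu_{\alpha_l}(E_2,V_2)$, $C_{21}>0$, and $\alpha_l$-semistability by letting $\alpha$ decrease to $\alpha_l$ — agrees with the paper and is correct.
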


\begin{proof}
We shall construct $(E,V)$ as an extension
\begin{equation}\label{ev}
0\rightarrow(E_1,V_1)\rightarrow(E,V)\rightarrow(E_2,V_2)\rightarrow0,
\end{equation}
where
\begin{itemize}\item[(\ref{ev}a)] $(E_2,V_2)\in G_L(n-1,d_2,n)$ with $d_2=g+n-1-\left[\frac{g}{n}\right]$;
\item[(\ref{ev}b)] $(E_1,V_1)$ is of type $(1,d-d_2,1)$.
\end{itemize}
Note that $d>d_2$ by (\ref{alphal}), so $(E_1,V_1)$ exists.
Moreover $\beta(n-1,d_2,n)\ge0$; so, by Theorem \ref{t1}, $(E_2,V_2)$ also exists and indeed is
$\alpha$-stable for all $\alpha>0$ and in particular for
$\alpha=\alpha_l$. It is easy to check from the definition (\ref{alpha0}) that
\begin{equation}\label{eqn:mu}
\mu_{\alpha_l}(E_1,V_1)=\mu_{\alpha_l}(E_2,V_2),
\end{equation}
so $(E,V)$ is $\alpha_l$-semistable but not $\alpha_l$-stable.
Moreover, since $(E_1,V_1)$ and $(E_2,V_2)$ are both $\alpha_l$-stable but not isomorphic, it follows from (\ref{eqn:mu}) that
\begin{equation}\label{eqn:hom=0}
{\rm Hom}((E_1,V_1),(E_2,V_2))=0={\rm Hom}((E_2,V_2),(E_1,V_1)).
\end{equation}

Now any subsystem of $(E,V)$ which contradicts
$\alpha_l^+$-stability must also contradict
$\alpha_l$-stability. If the extension (\ref{ev})
is non-trivial, the only
subsystem which contradicts $\alpha_l$-stability
is $(E_1,V_1)$ and clearly this does not
contradict $\alpha_l^+$-stability. It remains
only to prove that there exists a non-trivial
extension (\ref{ev}), or equivalently to prove that
$$
{\rm Ext}\,^1((E_2,V_2),(E_1,V_1))\ne0.
$$

Now, by (\ref{eqn:200}) and (\ref{c12}),
$$\dim{\rm Ext}\,^1((E_2,V_2),(E_1,V_1))\ge C_{21}=(k_2-n_2)(d_1-n_1(g-1))+n_1d_2-k_1k_2.
$$
Here we have $(n_1,d_1,k_1)=(1,d-d_2,1)$,
 $(n_2,d_2,k_2)=(n-1,d_2,n)$, so
$$
C_{21}=(d-d_2-g+1)+d_2-n=d-g-n+1.
$$
Since $\alpha_l>0$, it follows from
(\ref{alphal}) that $d-g-n>0$ and so
$C_{21}>0$ as required.
\end{proof}

\begin{corollary}\label{flipl+}
If $\alpha_l>0$, then it is equal to the top
critical value $\alpha_L$. Moreover the flip
locus $G_L^+$ is given precisely by the
non-trivial extensions {\em(\ref{ev})}
 which satisfy {\em(\ref{ev}a)} and {\em(\ref{ev}b)}
and has dimension $\le\beta-1$.

\end{corollary}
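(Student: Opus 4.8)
The statement to prove is Corollary \ref{flipl+}, which has three assertions: that $\alpha_l = \alpha_L$ (it is the top critical value), that the flip locus $G_L^+$ is precisely the non-trivial extensions satisfying (\ref{ev}a) and (\ref{ev}b), and that $\dim G_L^+ \le \beta - 1$. Let me plan each piece, drawing on the general flip machinery of Section \ref{prelim} and the results just established.

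\begin{proof}
That $\alpha_l$ is a critical value is the content of Theorem \ref{t2a}: we exhibited an element of $G_L$ which is $\alpha_l$-semistable but not $\alpha_l$-stable, so $\alpha_l$ is an actual critical value in the sense of \cite[Definition 2.4]{bomn}. By Theorem \ref{t1}(2), $G(\alpha)=G_L$ for all $\alpha>\max\{0,\alpha_l\}$, so there is no critical value exceeding $\alpha_l$; hence $\alpha_l=\alpha_L$.

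For the description of $G_L^+$, recall from Section \ref{prelim} that $G_L^+$ consists of those $(E,V)\in G_L$ admitting an exact sequence (\ref{eqif}) with both sub- and quotient systems $\alpha_L$-semistable and $\alpha_L^+$-stable, equal $\alpha_L$-slopes, and $k_1/n_1<k/n$. The key point is to use Remark \ref{rmk:200} to pin down the numerical type of such a destabilising sequence. Indeed, any $(E,V)\in G_L^+$ has a proper quotient coherent system $(E_2,V_2)$ with $\mu_{\alpha_l}(E_2,V_2)=\mu_{\alpha_l}(E,V)$; since $(E,V)$ is generically generated with $H^0(E'^*)=0$ by Theorem \ref{t1}(3), Lemma \ref{l1} gives $k_2\ge n_2+1$ and $d_2\ge g+n_2-[\frac{g}{n_2+1}]$. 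The equality case of Remark \ref{rmk:200} then forces exactly $n_2=n-1$, $k_2=n$ and $d_2=g+n-1-[\frac{g}{n}]$, which is condition (\ref{ev}a); the complementary type of the sub-object is $(1,d-d_2,1)$, which is (\ref{ev}b). Conversely every non-trivial extension of this type lies in $G_L^+$ by the argument of Theorem \ref{t2a}. One must also check that $(E_2,V_2)$ is forced to be $\alpha_l$-stable (not merely semistable): this follows since $\beta(n-1,d_2,n)\ge0$ places it in $G_L(n-1,d_2,n)$ by Theorem \ref{t1}, where it is $\alpha$-stable for all $\alpha>0$.

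For the dimension bound, I would apply Lemma \ref{alpha+} to the two families $(E_1,V_1)$ of type $(1,d-d_2,1)$ and $(E_2,V_2)$ of type $(n-1,d_2,n)$. The family of $(E_1,V_1)$ depends on at most $\beta(1,d-d_2,1)$ parameters and, by Theorem \ref{t1}(4)--(5), the family of $(E_2,V_2)$ depends on at most $\beta(n-1,d_2,n)$ parameters. By Remark \ref{rmk100} we may take $h_0=0$ since $(E,V)$ is $\alpha$-stable, and one computes $\dim{\mathbb H}^2_{21}$ from (\ref{h2}); the hypothesis $C_{12}-h_0-h_2>0$ of Lemma \ref{alpha+} must then be verified by a direct calculation using (\ref{c12}). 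The main obstacle is precisely this final numerical estimate: I expect the bound $C_{21}=d-g-n+1>0$ already obtained in Theorem \ref{t2a} to be the relevant quantity controlling the generic fibre dimension, and the conclusion $\dim G_L^+\le\beta-1$ to drop out of Lemma \ref{alpha+} once the inequality on $C_{12}$ is confirmed.
\end{proof}
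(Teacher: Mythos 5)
Your treatment of the first two assertions follows the paper's own proof: $\alpha_l=\alpha_L$ comes from Theorems \ref{t1} and \ref{t2a}, and the identification of $G_L^+$ comes from Lemma \ref{l1} together with the equality case of Remark \ref{rmk:200}, which forces $n_2=n-1$, $k_2=n$, $d_2=g+n-1-\left[\frac{g}{n}\right]$. That part is correct and is essentially the argument in the paper.

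The dimension bound, however, is asserted rather than proved, and the quantity you single out is the wrong one. Lemma \ref{alpha+} with $h_0=0$ (Remark \ref{rmk100}) requires the inequality $C_{12}-\dim{\mathbb H}^2_{21}>0$, where by (\ref{c12})
$$C_{12}=(n-1)(d-d_2)-n=(n-1)\left(d-g-n+1+\left[\tfrac{g}{n}\right]\right)-n;$$
this is not the number $C_{21}=d-g-n+1$ from Theorem \ref{t2a}, which controls the non-vanishing of $\mathrm{Ext}^1((E_2,V_2),(E_1,V_1))$ and is irrelevant to the codimension estimate. The verification you leave open is precisely the substance of the proof and has two steps. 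First, the hypothesis $\alpha_l>0$, via (\ref{alphal}), yields $C_{12}>g-\left[\frac{g}{n}\right]-1\ge0$. Second, ${\mathbb H}^2_{21}=H^0(E_1^*\otimes N_2\otimes K)^*$ where $N_2$ is a line bundle of degree $-d_2$ (every element of $G_L(n-1,d_2,n)$ is generated here), so $E_1^*\otimes N_2\otimes K$ is a line bundle of degree $2g-2-d$; its $h^0$ vanishes if $d>2g-2$, and otherwise Clifford's theorem gives $h^0\le g-\frac{d}{2}<g-\frac{g+n}{2}$ since $\alpha_l>0$ forces $d>g+n$. The required inequality then reduces to $\frac{g+n}{2}\ge\left[\frac{g}{n}\right]+1$, which holds for $n\ge2$. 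Without these estimates the conclusion $\dim G_L^+\le\beta-1$ does not follow; as written, your argument identifies the correct framework but omits the entire numerical content.
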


\begin{proof} The fact  that $\alpha_L=\alpha_l$ follows at once from Theorems \ref{t1} and \ref{t2a}. If $(E,V)\in G_L^+$, we have a sequence (\ref{eqif}) for which $(E_2,V_2)$ is $\alpha_l^+$-stable and (\ref{eqn:alpha}) holds with $\alpha_i=\alpha_l$. By Lemma \ref{l1}, we must have $k_2\ge n_2+1$ and $d_2\ge g+n_2-\left[\frac{g}{n_2+1}\right]$. By Remark \ref{rmk:200}, it follows that
$$n_2=n-1,\ \ k_2= n,\ \ d_2 =g+n-1-\left[\frac{g}{n}\right].$$
Hence all the conditions of (\ref{ev}) hold.

According to Lemma \ref{alpha+} and Remark \ref{rmk100},
it remains to prove that
$$
C_{12}-h^0(E_1^*\otimes N_2\otimes K)>0.
$$
Putting in values from (\ref{ev}), we have, since $\alpha_l>0$,
$$
C_{12}=(n-1)\left(d-g-n+1+\left[\frac{g}{n}\right]\right)
-n>g-\left[\frac{g}{n}\right]-1
\ge0.
$$
On the other hand, $E_1^*\otimes N_2\otimes K$ is
a line bundle of degree $2g-2-d$. If $d>2g-2$, we
are finished. If $d\le 2g-2$, then, by Clifford's Theorem,
$$
h^0(E_1^*\otimes N_2\otimes K)\le g-\frac{d}2<g-\frac{g+n}{2}.
$$
It is therefore sufficient to prove that
$$
\frac{g+n}2\geq \left[\frac{g}{n}\right]+1.
$$
Since $n\ge2$, this is obvious.
\end{proof}

\begin{remark}\label{re5}\begin{em} The estimate
for the dimension of $G_L^+$ in the proof of Corollary
\ref{flipl+} is sufficient for our purposes, but
is quite crude and can certainly be improved.

\end{em}\end{remark}

We now turn to the determination of the flip locus $G_L^-$.
\begin{proposition}\label{flipl-}
If $\alpha_l>0$, then the flip locus $G_L^-$
consists of the non-trivial extensions
\begin{equation}\label{eqn:201}
0\rightarrow(E_2,V_2)\rightarrow(E,V)
\rightarrow(E_1,V_1)\rightarrow0,
\end{equation}
where $(E_1,V_1)$ and $(E_2,V_2)$ satisfy the
same properties as
in {\em(\ref{ev})}, and has dimension
$\le\beta-1$.
\end{proposition}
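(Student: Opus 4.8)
The plan is to prove that $G_L^-$ consists precisely of the non-trivial extensions of the form (\ref{eqn:201}) with $(E_1,V_1)$, $(E_2,V_2)$ satisfying (\ref{ev}a) and (\ref{ev}b), by combining the general description of $G_L^-$ recalled in section \ref{prelim} with the numerical rigidity already exploited in the proof of Corollary \ref{flipl+}. Recall that $G_L^-$ is the set of $(E,V)\in G_{L-1}$ admitting an exact sequence $0\to(E_2,V_2)\to(E,V)\to(E_1,V_1)\to0$ with both factors $\alpha_l$-semistable and $\alpha_l^-$-stable and satisfying (\ref{eqn:alpha}), namely $\mu_{\alpha_l}(E_1,V_1)=\mu_{\alpha_l}(E_2,V_2)$ together with $k_1/n_1<k/n$. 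The first step is to pin down the types $(n_j,d_j,k_j)$. Since $(E,V)\in G_{L-1}$ is $\alpha_l^-$-stable with $\alpha_l^-<\alpha_l$ still satisfying $\alpha>\max\{0,\alpha_l\}$ narrowly failing, I expect that the same machinery applies: by Theorem \ref{t1}(3), $(E,V)$ is generically generated with $H^0(E'^*)=0$, so its quotient $(E_1,V_1)$ inherits these properties by Lemma \ref{l1}, forcing $k_1\ge n_1+1$ and $d_1\ge g+n_1-[g/(n_1+1)]$. Then Remark \ref{rmk:200}, applied with the roles adjusted and using the equality case at $\alpha=\alpha_l$, forces $n_1=n-1$, $k_1=n$, $d_1=g+n-1-[g/n]$, i.e. $(E_1,V_1)$ satisfies (\ref{ev}a) and $(E_2,V_2)$ has type $(1,d-d_2,1)$ and hence satisfies (\ref{ev}b).

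The second step is to verify that every such extension genuinely lies in $G_L^-$ and conversely that these are all $\alpha_l^-$-stable. The stability analysis is symmetric to that in Theorem \ref{t2a}: since $(E_1,V_1)$ and $(E_2,V_2)$ are both $\alpha_l$-stable with equal $\alpha_l$-slope by (\ref{eqn:mu}), the extension $(E,V)$ is $\alpha_l$-semistable, and the only subsystem contradicting $\alpha_l$-stability is $(E_2,V_2)$ (the sub, since here the sequence runs the other way), which has $\mu_{\alpha_l^-}(E_2,V_2)<\mu_{\alpha_l^-}(E,V)$ for $\alpha_l^-<\alpha_l$; hence $(E,V)$ is $\alpha_l^-$-stable. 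Nontriviality of the extension is what prevents $(E,V)$ from splitting as $(E_1,V_1)\oplus(E_2,V_2)$, which would be strictly $\alpha_l^-$-unstable. The existence of non-trivial extensions again follows from $\mathrm{Ext}^1((E_1,V_1),(E_2,V_2))\ne0$; by (\ref{eqn:200}) and (\ref{c12}) this Ext-group has dimension $\ge C_{12}$, and the computation $C_{12}=(n-1)(d-g-n+1+[g/n])-n>0$ already carried out in the proof of Corollary \ref{flipl+} shows it is non-zero.

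For the dimension bound $\dim G_L^-\le\beta-1$, the plan is to invoke Lemma \ref{alpha+} in its dual form, estimating the number of parameters on which extensions (\ref{eqn:201}) depend. Here the roles of the indices are interchanged relative to Corollary \ref{flipl+}: I count $(E_1,V_1)$ varying in a family of dimension $\beta(1,d_1,1)$ and $(E_2,V_2)$ in a family of dimension $\beta(n-1,d_2,n)$, then bound the extension parameters using $C_{21}$ together with $\dim\mathbb{H}^0_{12}$ and $\dim\mathbb{H}^2_{12}$. By Remark \ref{rmk100} applied to the $\alpha_l^-$-stable $(E,V)$, the relevant $\mathbb{H}^0$ term vanishes, and $\mathbb{H}^2_{12}=H^0(E_2^*\otimes N_1\otimes K)^*$ must be controlled; since $(E_1,V_1)$ has type $(1,d_1,1)$ with $k_1=n_1$ here being $n=n_1+1$—more carefully, $N_1$ is the kernel of $V_1\otimes\mathcal O\to E_1$ of rank $k_1-n_1=1$—this reduces to a line-bundle $H^0$ estimate essentially identical to the Clifford-type bound in Corollary \ref{flipl+}.

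The main obstacle I anticipate is the very last step: the dimension count requires knowing that $C_{12}-h_0-h_2>0$ with the indices now swapped, and one must check that the Clifford/degree estimate $h^0(E_2^*\otimes N_1\otimes K)<C_{12}$ still goes through when $N_1$ is the rank-one kernel associated to the type-$(n-1,d_2,n)$ factor rather than the type-$(1,\cdot,1)$ factor. The degree of the relevant line bundle and the structure of $N_1$ differ from the $G_L^+$ case, so the inequality must be reverified rather than merely transcribed; I expect it to hold by the same argument that $(g+n)/2\ge[g/n]+1$ for $n\ge2$, but this is the point where a genuine (if short) recomputation is unavoidable, and where an overly mechanical transfer from Corollary \ref{flipl+} could introduce an error.
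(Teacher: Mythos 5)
Your overall skeleton (pin down the types numerically, check stability on either side of $\alpha_l$, then count parameters via Lemma \ref{alpha+}) is the paper's strategy, but the execution contains errors that are not merely cosmetic. First, you cannot apply Theorem \ref{t1}(3) to $(E,V)\in G_L^-$: such $(E,V)$ lie in $G_{L-1}$ but are by definition \emph{not} $\alpha_l^+$-stable, hence not in $G_L$, and indeed they fail the condition $H^0(E'^*)=0$ (see Remark \ref{re6}, where $E'\cong\mathcal{O}\oplus G$). So Lemma \ref{l1} cannot be invoked to constrain the quotient. The paper instead constrains the \emph{sub}-system $(E_2,V_2)$: the flip-locus condition (\ref{eqn:alpha}) gives $k_1/n_1<k/n$ for the quotient, hence $k_2\ge n_2+1$ for the sub, and then the non-emptiness constraint of Proposition \ref{prop1} (applied to the $\alpha_l^-$-stable $(E_2,V_2)$ itself) yields $\beta(n_2,d_2,n_2+1)\ge0$ and the degree bound, after which Remark \ref{rmk:200} forces $(n_2,d_2,k_2)=(n-1,g+n-1-[g/n],n)$. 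Second, you end up assigning the types backwards: you conclude that the quotient is the rank-$(n-1)$ piece and the sub is the $(1,d-d_2,1)$ piece, which contradicts both the statement of the proposition and the inequality $k_1/n_1<k/n$, and is inconsistent with your own stability check in the next step, where you need the sub to satisfy $k_2/n_2>k/n$ for its $\alpha$-slope to drop below that of $(E,V)$ when $\alpha$ decreases.

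This reversal also derails your dimension count. With the correct assignment the quotient is $(E_1,V_1)$ of type $(1,d_1,1)$, so $N_1=\ker(V_1\otimes\mathcal{O}\to E_1)=0$ (not of rank $k_1-n_1=1$ as you write), hence $\mathbb{H}^2_{12}=H^0(E_2^*\otimes N_1\otimes K)^*=0$ with no Clifford-type estimate required at all; the only inputs are $\mathbb{H}^0=0$ from Remark \ref{rmk100} and $C_{21}=d-g-n+1>0$ (already computed in the proof of Theorem \ref{t2a}, using $\alpha_l>0\Rightarrow d>g+n$). The ``recomputation of the Clifford bound for the kernel of the rank-$(n-1)$ factor'' that you flag as the main obstacle is precisely the $G_L^+$ computation of Corollary \ref{flipl+}; it does not arise here, and anticipating it is a symptom of having the sub and quotient interchanged.
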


\begin{proof}

If $(E,V)\in G_L^-$, then there certainly exists a non-trivial extension (\ref{eqn:201}) with $(E_2,V_2)$ $\alpha_l^-$-stable and 
$$\mu_{\alpha_l}(E_2,V_2)=\mu_{\alpha_l }(E,V),\ \ k_2\ge n_2+1$$ 
(see (\ref{eqn:alpha})). By \cite[Theorem 1(1)]{le}, we must have $\beta(n_2,d_2,n_2+1)\ge0$ and so, 
by Remark \ref{r1}, $d_2\ge g+n_2-\left[\frac{g}{n_2+1}\right]$. By Remark \ref{rmk:200}, it follows that
$$n_2=n-1,\ \ k_2=n,\ \ d_2= g+n-1-\left[\frac{g}{n}\right].$$
Hence all the conditions of (\ref{ev}) hold.
 Now note that $N_1=0$
and $C_{21}>0$ as shown in the proof of Theorem \ref{t2a}.
The proposition follows from Remark \ref{rmk100}.
\end{proof}

\begin{remark}\label{re6}\begin{em} Taking $\alpha=\alpha_l$ in the
proof of Proposition \ref{pe3} gives a slightly
different description of $G_L^-$, namely
$$G_L^-=\{(E,V)\,|\,(E,V)\ {\rm generically\ generated,}\
E'\cong\mathcal{O}\oplus G, H^0(G^*)=0,G\
{\rm saturated\ in}\ E\}.$$
It is easy to see that these two
descriptions are equivalent.
\end{em}\end{remark}

\begin{theorem}\label{theoflip} Suppose $\alpha_l>0$. Then $G_{L-1}$ is
non-empty and irreducible, and is birational to $G_L$.
\end{theorem}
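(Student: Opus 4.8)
The plan is to use the ``flip'' formalism together with the explicit descriptions of $G_L^+$ and $G_L^-$ already established. By Corollary \ref{flipl+}, $\alpha_l$ is the top critical value $\alpha_L$, so $G_{L-1}$ is the moduli space $G_{i-1}$ immediately below this critical value, and the fundamental flip identity (\ref{eqn:iso}) gives $G_L\setminus G_L^+=G_{L-1}\setminus G_L^-$. Thus $G_L$ and $G_{L-1}$ contain a common open subset, namely the complement of the flip loci. To deduce birationality it therefore suffices to show that both $G_L^+$ and $G_L^-$ have codimension at least $1$ in their respective ambient spaces, and that the common open subset is dense in each. The dimension estimates are already in hand: Corollary \ref{flipl+} shows $\dim G_L^+\le\beta-1$ and Proposition \ref{flipl-} shows $\dim G_L^-\le\beta-1$, while $G_L$ has dimension $\beta$ by Theorem \ref{t1}(4).

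First I would record that $G_L$ is irreducible of dimension $\beta$ (Theorem \ref{t1}(4)); since $G_L^+\subset G_L$ has dimension $\le\beta-1$, its complement $G_L\setminus G_L^+$ is a non-empty open dense subset of the irreducible variety $G_L$. By (\ref{eqn:iso}) this same set equals $G_{L-1}\setminus G_L^-$, which is therefore a non-empty open subset of $G_{L-1}$. This immediately shows $G_{L-1}$ is non-empty. To establish irreducibility of $G_{L-1}$, I would argue that every irreducible component of $G_{L-1}$ has dimension $\ge\beta$ by Proposition \ref{e} (recall $\beta=\beta(n,d,n+1)$), whereas $G_L^-$ has dimension $\le\beta-1$; hence no component of $G_{L-1}$ can be contained in $G_L^-$, so each component meets the open set $G_{L-1}\setminus G_L^-$. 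Since that open set is isomorphic to the \emph{irreducible} variety $G_L\setminus G_L^+$, it is itself irreducible, and an irreducible open subset meeting every component forces $G_{L-1}$ to have a single component, i.e. to be irreducible.

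Finally, birationality follows formally: the isomorphism (\ref{eqn:iso}) exhibits an isomorphism between the dense open subset $G_L\setminus G_L^+$ of $G_L$ and the dense open subset $G_{L-1}\setminus G_L^-$ of $G_{L-1}$, which is precisely a birational map between the two irreducible varieties.

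The main obstacle will be the density/denseness bookkeeping rather than any new geometric input: one must be careful that $G_L^-$ really has codimension $\ge1$ in \emph{every} component of $G_{L-1}$, and the clean way to guarantee this is the lower bound $\dim\ge\beta$ from Proposition \ref{e} applied componentwise, combined with the strict inequality $\dim G_L^-\le\beta-1$ from Proposition \ref{flipl-}. A secondary point to verify is that $G_{L-1}$ is genuinely non-empty as a moduli space (not just that the open set is non-empty), but this is immediate once $G_L\setminus G_L^+\ne\emptyset$ is identified with an open subset of $G_{L-1}$ via (\ref{eqn:iso}).
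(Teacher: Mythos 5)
Your proposal is correct and follows essentially the same route as the paper, whose entire proof is the one-line citation of Corollary \ref{flipl+}, Proposition \ref{flipl-} and (\ref{eqn:iso}); you have simply made explicit the bookkeeping (irreducibility and dimension of $G_L$ from Theorem \ref{t1}(4), the componentwise lower bound from Proposition \ref{e}) that the authors leave implicit. Note also that $\alpha_l>0$ forces $d>g+n$ and hence $\beta>0$, so Theorem \ref{t1}(4) applies as you use it.
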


\begin{proof} This follows from Corollary \ref{flipl+}, Proposition \ref{flipl-} and (\ref{eqn:iso}).
\end{proof}

\section{Moduli spaces for any $\alpha $}\label{anyalpha}

As we have seen (see Theorems \ref{t1} and \ref{theoflip}), for
$\beta(n,d,n+1)\ge 0$ and $\alpha > \alpha_{L-1}$, the moduli
space $G(\alpha ;n,d,n+1)$ is non-empty and the non-emptiness is
related with the
 existence of coherent systems $(E,V)$
such that $E$ is generically generated and $H^0(E'^*)=0.$ Our
object in this section is to try to generalise these results to
arbitrary $\alpha>0$. For $d\le g+n$, these results are largely
contained in the unpublished \cite{bu} (see also \cite{bu1}) and in \cite{le}.

We begin by recalling the results of \cite{le} which we require.
\begin{proposition}\label{prop1} \cite[Theorem 1(1)]{le}
Let $X$ be a Petri curve and $\beta<0$. Then $G(\alpha)=\emptyset$
for all $\alpha>0$.
\end{proposition}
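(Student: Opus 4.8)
Proposition \ref{prop1} asserts that on a Petri curve with Brill--Noether number $\beta=\beta(n,d,n+1)<0$, the moduli space $G(\alpha)=G(\alpha;n,d,n+1)$ is empty for every $\alpha>0$. This is quoted from \cite[Theorem 1(1)]{le}, but let me sketch how I would establish it using the tools already developed in this excerpt.

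The plan is to argue by contradiction, exploiting the dimension estimate of Proposition \ref{e} together with the structural information about generically generated coherent systems in Lemma \ref{l1}. Suppose $(E,V)\in G(\alpha)$ for some $\alpha>0$. First I would pass to the large-$\alpha$ picture: the idea is to show that the existence of \emph{any} $\alpha$-stable coherent system of type $(n,d,n+1)$ forces, via the flip analysis, the existence of a point in $G_L$, and then apply Theorem \ref{t1}(1), which states that $G_L\neq\emptyset$ if and only if $\beta\ge0$. Concretely, as $\alpha$ increases through the finite set of critical values $\alpha_0<\alpha_1<\cdots<\alpha_L$, the relation (\ref{eqn:iso}), $G_i-G_i^+=G_{i-1}-G_i^-$, links consecutive moduli spaces. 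The subtlety is that a nonempty $G_i$ need not yield a nonempty $G_{i+1}$, since the whole of $G_i$ could lie in the flip locus $G_i^+$. So the clean route is not the flip chain but rather the dimension bound.

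The more robust approach I would actually carry out uses Proposition \ref{e} directly. Every irreducible component of $G(\alpha;n,d,n+1)$ has dimension $\ge\beta(n,d,n+1)$. If $\beta<0$, this lower bound is negative, which by itself does not force emptiness (dimension of a nonempty variety is $\ge0$); so I need the sharper statement that when $G(\alpha)\neq\emptyset$ the Petri map (\ref{pet1}) controls the geometry. The key is that on a Petri curve the relevant Petri-type injectivity propagates: for a coherent system with $V$ of dimension $n+1$ inducing a generating subsheaf $E'$ of rank $n$, one reduces (as in the proof of Lemma \ref{l1}, via the sequence (\ref{eqi2}) with ${\mathcal L}=\det E'$) to the \emph{line bundle} Petri map (\ref{pet2}), which is injective by the Petri hypothesis (Definition \ref{def2}). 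This injectivity forces $h^0({\mathcal L})\le$ the expected value, giving the Brill--Noether inequality $\deg E'\ge \frac{ng}{n+1}+n$, equivalently $\beta\ge0$ by Remark \ref{r1}. Thus the main work is to show that an $\alpha$-stable $(E,V)$ with $k=n+1$ must be generically generated with $H^0(E'^*)=0$, so that Lemma \ref{l1} applies and yields $\beta\ge0$, contradicting $\beta<0$.

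The hard part, and the reason the full argument in \cite{le} is needed rather than a one-line deduction, is precisely establishing that $\alpha$-stability for \emph{small} $\alpha$ still forces the generic-generation and $H^0(E'^*)=0$ conditions; Proposition \ref{pe3} gives this only for $\alpha>\max\{0,\alpha_l\}$, and for small $\alpha$ the subsheaf $E'$ of smaller rank can destabilise. The resolution is to analyse the maximal generated subsystem: write $(E',V)\cong(\mathcal{O}^s,H^0(\mathcal{O}^s))\oplus(G,W)$ with $H^0(G^*)=0$ and $(G,W)$ generated of rank $r\ge1$, exactly as at the start of the proof of Proposition \ref{pe3}. If $r=n$ one concludes as above. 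If $r<n$, one must show that the generated piece $(G,W)$, which satisfies the hypotheses of Lemma \ref{l1}, already yields the Brill--Noether bound on its own rank and degree, and then combine this with $\alpha$-stability (using $\alpha>0$) to derive $\beta\ge0$; this is the delicate case requiring the quotient-system argument, and I would expect it to be the genuine obstacle. Since all of this is carried out in \cite{le}, it suffices here to cite \cite[Theorem 1(1)]{le} as the authors do.
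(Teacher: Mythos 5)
The paper offers no proof of this proposition: it is quoted verbatim from \cite[Theorem 1(1)]{le}, and your proposal ultimately does the same, so the two agree. Your exploratory sketch is sound as far as it goes and correctly pinpoints the real obstacle (for small $\alpha$ stability does not force generic generation with $H^0(E'^*)=0$, so Lemma \ref{l1} cannot be invoked directly and one must analyse the generated piece $(G,W)$ of $E'$ as in the proof of Proposition \ref{pe3}), while honestly declining to close that gap and deferring to the citation.
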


Before proceeding further, we define
$$U(n,d,n+1):=\{(E,V)\in G_L: E \mbox{ is stable}\}$$
and
$$U^s(n,d,n+1):=\{(E,V): (E,V) \mbox{ is $\alpha$-stable for
all } \alpha>0\}.$$
Note that $U(n,d,n+1)$ can be defined alternatively as
$$U(n,d,n+1):=\{(E,V): E \mbox{ is stable and } (E,V) \mbox{ is $\alpha$-stable for
all } \alpha>0\}$$ and in particular $U(n,d,n+1)\subset U^s(n,d,n+1)$. In the
converse direction, note that, if $(E,V)\in U^s(n,d,n+1)$, then $E$ is
semistable. However it is not generally true that $U(n,d,n+1)=U^s(n,d,n+1)$ and we
can have $U^s(n,d,n+1)\ne\emptyset$, $U(n,d,n+1)=\emptyset$. Our main object in the
remainder of the paper is to determine when these sets are
non-empty.

\begin{remark}\label{re9}\begin{em} By openness of $\alpha$-stability, $U(n,d,n+1)$ and $U^s(n,d,n+1)$ are open subsets of $G_L$,
thus inheriting natural structures of smooth variety, and with
these same structures they are  also embedded as open subsets of every
$G(\alpha)$.
If either $U(n,d,n+1)$ or $U^s(n,d,n+1)$ is non-empty,
then, by Theorem \ref{t1}, it is irreducible of
dimension $\beta$ (or finite when $\beta=0$) and its generic element $(E,V)$ is generated
with $H^0(E^*)=0$. \end{em}\end{remark}

\begin{proposition}\label{lem1} \cite[Proposition 2.5(4)]{le}
Let $(E,V)$ be a generated coherent system of type $(n,d,n+1)$
such that $E$ is semistable. Then $(E,V)\in U^s(n,d,n+1)$.
\end{proposition}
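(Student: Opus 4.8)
The plan is to check the defining inequality of $\alpha$-stability (Definition \ref{def1}) directly for an arbitrary proper coherent subsystem $(F,W)\subset(E,V)$, of type $(n_1,d_1,k_1)$ with $n_1\ge1$, and for every $\alpha>0$. Writing the difference of $\alpha$-slopes as
$$\mu_\alpha(F,W)-\mu_\alpha(E,V)=\left(\frac{d_1}{n_1}-\frac{d}{n}\right)+\alpha\left(\frac{k_1}{n_1}-\frac{n+1}{n}\right),$$
the first bracket is $\le0$ because $E$ is semistable and $F$ is a subbundle, so the whole task reduces to controlling $k_1$. First I would record that $d>0$: since $(E,V)$ is generated we have $h^0(E)\ge n+1>n=\operatorname{rk}E$, which is incompatible with $E$ being semistable of slope $\le0$.

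The technical heart is the bound $k_1\le n_1+1$. Because $(E,V)$ is generated, the evaluation map gives an exact sequence $0\to N\to V\otimes\mathcal{O}\to E\to0$ with $N$ a line bundle of degree $-d$. Given $(F,W)$, let $F'\subset F$ be the subsheaf generated by $W$, so that there is an exact sequence $0\to N_W\to W\otimes\mathcal{O}\to F'\to0$, where $N_W=\ker(W\otimes\mathcal{O}\to E)=(W\otimes\mathcal{O})\cap N$. Thus $N_W$ is a subsheaf of the line bundle $N$ and so has rank $\le1$. Comparing ranks gives $\operatorname{rk}F'=k_1-\operatorname{rk}N_W\ge k_1-1$, and since $F'\subset F$ forces $\operatorname{rk}F'\le n_1$, we obtain $k_1\le n_1+1$.

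With this in hand I would split into cases. If $n_1=n$, then a proper subsystem has $W\subsetneq V$, and in fact $k_1\le n$ (if $W=V$ then $F'=E$ by generation, forcing $F=E$); hence the $\alpha$-term $\alpha(k_1/n-(n+1)/n)$ is strictly negative. If $n_1<n$ and $k_1\le n_1$, then $k_1/n_1\le1<(n+1)/n$, so the $\alpha$-term is again strictly negative while the degree term is $\le0$; therefore $\mu_\alpha(F,W)<\mu_\alpha(E,V)$. The remaining possibility, $n_1<n$ with $k_1=n_1+1$, is the step I expect to be the main obstacle, since there $k_1/n_1>(n+1)/n$ and the $\alpha$-term would threaten stability. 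Here, however, $\operatorname{rk}N_W=1$, so $\deg N_W\le\deg N=-d$ and hence $\deg F'=-\deg N_W\ge d$; as $F'\subset F$ has full rank $n_1$, this gives $d_1\ge\deg F'\ge d$. But semistability yields $d_1\le n_1d/n<d$ (using $n_1<n$ and $d>0$), a contradiction, so this case does not occur. Combining the cases shows that $(E,V)$ is $\alpha$-stable for every $\alpha>0$, i.e. $(E,V)\in U^s(n,d,n+1)$.
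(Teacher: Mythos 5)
Your argument is correct and complete. Note that the paper itself supplies no proof of this proposition --- it is quoted verbatim from \cite[Proposition 2.5(4)]{le} --- so your write-up functions as a self-contained verification rather than a variant of an argument in the text; it is, however, essentially the standard proof of this fact. The two load-bearing steps are both sound: (i) the bound $k_1\le n_1+1$ follows exactly as you say, because the kernel $N$ of $V\otimes\mathcal{O}\to E$ is a line bundle of degree $-d$ (this is where the hypothesis $k=n+1$ and generatedness enter), so $N_W=(W\otimes\mathcal{O})\cap N$ has rank at most $1$; and (ii) in the extremal case $n_1<n$, $k_1=n_1+1$, forcing $\operatorname{rk}N_W=1$ gives $\deg F'=-\deg N_W\ge d$, hence $d_1\ge d$, which contradicts $d_1\le n_1d/n<d$ from semistability once one knows $d>0$ (and your observation that $h^0(E)\ge n+1$ rules out semistable $E$ of non-positive degree is the right way to get that). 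The remaining cases ($n_1=n$ with $W\subsetneq V$, and $k_1\le n_1$) are handled correctly, so $(E,V)$ is $\alpha$-stable for every $\alpha>0$.
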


\begin{proposition}\label{th1} \cite[Proposition 4.1(2)]{le} Let $X$ be a Petri curve and suppose
that $g+n-\left[\frac{g}{n+1}\right]\le d\le g+n$ and that $g$ and
$n$ are not both equal to $2$. Then $U(n,d,n+1)$ is non-empty.
\end{proposition}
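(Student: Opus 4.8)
The plan is to produce a single generated coherent system $(E,V)$ of type $(n,d,n+1)$ whose underlying bundle $E$ is \emph{stable}. Indeed, a generated system automatically has $E'=E$, so if moreover $H^0(E^*)=0$ then $(E,V)\in G_L$ by Theorem~\ref{t1}(3), and such a system lies in $U(n,d,n+1)$ precisely when $E$ is stable. The lower bound $g+n-\left[\frac{g}{n+1}\right]\le d$ is exactly the condition $\beta\ge0$ (Remark~\ref{r1}), so Proposition~\ref{pe2} already furnishes generated systems in $G_L$; the entire content is thus to arrange stability of $E$.

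First I would reduce stability of $E$ to a numerical comparison. Take a generated $(E,V)\in G_L$ and any proper quotient bundle $E\to E_2$ of type $(n_2,d_2)$, and set $V_2$ to be the image of $V$ in $H^0(E_2)$; then $(E_2,V_2)$ is a quotient coherent system, so Lemma~\ref{l1} gives $d_2\ge g+n_2-\left[\frac{g}{n_2+1}\right]$. Feeding this together with the hypothesis $d\le g+n$ into the slope comparison of Remark~\ref{rmk:200} (now applied to the bundle slope $\mu=d/n$ rather than the $\alpha$-slope) shows $\mu(E_2)>\mu(E)$ for every $n_2$ with $0<n_2<n$, \emph{unless} $d=g+n$, $n\mid g$ and $n_2=n-1$. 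Consequently, provided we are not in the exceptional situation $d=g+n$ with $n\mid g$, every generated system in $G_L$ has $E$ stable, and the proposition follows at once from Proposition~\ref{pe2}.

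It then remains to treat $d=g+n$ with $n\mid g$, where $\beta=g>0$, so that $G_L$ is irreducible of dimension $\beta$ and its generic element is generated (Theorem~\ref{t1}(4)). In this case the computation above shows $E$ is always semistable and can fail to be stable only through a quotient of equal slope, which forces $n_2=n-1$, $d_2=g+n-1-\frac{g}{n}$ and $k_2\in\{n,n+1\}$ (equivalently, $E$ carries a line subbundle of degree $1+\frac{g}{n}=\mu(E)$). Such systems arise as non-trivial extensions $0\to(E_1,V_1)\to(E,V)\to(E_2,V_2)\to0$, and the plan is to bound this family exactly as in the proof of Corollary~\ref{flipl+}: since $(E,V)$ is $\alpha$-stable we have $\dim\mathbb{H}^0_{21}=0$ by Remark~\ref{rmk100}, and it suffices to verify $C_{12}-h^0(E_1^*\otimes N_2\otimes K)>0$, whence the destabilising locus has dimension $\le\beta-1$ and the generic element of $G_L$ is a generated system with $E$ stable.

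The hard part will be precisely this last parameter count, and it is here that the excluded case $(g,n)=(2,2)$ intervenes. With $d=g+n$ one finds $C_{12}=(n-1)\left(1+\left[\frac{g}{n}\right]\right)-n$, which vanishes exactly when $(g,n)=(2,2)$; there the destabilising family fills all of $G_L$, giving $U(2,4,3)=\emptyset$. For $(g,n)\ne(2,2)$ one has $C_{12}>0$, and the remaining task is to control the line bundle $E_1^*\otimes N_2\otimes K$, of degree $2g-2-d=g-n-2$, by Clifford's theorem in the same way as in Corollary~\ref{flipl+}, so as to keep $C_{12}-h^0(E_1^*\otimes N_2\otimes K)$ strictly positive. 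Checking this inequality uniformly over the admissible ranges of $g$ and $n$ is the only genuinely delicate step.
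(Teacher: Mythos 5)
First, note that the paper does not actually prove this proposition: it is quoted from \cite[Proposition 4.1(2)]{le}, where the argument goes through Butler's theorem on the stability of the dual span bundle $M_{H^0({\mathcal L}),{\mathcal L}}$ for a general line bundle ${\mathcal L}$ with exactly $n+1$ sections (cf.\ Proposition \ref{thm100}). Your route is therefore genuinely different, and its first half is correct and clean: for a generated $(E,V)\in G_L$ with $H^0(E^*)=0$, every quotient bundle $E_2$ carries a generated quotient system, Lemma \ref{l1} gives $d_2\ge g+n_2-\left[\frac{g}{n_2+1}\right]$, and combining this with $d\le g+n$ and Lemma \ref{l2} yields $\mu(E_2)\ge\mu(E)$, with equality only if $d=g+n$, $n\mid g$ and $n_2=n-1$. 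This recovers \cite[Proposition 3.5]{le} and, together with Proposition \ref{pe2}, settles every case except $d=g+n$ with $n\mid g$.

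In that remaining case your parameter count has two concrete gaps. First, you allow $k_2\in\{n,n+1\}$, but your formula $C_{12}=(n-1)\left(1+\left[\frac{g}{n}\right]\right)-n$ is computed only for $k_1=1$, $k_2=n$. When $k_1=0$ the kernel $N_2$ has rank $2$, so ${\mathbb H}^2_{21}$ involves $h^0$ of a rank-$2$ bundle rather than a line bundle, and $(E_2,V_2)$ has type $(n-1,d_2,n+1)$, so Theorem \ref{t1} no longer bounds its family by $\beta(n_2,d_2,k_2)$; the paper needs a separate and rather delicate non-existence argument (Proposition \ref{prop7}) for precisely this configuration, and nothing in your sketch replaces it. Second, even for $k_1=1$ the Clifford estimate "as in Corollary \ref{flipl+}" is not strict enough when $n=2$: there $C_{12}=\frac{g}{2}-1$ while $E_1^*\otimes N_2\otimes K$ has degree $g-4$, and Clifford gives only $h^0\le\frac{g}{2}-1$, so you obtain $C_{12}-h_2\ge0$ rather than $>0$. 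You would have to invoke non-hyperellipticity (which does follow from the Petri hypothesis for $g\ge3$) to sharpen the bound, and the borderline situation $g=4$, $d=6$, where the line bundle is $(\det E)^*\otimes K$ of degree $0$ and may be trivial, still needs separate treatment. These are exactly the points you defer as "delicate", so as written the argument is incomplete in the one subcase that carries all the difficulty.
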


\begin{proposition}\label{pt1} \cite[Proposition 4.6]{le} Let $X$ be a Petri curve
and  $\beta\geq 0.$ If $g\geq n^2-1$, then $U(n,d,n+1)\ne\emptyset$.
\end{proposition}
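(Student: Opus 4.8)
The plan is to combine the known non-emptiness of $U(n,d,n+1)$ on a range of degrees (Proposition \ref{th1}) with a degree-increasing propagation argument, the hypothesis $g\ge n^2-1$ being used precisely to guarantee that the range supplied by Proposition \ref{th1} is long enough to trigger that propagation.

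First I would dispose of the trivial and excluded cases. For $n=1$ the statement reduces to classical Brill--Noether theory: every line bundle is stable and a general element of $G(1,d,2)=G^1_d$ is generated once $\beta\ge0$, so $U(1,d,2)\ne\emptyset\Leftrightarrow\beta\ge0$ and there is nothing to prove. Hence I may assume $n\ge2$. Under $n\ge2$ and $g\ge n^2-1$ one has $g\ge3$ when $n=2$ and $g\ge8$ when $n\ge3$; in either case $(g,n)\ne(2,2)$, so the exclusion in Proposition \ref{th1} never occurs.

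Next, Proposition \ref{th1} gives $U(n,d,n+1)\ne\emptyset$ for every integer $d$ in the interval $[a,b]$ with $a=g+n-\left[\frac{g}{n+1}\right]$ and $b=g+n$, so that $b-a=\left[\frac{g}{n+1}\right]$. The key numerical observation is that $g\ge n^2-1=(n-1)(n+1)$ forces $\frac{g}{n+1}\ge n-1$, and therefore $\left[\frac{g}{n+1}\right]\ge n-1$; thus $b-a\ge n-1$. I would then propagate upward using the method of Remark \ref{bint}. Finally, by Remark \ref{r1} the resulting range $d\ge a=g+n-\left[\frac{g}{n+1}\right]$ is exactly the range $\beta\ge0$, which completes the proof.

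The one point requiring genuine care — and the step I expect to be the main obstacle — is that Remark \ref{bint}, which is stated for the full moduli spaces $G(\alpha)$, must be upgraded to the open subset $U(n,d,n+1)$. This follows from Remark \ref{R8}: if $(E,V)\in U(n,d,n+1)$ and $\mathcal{L}$ is an effective line bundle, then $E\otimes\mathcal{L}$ is again stable by Remark \ref{R8}(1), while $(E\otimes\mathcal{L},\widetilde V)$ remains $\alpha$-stable for large $\alpha$ by Remark \ref{R8}(2) and hence lies in $G_L(n,d+\deg\mathcal{L},n+1)$; consequently it lies in $U(n,d+\deg\mathcal{L},n+1)$. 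With this verification in place, the argument of Remark \ref{bint} applies verbatim to $U$, and since $b-a\ge n-1$ it yields $U(n,d,n+1)\ne\emptyset$ for all $d\ge a$. Everything else in the argument is elementary arithmetic and a direct appeal to Proposition \ref{th1}.
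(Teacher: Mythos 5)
Your argument is correct. Note that the paper itself offers no proof of this proposition --- it is quoted verbatim from \cite[Proposition 4.6]{le} --- but your derivation from Proposition \ref{th1}, Remarks \ref{R8}, \ref{bint} and \ref{r1} is a valid self-contained reconstruction, and it runs on exactly the mechanism underlying the cited result: the hypothesis $g\ge n^2-1=(n-1)(n+1)$ gives $\left[\frac{g}{n+1}\right]\ge n-1$, so the interval $\left[g+n-\left[\frac{g}{n+1}\right],\,g+n\right]$ supplied by Proposition \ref{th1} contains $n$ consecutive degrees and the tensoring trick propagates non-emptiness to all $d$ with $\beta\ge0$. Your identification of the one point needing care --- that Remark \ref{bint} must be upgraded from $G(\alpha)$ to $U(n,d,n+1)$, which Remark \ref{R8} handles since both stability of $E$ and $\alpha$-stability of $(E,V)$ are preserved under tensoring --- is exactly right. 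One small slip: tensoring $E$ by an effective line bundle $\mathcal L$ raises the degree by $n\deg\mathcal L$, not by $\deg\mathcal L$, so the resulting pair lies in $U(n,d+n\deg\mathcal L,n+1)$; this is precisely why the interval must have length $\ge n-1$, as your own numerical step correctly anticipates, so the error is purely notational and does not affect the proof.
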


In the remainder of this section, we shall introduce two further
techniques for constructing coherent systems. The first is that of
elementary transformations, which we shall use in two distinct
ways.

Since any stable bundle of degree $\ge n(2g-1)$ is generated by
its sections, Proposition \ref{lem1} implies that $U(n,d,n+1)\ne\emptyset$
for $d\ge n(2g-1)$ (see also \cite[Proposition 2.6]{le}). The next proposition provides a significant
improvement on this.

\begin{proposition}\label{prop2}
Let $X$ be a Petri curve. If
$$d_0=\left\{\begin{array}{ll}\frac{n(g+3)}2&\mbox{if $g$ is
odd}\\\frac{n(g+2)}2&\mbox{if $g$ is even,}\end{array}\right.$$
then $U^s(n,d_0,n+1)\ne\emptyset$. 

If $d\ge d_1$, where
$$d_1=\left\{\begin{array}{ll}\frac{n(g+3)}2+1&\mbox{if $g$ is
odd}\\\frac{n(g+2)}2+1&\mbox{if $g$ is even and }n\le\frac{g!}{(\frac{g}2)!(\frac{g}2+1)!}
\\\frac{n(g+4)}2+1&\mbox{if $g$ is even and }n>\frac{g!}{(\frac{g}2)!(\frac{g}2+1)!},
\end{array}\right.$$
then $U(n,d,n+1)\ne\emptyset$.
\end{proposition}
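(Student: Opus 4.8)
The plan is to build the required coherent systems by combining the stability criterion of Proposition~\ref{lem1} with classical Brill--Noether theory on a Petri curve, and then to propagate the result upward in degree using Remark~\ref{bint}. The key observation is that Proposition~\ref{lem1} reduces the entire problem to producing a \emph{generated} coherent system $(E,V)$ of type $(n,d,n+1)$ whose underlying bundle $E$ is semistable (for the $U^s$ statement) or stable (for the $U$ statement). So the strategy is: first construct a generated line bundle $\mathcal{L}$ with an $(n+1)$-dimensional generating subspace $W\subset H^0(\mathcal{L})$, then dualise the evaluation sequence
\begin{equation}\label{eqn:dualeval}
0\longrightarrow E^*\longrightarrow W\otimes\mathcal{O}\longrightarrow\mathcal{L}\longrightarrow0
\end{equation}
to obtain a bundle $E$ of rank $n$, degree $d=\deg\mathcal{L}$, with $(E,W^*)$ generated (this is exactly the construction in the proof of Proposition~\ref{pe2}). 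The whole difficulty is then concentrated in showing that $E$ is semistable, respectively stable, for the specific degrees $d_0$ and $d_1$.

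First I would treat the base degree $d_0$ and the $U^s$ statement. Here $d_0$ is chosen so that $\beta(1,d_0,n+1)\ge0$ (one checks from the formula $\beta(1,d,n+1)=g-(n+1)(n-d+g)$ that the stated values of $d_0$ make $\mathcal{L}$ of degree roughly $g/2$ per generator, comfortably in the Brill--Noether range), so a general generated $(\mathcal{L},W)$ with $\dim W=n+1$ exists by classical theory. To prove that the dual bundle $E=M_{W,\mathcal{L}}^*$ from \eqref{eqn:dualeval} is semistable, I would argue that $M_{W,\mathcal{L}}$ is semistable and use that $\deg\mathcal{L}$ is exactly right so that the slope of $M_{W,\mathcal{L}}$ lands at a value where any destabilising subsheaf would force, via the injectivity of the multiplication/Petri map, a relation contradicting the generality of $(\mathcal{L},W)$. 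Concretely, a destabilising subbundle $F\subset E$ gives a quotient $E\twoheadrightarrow E/F$ and hence a subspace of sections violating the Petri-type injectivity on the Petri curve; the numerical threshold $d_0\approx n(g+2)/2$ or $n(g+3)/2$ is precisely the borderline value of $\mu(E)=d_0/n\approx(g+2)/2$ at which semistability is forced but stability may fail. The main obstacle will be this stability estimate for $M_{W,\mathcal{L}}$: one must control the sections of every quotient line bundle of $\mathcal{L}$ restricted to the relevant sub-line-bundles, and this is where the Petri hypothesis does the real work.

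Next I would handle the stability (the $U$ statement) at $d_1$. The reason $d_1=d_0+1$ rather than $d_0$ is that at the borderline degree $d_0$ one can only guarantee semistability: strict stability of $E$ fails exactly when there is a destabilising subbundle of equal slope, which happens when the relevant Brill--Noether locus of sub-line-bundles is non-empty. Raising the degree by one breaks the equality of slopes and upgrades semistability to stability, \emph{provided} the exceptional locus is avoided; this is the content of the case distinction on $n$ versus $\frac{g!}{(g/2)!(g/2+1)!}$ when $g$ is even. That binomial-type quantity is the cardinality of the finite Brill--Noether set $B(1,\cdot,\cdot)$ appearing in Theorem~\ref{t1}(5) at the critical degree, and it governs whether a general choice of $(\mathcal{L},W)$ can be made to avoid equal-slope sub-line-bundles; when $n$ exceeds it there are too few independent general line bundles available, so one must go up a further step to $\frac{n(g+4)}2+1$. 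Finally, to pass from these single degrees $d_0$, $d_1$ to \emph{all} $d\ge d_1$, I would invoke Remark~\ref{bint}: tensoring by an effective line bundle of degree one (Remark~\ref{R8}) preserves generatedness, (semi)stability of $E$, and $\alpha$-(semi)stability of $(E,V)$, so non-emptiness at $n$ consecutive degrees propagates to all larger degrees — though here, since we have isolated single degrees, the cleaner route is to tensor by line bundles of increasing degree and reapply Proposition~\ref{lem1} directly. I expect the stability estimate underlying the $d_0$ case to be the crux; once semistability at the borderline is established, the stability at $d_1$ and the propagation to large $d$ are comparatively formal.
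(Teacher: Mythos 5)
Your proposal takes a genuinely different route from the paper, but it has a fatal gap at its core. You reduce everything to proving that the dual span bundle $M_{W,\mathcal{L}}$ of a general generated $(\mathcal{L},W)$ of type $(1,d,n+1)$ is (semi)stable, and then say "the main obstacle will be this stability estimate." That obstacle is not a technical step to be filled in later: by Proposition \ref{thm100}, the existence of a generated $({\mathcal L},V)$ of type $(1,d,n+1)$ with $M_{V,{\mathcal L}}$ stable is \emph{equivalent} to $U(n,d,n+1)\ne\emptyset$, which is the statement you are trying to prove. Indeed this is Butler's conjecture (Conjecture \ref{conj}) for these degrees, and the paper explicitly proves Proposition \ref{prop2} by other means precisely in order to \emph{deduce} new cases of that conjecture. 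Your appeal to "Petri-type injectivity forcing a contradiction" is not an argument, and no known argument of that shape covers these degrees. The paper instead works with $E={\mathcal L}_1\oplus\cdots\oplus{\mathcal L}_n$, a direct sum of $n$ line bundles of equal degree $d_0/n$ with $h^0\ge2$, generated, together with a generating $(n+1)$-dimensional $V$: semistability is then automatic and Proposition \ref{lem1} gives the $U^s$ statement. For the $U$ statement it takes the ${\mathcal L}_i$ pairwise non-isomorphic of degree $(d_1-1)/n$ and performs an elementary transformation $0\to\oplus{\mathcal L}_i\to E\to\tau\to0$ of arbitrary length $t\ge1$, invoking Mercat's theorem to get $E$ stable; $\alpha$-stability of $(E,V)$ then follows by comparing subsystems with those of $(\oplus{\mathcal L}_i,V)$.

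Two further concrete errors. First, your numerical reading of $d_0$ is wrong: the relevant condition is $\beta(1,d_0/n,2)\ge0$ (the minimal degree of a line bundle with two sections, taken $n$ times), not $\beta(1,d_0,n+1)\ge0$; the latter threshold is roughly $\frac{ng}{n+1}+n$, far below $\frac{n(g+2)}{2}$. Relatedly, the quantity $\frac{g!}{(g/2)!(g/2+1)!}$ enters because when $g$ is even and $\beta(1,\frac{g+2}{2},2)=0$ it counts the finitely many line bundles of that degree with $h^0\ge2$, and one needs $n$ pairwise distinct ones for the elementary-transformation/Mercat argument -- not because of "equal-slope sub-line-bundles." Second, your proposed propagation from the single degree $d_1$ to all $d\ge d_1$ by tensoring with line bundles fails: tensoring raises the degree by multiples of $n$ (Remark \ref{R8}), so you only reach degrees congruent to $d_1$ modulo $n$, and Remark \ref{bint} requires non-emptiness at $n$ consecutive degrees before it applies. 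The paper avoids this entirely because the torsion length $t$ in the elementary transformation can be any positive integer, giving all $d\ge d_1$ at once.
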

\begin{proof}
It is easy to check that, with the above definition of $d_0$,
$\beta(1,\frac{d_0}n,2)\ge0$ (in fact, $\frac{d_0}n$ is the smallest
integer for which this is true). Hence, by classical Brill-Noether
theory, there exists a line bundle ${\mathcal L}$ of degree
$\frac{d_0}n$ such that $h^0({\mathcal L})\ge2$ and ${\mathcal L}$ is generated by its sections.
Now let ${\mathcal L}_1,\ldots,{\mathcal L}_n$ be any such line bundles and let $V$ be a subspace of
$H^0({\mathcal L}_1\oplus\ldots\oplus {\mathcal L}_n)$ of dimension $n+1$ such that
$({\mathcal L}_1\oplus\ldots\oplus {\mathcal L}_n,V)$ is generated. Hence
$({\mathcal L}_1\oplus\ldots\oplus {\mathcal L}_n,V)\in U^s(n,d_0,n+1)$ by Proposition \ref{lem1}.

Again by classical Brill-Noether theory, one can find pairwise non-isomorphic line bundles ${\mathcal L}_1,\ldots,{\mathcal L}_n$ of degree $\frac{d_1-1}n$ such that, for all $i$, $h^0({\mathcal L}_i)\ge2$ and ${\mathcal L}_i$ is generated by its sections (in the case $g$ even and $d_1=\frac{n(g+2)}2+1$, the number of distinct line bundles of degree $\frac{d_1-1}n$ with 
$h^0\ge2$ is $\frac{g!}{(\frac{g}2)!(\frac{g}2+1)!}$ \cite[Chapter V, formula (1.2)]{arb}).
Now consider extensions
$$0\rightarrow {\mathcal L}_1\oplus\ldots\oplus {\mathcal L}_n\rightarrow
E\rightarrow\tau\rightarrow0,$$ where $\tau$ is a torsion sheaf of
length $t\ge1$. These extensions are classified by $n$-tuples
$(e_1,\ldots,e_n)$ with $e_i\in{\rm Ext}^1(\tau,{\mathcal L}_i)$. It can be
shown (see \cite[Th\'eor\`eme A.5]{mer}) that, for any $t$, there exists an
extension of this type for which $E$ is stable. Moreover $V$ can
be regarded as a subspace of $H^0(E)$, making $(E,V)$ a coherent
system. If $(E_1,V_1)$ is a proper subsystem of $(E,V)$ with $E_1\ne E$, then $V_1\subset
V\cap H^0(E_1\cap {\mathcal L}_1\oplus\ldots\oplus {\mathcal L}_n)$. It
follows from the $\alpha$-stability of $({\mathcal L}_1\oplus\ldots\oplus
{\mathcal L}_n,V)$ for large $\alpha$ that $\frac{k_1}{n_1}\le\frac{k}{n}$.  Since $E$ is stable, we have
also $\frac{d_1}{n_1}<\frac{d}n$. It follows that $(E,V)\in U(n,d,n+1)$.
\end{proof}

\begin{remark}\begin{em}\label{rmk:te}
For a general curve $X$, the second part of Proposition \ref{prop2} is valid with 
$$d_1=\left\{\begin{array}{ll}\frac{n(g+1)}2+1&\mbox{if $g$ is
odd}\\\frac{n(g+2)}2+1&\mbox{if $g$ is even}\end{array}\right.$$
by \cite{te2}. However, this does not imply the result for an arbitrary Petri curve.
\end{em}\end{remark}

Our second use of elementary transformations is to prove

\begin{proposition}\label{prop4}
Suppose that $U(n,na,n+1)\ne\emptyset$ for some integer $a$. Then
$U(n,d,n+1)\ne\emptyset$ for all $d$ with $d>na$ and
$d\equiv\pm1\bmod n$.
\end{proposition}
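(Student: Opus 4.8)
The plan is to realise the degree-$d$ bundles as general upper elementary transformations of a fixed stable bundle of degree $na$. First I would pick $(E_0,V_0)\in U(n,na,n+1)$ with $E_0$ stable; such a pair exists by hypothesis, and by Remark \ref{re9} we may even assume $(E_0,V_0)$ is generated with $H^0(E_0^*)=0$, though this will not be needed. Note that $E_0$ has integral slope $a$, and that $d\equiv\pm1\bmod n$ forces $\gcd(n,d)=1$, so for rank $n$ and degree $d$ stability coincides with semistability and no proper subbundle can have slope exactly $d/n$. Put $t:=d-na$; the hypothesis $d>na$ gives $t\ge1$.

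Next I would consider elementary transformations
$$0\longrightarrow E_0\longrightarrow E\longrightarrow\tau\longrightarrow0,$$
where $\tau$ is a torsion sheaf of length $t$ supported at $t$ general distinct points, each being a general Hecke modification in the sense of \cite{nr}. For any such $E$, the image of $V_0$ in $H^0(E)$ makes $(E,V_0)$ a coherent system of type $(n,d,n+1)$, and Lemma \ref{lem:200} (applied with $F=E_0$) shows immediately that $(E,V_0)\in G_L(n,d,n+1)$, since $(E_0,V_0)\in U(n,na,n+1)\subset G_L(n,na,n+1)$. Thus membership in $G_L$ is automatic, and the whole problem reduces to producing one such transformation for which $E$ is \emph{stable}: for then $(E,V_0)\in U(n,d,n+1)$, as required.

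The remaining, and main, point is therefore the stability of the general modification $E$. Here I would argue by a parameter count. If a saturated subbundle $F\subset E$ of rank $r$ with $0<r<n$ destabilises $E$, then, writing $F_0=F\cap E_0$ and using the stability of $E_0$ (so that the saturation of $F_0$ in $E_0$ has degree $\le ra-1$), one finds that $F$ must absorb a length $\mathrm{length}(F/F_0)>\frac{rt}{n}+1$ of the torsion quotient $\tau$. For $t=1$ this already exceeds $t$, so $E$ is automatically stable; for general $t$ one must show that the locus of modifications for which some subbundle absorbs more than its fair share of $\tau$ has strictly positive codimension in the space parametrising these modifications, so that the general $E$ has no destabilising subbundle. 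The strictness of the inequality $\mu(F)>\mu(E)$, hence the positivity of this codimension, is exactly where $\gcd(n,d)=1$, i.e. the hypothesis $d\equiv\pm1\bmod n$, enters; alternatively, twisting by a line bundle via Remark \ref{R8} reduces everything modulo $n$ to the cases $t=1$ and $t=n-1$. This genericity statement is of the type established in \cite[Th\'eor\`eme A.5]{mer}, from which it could instead be deduced.

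I expect this last dimension count to be the main obstacle: one must verify, uniformly over the bounded family of pairs $F_0\subset E_0$ that could arise, that the subbundles absorbing a disproportionate part of $\tau$ sweep out a family of dimension strictly smaller than that of the full space of modifications. Once this is in place, the conclusion $U(n,d,n+1)\ne\emptyset$ for all $d>na$ with $d\equiv\pm1\bmod n$ is immediate.
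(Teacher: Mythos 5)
Your overall framework (elementary transformations of a fixed $(E_0,V_0)\in U(n,na,n+1)$, with Lemma \ref{lem:200} handling membership in $G_L$) is the same as the paper's, but your proof has a genuine gap at exactly the point you flag as ``the main obstacle'': the stability of a general length-$t$ upward modification of a fixed stable bundle is asserted, not proved. The dimension count you sketch is not routine. One must bound the dimension of the family of saturated subsheaves $F_0\subset E_0$ of each rank $r$ and each degree close to $ra$ (a Quot-scheme bound, uniform in $r$ and in the degree), and compare it with the codimension $s(n-r)$ of the condition that the modification directions at $s$ of the $t$ points lie in the fibres of $F_0$; none of this is carried out, and the citation of \cite[Th\'eor\`eme A.5]{mer} does not cover it, since that result concerns elementary transformations of a \emph{direct sum of line bundles} (it is used in Proposition \ref{prop2} for precisely that situation), not of an arbitrary stable bundle. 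A telling sign that something is being glossed over: if your genericity claim held for all $t$, you would have proved $U(n,d,n+1)\ne\emptyset$ for \emph{every} $d>na$, whereas the proposition is deliberately restricted to $d\equiv\pm1\bmod n$.

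The paper avoids the count entirely. By Remark \ref{R8} it suffices to treat $d=na+1$ and $d=na+n-1$ (you mention this reduction as an ``alternative'' but do not exploit it). For $d=na+1$ a single upward transformation $0\to F\to E\to\tau\to0$ of length $1$ works, and stability of $E$ is automatic, as you observe. The missing idea is the case $d=na+n-1$: instead of an upward transformation of length $n-1$ (which genuinely can fail to preserve stability for special modification data when $r\le n-2$, so genericity would be needed), the paper takes a single \emph{downward} Hecke modification
$$0\longrightarrow E\longrightarrow F(x)\longrightarrow\tau\longrightarrow0$$
with $\tau$ of length $1$. Then any saturated rank-$r$ subbundle of $E$ is a subsheaf of the stable bundle $F(x)$, so has degree at most $ra+r-1<r\mu(E)=ra+r-\frac{r}{n}$, and stability is again automatic. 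With both residues handled by length-one modifications in the appropriate direction, no parameter count is required. This is the step you would need to supply (or replace by a complete genericity argument) to close your proof.
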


\begin{proof}
In view of Remark \ref{R8}, it is sufficient to prove this for
$d=na+1$ and for $d=na+n-1$.

Suppose first that $d=na+1$. Let $(F,V)\in U(n,na,n+1)$ and
define $E$ as an elementary transformation (\ref{eqn2}). Then
$(E,V)\in G_L(n,na+1,n+1)$ by Lemma \ref{lem:200}. The stability of $E$
follows easily from the stability of $F$, so $(E,V)\in U(n,d,n+1)$.

Now suppose $d=na+n-1$. Again let $(F,V)\in G_{L}(n,na,n+1)$ and
let $x\in X$. Let $\tau$ be the torsion sheaf of length $1$
supported at $x$ and define $E$ as an elementary transformation
$$0\rightarrow E\rightarrow F(x)\rightarrow\tau\rightarrow0.$$
Then $F$ can be regarded as a subsheaf of $E$ and $V$ as a
subspace of $H^0(E)$. By Lemma \ref{lem:200}, the coherent system
$(E,V)\in G_L(n,na+n-1,n+1)$. The stability of $E$ follows from the stability of
$F(x)$.
\end{proof}

The second technique is the use of extensions of coherent systems. The idea is to
take a generic element $(E,V)$ of $G_L$ and try to prove that $E$
is stable. If this is not the case, there exists a quotient $E_2$
of $E$ with $\mu(E_2)\le\mu(E)$ and we can choose $E_2$ to be
stable. We have therefore an extension
$$0\rightarrow E_1\rightarrow E\rightarrow E_2\rightarrow 0,$$
and, taking $V_1=V\cap H^0(E_1)$ and $V_2=V/V_1$, we obtain an
extension of coherent systems

\begin{equation}\label{eqn2'}
0\rightarrow(E_1,V_1)\rightarrow(E,V)\rightarrow(E_2,V_2)\rightarrow0.
\end{equation}
We are assuming that $(E,V)$ is a generic element of $G_L$, so
$(E,V)$ is generated and $H^0(E^*)=0$. Using Lemma \ref{l1}, we
see that (\ref{eqn2'}) is subject to the following conditions:
\begin{itemize}
\item $\mu(E_2)\le\mu(E)$;
\item $E_2$ is stable, $(E_2,V_2)$ is generated and $k_2\ge
n_2+1$;
\item
$\mu(E_2)\ge1+\frac1{n_2}\left(g-\left[\frac{g}{n_2+1}\right]\right)$.
\end{itemize}

\begin{proposition}\label{prop5} Suppose that $X$ is a Petri curve, $n\ge3$, $d<g+n+\frac{g}{n-1}$
and  $n_2\le n-2$. Then no extension (\ref{eqn2'}) exists
satisfying the stated conditions.
\end{proposition}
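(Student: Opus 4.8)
The plan is to derive a numerical contradiction from the three bulleted conditions attached to the extension (\ref{eqn2'}). The key observation is that the hypothesis $(E_2,V_2)$ is generated together with $k_2\ge n_2+1$ and $E_2$ stable forces $\mu(E_2)$ to be large, via the lower bound in the third bullet, namely $\mu(E_2)\ge 1+\frac1{n_2}\bigl(g-\bigl[\frac{g}{n_2+1}\bigr]\bigr)$. On the other hand the first bullet forces $\mu(E_2)\le\mu(E)=\frac dn$. First I would combine these two inequalities to obtain
$$
1+\frac1{n_2}\left(g-\left[\frac{g}{n_2+1}\right]\right)\le\frac dn,
$$
and then exploit the hypothesis $d<g+n+\frac{g}{n-1}$ to push this toward a contradiction.

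The main technical point is to control the function $n_2\mapsto \frac1{n_2}\bigl(g-\bigl[\frac{g}{n_2+1}\bigr]\bigr)$ over the allowed range $1\le n_2\le n-2$. By Lemma \ref{l2}, this is exactly the decreasing function $f(n_2)$, so its minimum over $n_2\le n-2$ is attained at $n_2=n-2$, giving $f(n_2)\ge f(n-2)=\frac1{n-2}\bigl(g-\bigl[\frac{g}{n-1}\bigr]\bigr)$. Substituting this worst case into the displayed inequality, I would reduce the whole problem to showing that
$$
1+\frac1{n-2}\left(g-\left[\frac{g}{n-1}\right]\right)>\frac dn
$$
whenever $d<g+n+\frac{g}{n-1}$, which then contradicts $\mu(E_2)\le\mu(E)$ and eliminates all such extensions at once. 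Because $f$ is decreasing, handling the extreme value $n_2=n-2$ suffices; the intermediate values are automatically covered.

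The hard part will be the final arithmetic estimate: I must verify that the bound $d<g+n+\frac{g}{n-1}$ is precisely strong enough to guarantee the strict inequality above for all $n\ge3$. I expect this to hinge on replacing the floor $\bigl[\frac{g}{n-1}\bigr]$ by $\frac{g}{n-1}$ (an upper bound, which makes the left-hand side smaller and hence the inequality harder), and then checking a clean rational inequality in $d,g,n$. Concretely, using $g-\bigl[\frac{g}{n-1}\bigr]\ge g-\frac{g}{n-1}=\frac{g(n-2)}{n-1}$, the left-hand side is at least $1+\frac{g}{n-1}$, so it suffices to show $n\bigl(1+\frac{g}{n-1}\bigr)\ge d$, i.e. $d\le n+\frac{ng}{n-1}$; I would compare this with the hypothesis $d<g+n+\frac{g}{n-1}$ and confirm that the role of $n\ge3$ makes the comparison go through. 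Once the numerical contradiction is in place, no extension of the stated form can exist, which is exactly the assertion of the proposition.
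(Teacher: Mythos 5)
Your proposal is correct and follows essentially the same route as the paper: combine $1+\frac1{n_2}\bigl(g-\bigl[\frac{g}{n_2+1}\bigr]\bigr)\le\mu(E_2)\le\frac dn$ with Lemma \ref{l2} to reduce to $n_2=n-2$, replace the floor by $\frac{g}{n-1}$, and observe that $n\bigl(1+\frac{g}{n-1}\bigr)=g+n+\frac{g}{n-1}$, contradicting the hypothesis on $d$. The only cosmetic difference is that you argue by contrapositive where the paper derives $d\ge g+n+\frac{g}{n-1}$ directly; the arithmetic is identical and checks out.
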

\begin{proof}
Suppose we have such an extension. Then
$$1+\frac1{n_2}\left(g-\left[\frac{g}{n_2+1}\right]\right)\le\mu(E_2)\le\frac{d}n.$$
By Lemma \ref{l2}, the left hand side of this inequality is a
decreasing function of $n_2$; so we have
$$1+\frac1{n-2}\left(g-\left[\frac{g}{n-1}\right]\right)\le\frac{d}n,$$
i.e.
\begin{eqnarray*}
d&\ge&g+n+\frac{2g}{n-2}-\frac{n}{n-2}\left[\frac{g}{n-1}\right]\\
&\ge&g+n+\frac{2g}{n-2}-\frac{ng}{(n-2)(n-1)}\\
&=&g+n+\frac{g}{n-1}.
\end{eqnarray*}
This gives the required contradiction.
\end{proof}

It remains to consider the extensions (\ref{eqn2'}) for which
$n_2=n-1$. We have two cases:
\begin{equation}\label{eqn3}
0\rightarrow(E_1,V_1)\rightarrow(E,V)\rightarrow(E_2,V_2)\rightarrow0,\
\ n_1=k_1=1
\end{equation}
and
\begin{equation}\label{eqn4}
0\rightarrow(E_1,0)\rightarrow(E,V)\rightarrow(E_2,V_2)\rightarrow0,\
\ n_1=1.
\end{equation}
\begin{proposition}\label{prop6}
Suppose that $X$ is a Petri curve, $n\ge2$ and $d>g+n$. Then the extensions (\ref{eqn3})
which satisfy the conditions stated above depend on at most
$\beta-1$ parameters.
\end{proposition}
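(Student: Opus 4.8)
The plan is to apply Lemma \ref{alpha+} to the extensions (\ref{eqn3}), in which $(E_1,V_1)$ has type $(1,d_1,1)$ and $(E_2,V_2)$ has type $(n-1,d_2,n)$ with $d_1+d_2=d$. Since there are only finitely many splittings $d=d_1+d_2$ compatible with the conditions stated before (\ref{eqn3}), it suffices to fix one such pair of types and bound the number of parameters on which the resulting $(E,V)$ depend by $\beta-1$; the finite union of these families then also depends on at most $\beta-1$ parameters.

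First I would set up the two families so as to meet the hypotheses of Lemma \ref{alpha+}. A coherent system of type $(1,d_1,1)$ is a line bundle of degree $d_1$ with a chosen one-dimensional space of sections, equivalently an effective divisor of degree $d_1$; these are parametrised by $\mathrm{Sym}^{d_1}X$, so $(E_1,V_1)$ varies in a family of dimension $d_1=\beta(1,d_1,1)$. For $(E_2,V_2)$, the stated conditions say it is generated with $E_2$ stable; since $\mu(E_2)>0$ we have $H^0(E_2^*)=0$, so by Theorem \ref{t1}(3) (or Proposition \ref{lem1}) $(E_2,V_2)\in G_L(n-1,d_2,n)$, which by Theorem \ref{t1} depends on at most $\beta(n-1,d_2,n)$ parameters. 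Both families therefore vary in at most $\beta(n_j,d_j,k_j)$ parameters, as required.

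Next I would assemble the numerical ingredients. Because $(E,V)$ lies in $G_L$ and is thus $\alpha$-stable for large $\alpha$, Remark \ref{rmk100} allows me to take $h_0=0$. From (\ref{c12}), using $k_1-n_1=0$,
$$C_{12}=(k_1-n_1)(d_2-n_2(g-1))+n_2d_1-k_1k_2=(n-1)d_1-n.$$
As $(E_2,V_2)$ is generated of type $(n-1,d_2,n)$, its evaluation kernel $N_2$ is a line bundle of degree $-d_2$; hence by (\ref{h2}) one has $\dim\mathbb{H}^2_{21}=h^0(E_1^*\otimes N_2\otimes K)$, where $E_1^*\otimes N_2\otimes K$ is a line bundle of degree $2g-2-d<g$. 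It remains to bound this $h^0$ and to verify $C_{12}-h_0-h_2>0$.

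The heart of the matter is this last inequality. From $\mu(E_2)\le\mu(E)$ one gets $d_1\ge d/n$, whence $C_{12}\ge(n-1)d/n-n$. For $h_2$ I would note that $E_1^*\otimes N_2\otimes K$ has degree $2g-2-d$: if this is negative then $h_2=0$, while otherwise Clifford's theorem gives $h_2\le g-\tfrac d2$. In the Clifford case a short rearrangement reduces $C_{12}>h_2$ to
$$d\cdot\frac{3n-2}{2n}>g+n,$$
which holds because $d>g+n$ and $\frac{3n-2}{2n}\ge1$ for $n\ge2$; in the vanishing case one similarly reduces $C_{12}>0$ to $d>n^2/(n-1)$, again a consequence of $d>g+n$. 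I expect this final step to be the delicate part: when $n=2$ one has $\frac{3n-2}{2n}=1$, so it is exactly here that the strict hypothesis $d>g+n$ is invoked and the estimate is tight. Once $C_{12}-h_0-h_2>0$ is established, Lemma \ref{alpha+} yields the stated bound of $\beta-1$ parameters.
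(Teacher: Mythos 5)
Your proposal is correct and follows essentially the same route as the paper: it applies Lemma \ref{alpha+} with $h_0=0$ via Remark \ref{rmk100}, computes $C_{12}=(n-1)d_1-n$, identifies $\dim\mathbb{H}^2_{21}=h^0(E_1^*\otimes N_2\otimes K)$ with $E_1^*\otimes N_2\otimes K$ of degree $2g-2-d$, and splits into the Clifford case ($d\le 2g-2$) and the vanishing case ($d>2g-2$), arriving at the same inequality $\frac{3n-2}{2n}d>g+n$. The only (harmless) additions are the explicit remark about the finitely many splittings $d=d_1+d_2$ and the observation that the estimate is tight for $n=2$.
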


\begin{proof}
Since $E_2$ is stable and $(E_2,V_2)$ is generated, $(E_2,V_2)\in
G_L(n_2,d_2,n_2+1)$ by Proposition \ref{lem1}. Hence $(E_2,V_2)$
depends on $\beta(n_2,d_2,n_2+1)$ parameters, while $(E_1,V_1)$ depends on
$d_1=\beta(1,d_1,1)$ parameters. By Remark \ref{rmk100},
$$\mathbb{H}^0_{21}=Hom((E_2,V_2),(E_1,V_1))=0.$$
By Lemma \ref{alpha+}, it remains to prove that

\begin{equation}\label{eqn5}
C_{12}>\dim\mathbb{H}^2_{21}.
\end{equation}
Now, by (\ref{c12}),
$$C_{12}=(n-1)d_1-n,$$
while
$$\dim\mathbb{H}^2_{21}=h^0(E_1^*\otimes N_2\otimes K),$$
where $N_2$ is the kernel of the evaluation map
$V_2\otimes{\mathcal O}\rightarrow E_2$. Now $E_1^*\otimes
N_2\otimes K$ is a line bundle of degree $2g-2-d$. If $d\le 2g-2$,
then, by Clifford's Theorem,
$$h^0(E_1^*\otimes N_2\otimes K)\le g-1-\frac{d}2+1=g-\frac{d}2.$$
So (\ref{eqn5}) holds if
$$(n-1)d_1-n>g-\frac{d}2.$$
Since $d_1\ge\frac{d}n$, this will be true if
$$\frac{(n-1)d}n-n>g-\frac{d}2,$$
i.e. if
$$\frac{3n-2}{2n} d>g+n.$$
This is certainly true since $d>g+n$.

If $d>2g-2$, then $h^0(E_1^*\otimes N_2\otimes K)=0$ and we
require to prove only that $C_{12}>0$. In fact
$$C_{12}=(n-1)d_1-n\ge\frac{n-1}n
d-n>\frac{n-1}n(g+n)-n=\frac{n-1}n g-1\ge0.$$
\end{proof}

\begin{remark}\label{re10}\begin{em}
Propositions \ref{prop5} and \ref{prop6} are directed towards
proving that $U(n,d,n+1)\ne\emptyset$. If we wish only to prove that
$U^s(n,d,n+1)\ne\emptyset$, we are not concerned with the stability of $E$
and we need to consider extensions (\ref{eqn3}) under the usual
conditions of \cite[section 6.2]{bomn} for the flip loci $G_i^+$.
We can still assume
that $(E,V)$ is generated with $H^0(E^*)=0$, so  $(E_2,V_2)$ is
also generated with $H^0(E_2^*)=0$, hence $d_2\ge
g+n_2-\left[\frac{d}{n_2+1}\right]$, and now $\mu(E_2)<\mu(E)$. So the result of
Proposition \ref{prop5} holds under the assumption $d\le
g+n+\frac{g}{n-1}$. In Proposition \ref{prop6}, note that
$(E_2,V_2)\in G_L(n_2,d_2,n_2+1)$ by Theorem \ref{t1}(3); so $(E_2,V_2)$ depends
on precisely $\beta(n_2,d_2,n_2+1)$ parameters and the rest of the proof goes
through.

\end{em}\end{remark}

We turn now to the consideration of the extensions (\ref{eqn4}). 

\begin{proposition}\label{prop7}
Let $X$ be a Petri curve and $n\ge3$. Suppose that $d<g+n+\frac{g}{n-1}$. Then there exist no extensions
(\ref{eqn4}) satisfying the conditions of (\ref{eqn2'}) with
\begin{equation}\label{eqn16}
\frac{d}n<\frac{2g}{2n-1}+2.
\end{equation}
\end{proposition}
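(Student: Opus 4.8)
The plan is to suppose that such an extension (\ref{eqn4}) exists and to derive a contradiction. Since $(E,V)$ is a generic element of $G_L$ it is generated with $H^0(E^*)=0$, so $(E_2,V_2)$ is generated with $H^0(E_2^*)=0$, and the evaluation sequences of $(E,V)$ and $(E_2,V_2)$ fit into a commutative diagram whose snake lemma (using $V_1=0$, so that $V\cong V_2$) yields a short exact sequence $0\to M_{V}\to N_2\to E_1\to0$, where $M_V=(\det E)^*$ is the kernel of $V\otimes\mathcal{O}\to E$ and $N_2$ is the rank-$2$ kernel of $V_2\otimes\mathcal{O}\to E_2$; generation of $(E,V)$ is equivalent to surjectivity of $N_2\to E_1$. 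The essential new feature, compared with (\ref{eqn3}), is that $(E_2,V_2)$ has type $(n-1,d_2,n+1)$ with $k_2=n_2+2$, so that Theorem \ref{t1} does not apply to it and $N_2$ genuinely has rank $2$; this is exactly why a separate argument is needed here.

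The engine of the proof is the inequality $\beta(n-1,d_2,n+1)\ge0$. I would deduce this from Proposition \ref{e}: it suffices to show that the Petri map (\ref{pet1}) is injective at $(E_2,V_2)$, for then $G(\alpha;n-1,d_2,n+1)$ is smooth of dimension $\beta(n-1,d_2,n+1)$ at this point, forcing $\beta(n-1,d_2,n+1)\ge0$. Although the Petri map of a stable coherent system need not be injective in general, here we are on a Petri curve and, by Theorem \ref{t1}(4) together with Proposition \ref{e}, the Petri map of the ambient system $(E,V)\in G_L$ \emph{is} injective. The plan is to transfer this injectivity to the quotient $(E_2,V_2)$: using $E_2^*\hookrightarrow E^*$ (hence $H^0(E_2^*\otimes K)\hookrightarrow H^0(E^*\otimes K)$) and $V_2\cong V$, the Petri map of $(E_2,V_2)$ sits compatibly inside that of $(E,V)$, and one reduces its injectivity to injectivity of the multiplication maps $H^0(A)\otimes H^0(A^*\otimes K)\to H^0(K)$ of the line bundles arising from the filtration of $E_2$, which hold by the Petri hypothesis. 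This transfer is the main obstacle, since it is precisely where the rank-$\ge2$ behaviour, normally uncontrolled on a Petri curve, must be tamed.

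Granting $\beta(n-1,d_2,n+1)\ge0$, the remainder is a numerical comparison. Solving $\beta(n-1,d_2,n+1)\ge0$ for $d_2$ gives the lower bound $d_2\ge\frac{n(n+2)+2(n-1)(g-1)}{n+1}$, whose coefficient of $g$ is $\frac{2(n-1)}{n+1}$. On the other hand, $\mu(E_2)\le\mu(E)=\frac{d}{n}$ gives $d_2\le\frac{(n-1)d}{n}$, and hypothesis (\ref{eqn16}) then gives $d_2<\frac{2g(n-1)}{2n-1}+2(n-1)$, whose coefficient of $g$ is $\frac{2(n-1)}{2n-1}$. Since $\frac{2(n-1)}{n+1}>\frac{2(n-1)}{2n-1}$ precisely when $n>2$, for $n\ge3$ the lower bound exceeds the upper bound for all sufficiently large $g$, producing the desired contradiction; the remaining hypothesis $d<g+n+\frac{g}{n-1}$ and a careful treatment of the floor term $\left[\frac{g}{n}\right]$ are what is needed to close the finitely many small-genus cases not covered by the leading-order comparison. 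This both explains the appearance of the constant $2n-1$ in (\ref{eqn16}) and the restriction $n\ge3$, and so completes the argument.
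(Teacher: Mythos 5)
Your reduction to the claim $\beta(n-1,d_2,n+1)\ge0$ is the crux of the argument, and it is exactly the step that does not go through. First, the proposed transfer of Petri-map injectivity from $(E,V)$ to its quotient $(E_2,V_2)$ fails structurally: with $V_2\cong V$ and $H^0(E_2^*\otimes K)\subset H^0(E^*\otimes K)$, the restriction of the Petri map of $(E,V)$ lands injectively in $H^0(E\otimes E_2^*\otimes K)$, but the Petri map of $(E_2,V_2)$ is obtained by further composing with the map to $H^0(E_2\otimes E_2^*\otimes K)$ induced by the surjection $E\to E_2$, and this composition can kill elements (anything landing in $H^0(E_1\otimes E_2^*\otimes K)$). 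The promised reduction ``to the line bundles in a filtration of $E_2$'' is precisely the uncontrolled higher-rank step: the paper itself emphasises, citing \cite{te1}, that Petri maps of stable higher-rank coherent systems on a general curve need not be injective. Second, even as a bare statement, ``$(E_2,V_2)$ of type $(n_2,d_2,n_2+2)$ exists $\Rightarrow\beta(n_2,d_2,n_2+2)\ge0$'' is an instance of the open non-emptiness problem for $k=n+2$ sections that this paper is deliberately working around; compare Remark \ref{rmk:400}, where ruling out type $(3,7,5)$ with $E_2$ stable (here $\beta(3,7,5)<0$) requires the ad hoc analysis of Proposition \ref{prop33} and was previously known only for $g\ge6$. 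Third, even granting the engine, your leading-order comparison yields a contradiction only for $g$ roughly at least $n$, and the low-genus endgame is asserted rather than carried out; moreover the resulting lower bound on $d_2$ is weaker than the one actually needed to match (\ref{eqn16}) exactly.

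For contrast, the paper never leaves the world of line bundles, where the Petri hypothesis genuinely applies. From the evaluation sequence of $(E_2,V_2)$ one gets the rank-$2$ bundle $N_2^*$ with $h^0(N_2^*)\ge n+1$ (using $h^0(E_2^*)=0$). The hypothesis $d<g+n+\frac{g}{n-1}$, fed through classical Brill--Noether bounds for the sub- and quotient line bundles in $0\to{\mathcal L}_1\to N_2^*\to{\mathcal L}_2\to0$, shows that every line subbundle ${\mathcal L}_1$ of $N_2^*$ has $h^0({\mathcal L}_1)\le1$; then \cite[Lemma 3.9]{pr} gives $h^0(\det N_2^*)\ge2n-1$, and classical Brill--Noether theory applied to the single line bundle $\det N_2^*$ yields $d_2=\deg N_2^*\ge\frac{(2n-2)g}{2n-1}+2n-2$, which contradicts (\ref{eqn16}) on the nose --- this is where the constant $2n-1$ really comes from. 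If you want to rescue your approach, you need some substitute for the Paranjape--Ramanan lemma that converts $h^0(N_2^*)\ge n+1$ into a degree bound without invoking higher-rank Brill--Noether theory.
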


\begin{proof} Since $(E_2,V_2)$ is generated, we can
write as usual
$$0\rightarrow N_2\rightarrow V_2\otimes{\mathcal O} \rightarrow
E_2\rightarrow0.$$
Note that $H^0(N_2)=0$ and that $(N_2^*,V_2^*)$ is generated. Moreover $N_2^*$ has rank $2$ and, since $h^0(E_2^*)=0$, $h^0(N_2^*)\ge n+1$. Suppose we prove that, for any line subbundle ${\mathcal L}_1$ of $N_2^*$,
\begin{equation}\label{eqnl1}
h^0({\mathcal L}_1)\le1.
\end{equation}
Then, by \cite[Lemma 3.9]{pr},
$$h^0(\det N_2^*)\ge 2n-1.$$
Hence, by classical Brill-Noether theory and the assumption $\mu(E_2)\le\mu(E)$,
$$\frac{(n-1)d}n\ge d_2=\deg N_2^*\ge\frac{(2n-2)g}{2n-1}+2n-2,$$
which contradicts (\ref{eqn16}).

It remains to prove (\ref{eqnl1}). Consider an exact sequence
$$
0\rightarrow {\mathcal L}_1\rightarrow N_2^* \rightarrow {\mathcal L}_2\rightarrow0.
$$
Since $N_2^*$ is generated, so is ${\mathcal L}_2$. But ${\mathcal L}_2$ is certainly
not trivial since $h^0(N_2)=0$, so $h^0({\mathcal L}_2)=s\ge2$
and
$$\deg {\mathcal L}_2\ge\frac{(s-1)g}s+s-1.$$
If $s<n$, then $h^0({\mathcal L}_1)\ge n+1-s\ge2$ and
$$\deg {\mathcal L}_1\ge\frac{(n-s)g}{n-s+1}+n-s.$$
So
\begin{eqnarray*}
d_2=\deg N_2^*&\ge&\frac{(s-1)g}s+s-1+\frac{(n-s)g}{n-s+1}+n-s\\
&=&2g-\frac{(n+1)g}{s(n-s+1)}+n-1.
\end{eqnarray*}
Since $2\le s\le n-1$, this gives
\begin{equation}\label{eqn12}
d_2\ge 2g-\frac{(n+1)g}{2(n-1)}+n-1\ge g+n-1;
\end{equation}
since $\frac{(n-1)d}n\ge d_2$, this contradicts the assumption that 
$d<g+n+\frac{g}{n-1}$. It follows that $s\ge n$, so
$$\deg {\mathcal L}_2\ge\frac{(n-1)g}n+n-1$$
and
\begin{equation}\label{eqn14}
\deg
{\mathcal L}_1=d_2-\deg{\mathcal L}_2<g+n-1-\frac{(n-1)g}n-n+1=\frac{g}n.
\end{equation} The inequality (\ref{eqnl1}) now follows
from classical Brill-Noether theory. This completes the proof.
\end{proof}

\begin{remark}\label{re11}\begin{em}
The non-strict inequality
\begin{equation}\label{eqn15}
d\le g+n+\frac{g}{n-1}
\end{equation}
is sufficient except when $n=3$, when (\ref{eqn12}) fails to
give a contradiction. The
other place where the inequality $d<g+n+\frac{g}{n-1}$ is used
is (\ref{eqn14}). In this case (\ref{eqn15}) gives $\deg {\mathcal L}_1\le\frac{g}n,$ which is sufficient for (\ref{eqnl1}). In particular,
if $n\ge4$, (\ref{eqn15}) and (\ref{eqn16}) are
sufficient for the validity of Proposition \ref{prop7}.
\end{em}\end{remark}

\section{The cases $n=2$, $n=3$ and $n=4$ }\label{n=23}
In this section we shall assume that $g\ge3$.

\begin{theorem}\label{th2}
Let $X$ be a Petri curve of genus $g\ge3$. Then
$U(2,d,3)\ne\emptyset$ if and only if $\beta(2,d,3)\ge0$.
\end{theorem}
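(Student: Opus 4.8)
The plan is to prove both directions of the equivalence for $n=2$. The forward direction is immediate: if $U(2,d,3)\neq\emptyset$, then $G(\alpha)\neq\emptyset$ for large $\alpha$, and by Theorem \ref{t1}(1) (or directly by Proposition \ref{prop1}) this forces $\beta(2,d,3)\geq0$. So the content is entirely in the converse: assuming $\beta(2,d,3)\geq0$, I must exhibit a coherent system $(E,V)$ of type $(2,d,3)$ with $E$ stable and $(E,V)$ $\alpha$-stable for all $\alpha>0$.

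The strategy is to reduce to a bounded range of degrees and then patch. First I would invoke Remark \ref{bint}: since $n=2$, it suffices to prove non-emptiness of $U(2,d,3)$ for two consecutive values of $d$ (any interval of length $n-1=1$), and non-emptiness then propagates to all larger degrees. More precisely, combining Remark \ref{bint} with Proposition \ref{prop4} (which upgrades $U(2,2a,3)\neq\emptyset$ to $U(2,d,3)\neq\emptyset$ for $d>2a$ with $d\equiv\pm1\bmod 2$, i.e. for all odd $d>2a$), the task is to secure non-emptiness at the bottom of the admissible range and for the even degrees. The condition $\beta(2,d,3)\geq0$ translates, via Remark \ref{r1} with $n=2$, into $d\geq g+2-\left[\frac{g}{3}\right]$; call this lower bound $d_{\min}$. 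So I need to cover all $d\geq d_{\min}$.

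The main work is therefore to handle the low and intermediate degrees using the two extension techniques developed in the previous section. For the genuinely small degrees $d_{\min}\leq d\leq g+2$, I would appeal to Proposition \ref{th1}, which gives $U(2,d,3)\neq\emptyset$ in exactly this range provided $g$ and $n$ are not both $2$; since here $n=2$ but $g\geq3$, this hypothesis is satisfied and the proposition applies. For degrees $d>g+2$, I would run the extension argument sketched before Proposition \ref{prop5}: take a generic $(E,V)\in G_L$ and argue that $E$ is stable. Since $n=2$, the only possible destabilizing quotient has $n_2=1$, so Proposition \ref{prop5} (which rules out $n_2\leq n-2=0$) is vacuous, and I am left with the two cases (\ref{eqn3}) and (\ref{eqn4}), both with $n_2=1$. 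Proposition \ref{prop6} shows the extensions (\ref{eqn3}) depend on at most $\beta-1$ parameters whenever $d>g+2$, which is precisely my range. The extensions (\ref{eqn4}) need separate treatment: note Proposition \ref{prop7} requires $n\geq3$ and so does not apply directly when $n=2$, which is the crux of the difficulty.

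The hard part will be disposing of the extensions of type (\ref{eqn4}) when $n=2$, since the general machinery of Proposition \ref{prop7} is unavailable. Here $N_2^*$ is a line bundle (not rank $2$) because $n_2=1$, so the argument simplifies but must be redone by hand: with $(E_2,V_2)$ generated of type $(1,d_2,2)$ one has $0\to N_2\to V_2\otimes\mathcal{O}\to E_2\to0$ with $N_2^*$ a line bundle satisfying $h^0(N_2^*)\geq n+1=3$, forcing $\deg N_2^*=d_2$ to be large by Brill-Noether, and I would check that the resulting extensions again form a family of dimension at most $\beta-1$ via Lemma \ref{alpha+} and the parameter count in (\ref{eqn:200}). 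Granting that both flip-type loci (\ref{eqn3}) and (\ref{eqn4}) have dimension strictly less than $\dim G_L=\beta$, the generic $(E,V)\in G_L$ avoids them, so its underlying bundle $E$ is stable; by Remark \ref{re9} such a generic element is generated with $H^0(E^*)=0$, and then Proposition \ref{lem1} places it in $U^s(2,d,3)$, while stability of $E$ upgrades this to $U(2,d,3)$. Assembling the ranges $d_{\min}\leq d\leq g+2$ (Proposition \ref{th1}) and $d>g+2$ (the extension argument), together with the propagation from Remark \ref{bint} and Proposition \ref{prop4}, completes the converse and hence the theorem.
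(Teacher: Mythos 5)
Your forward direction is fine, and your overall architecture (reduce to low degrees, then patch with Remark \ref{bint} and Proposition \ref{prop4}) is reasonable, but you have overlooked that the paper disposes of this theorem in one line: Proposition \ref{pt1} states that $U(n,d,n+1)\ne\emptyset$ whenever $\beta\ge0$ and $g\ge n^2-1$, and for $n=2$ the hypothesis $g\ge n^2-1=3$ is exactly the standing hypothesis $g\ge3$. So the entire converse is already covered by Propositions \ref{prop1} and \ref{pt1}, with no case analysis on $d$ needed.

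Beyond missing the short route, your substitute argument has a genuine gap precisely where you flag "the hard part": the extensions of type (\ref{eqn4}) with $n=2$. You assert that $N_2^*$ is a line bundle because $n_2=1$, but this is wrong. In (\ref{eqn4}) one has $V_1=0$, so $k_2=\dim V_2=\dim V=n+1=3$ while $n_2=1$; hence $N_2=\ker(V_2\otimes\mathcal{O}\to E_2)$ has rank $k_2-n_2=2$, exactly as in Proposition \ref{prop7}. (You appear to have conflated (\ref{eqn4}) with (\ref{eqn3}), where $k_2=2$ and $N_2$ is indeed a line bundle.) Since the rank-two analysis of Proposition \ref{prop7} is explicitly restricted to $n\ge3$ and depends on the numerical conditions $d<g+n+\frac{g}{n-1}$ and $\frac{d}{n}<\frac{2g}{2n-1}+2$, which you would still need to verify or replace for $n=2$ across the whole range $d>g+2$ not already covered by Proposition \ref{prop2}, the dimension count you "grant" is never actually established. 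As written, the converse is not proved for $d>g+2$.
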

\begin{proof} This follows at once from Propositions \ref{prop1}
and \ref{pt1}.
\end{proof}

\begin{theorem}\label{th3}
Let $X$ be a Petri curve of genus $g\ge3$. Then
$U(3,d,4)\ne\emptyset$ if and only if $\beta(3,d,4)\ge0$.
\end{theorem}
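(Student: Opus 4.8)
The plan is to follow the strategy set up in Section \ref{anyalpha} for the case $n=3$, combining the non-emptiness results already available for large degree with the extension-theoretic techniques to cover the remaining degrees. By Proposition \ref{prop1}, if $\beta(3,d,4)<0$ then $G(\alpha)=\emptyset$ for all $\alpha$, so $U(3,d,4)=\emptyset$; hence only the forward direction requires work. So assume $\beta(3,d,4)\ge0$, equivalently $d\ge g+3-\left[\frac{g}{4}\right]$ by Remark \ref{r1}, and the goal is to produce an element of $U(3,d,4)$, i.e. a coherent system $(E,V)$ with $E$ stable and $(E,V)$ $\alpha$-stable for all $\alpha>0$.

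First I would record the ranges covered by the propositions already proved. For large $d$ (at least $d\ge d_1$ with $n=3$ in Proposition \ref{prop2}) we already have $U(3,d,4)\ne\emptyset$, and Proposition \ref{prop4} lets us propagate non-emptiness from degrees $\equiv0\bmod 3$ to those $\equiv\pm1\bmod 3$ above them. Moreover, by Remark \ref{bint}, once $U(3,d,4)\ne\emptyset$ for three consecutive values of $d$ it holds for all larger $d$. The substantive part is the intermediate range $g+3-\left[\frac{g}{4}\right]\le d\le d_1-1$, and here the plan is the extension argument: take a generic $(E,V)\in G_L$, which by Theorem \ref{t1}(4) is generated with $H^0(E^*)=0$, and show $E$ is stable. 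If not, one gets an extension (\ref{eqn2'}) with a stable quotient $E_2$; the subsystems split into the cases $n_2\le n-2=1$ and $n_2=n-1=2$.

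For $n=3$ the case $n_2\le1$ is ruled out by Proposition \ref{prop5}, provided $d<g+n+\frac{g}{n-1}=g+3+\frac{g}{2}$. The case $n_2=2$ divides into the two extension types (\ref{eqn3}) and (\ref{eqn4}). For (\ref{eqn3}), Proposition \ref{prop6} (which needs only $d>g+n$) shows these depend on at most $\beta-1$ parameters, so they cannot fill $G_L$; one must separately handle the small-degree values with $d\le g+3$, which are covered by Proposition \ref{th1} (noting $g\ge3$ so $g,n$ are not both $2$). For (\ref{eqn4}), Proposition \ref{prop7} applies when $d<g+3+\frac{g}{2}$ and $\frac{d}{3}<\frac{2g}{5}+2$; I would check that for $n=3$ in the relevant degree window the hypothesis (\ref{eqn16}) is met, so that these extensions also do not occur. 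Combining: outside a bounded set of exceptional degrees the generic $(E,V)\in G_L$ has $E$ stable, whence $(E,V)\in U(3,d,4)$ by Proposition \ref{lem1}.

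The main obstacle I anticipate is that the three propositions have mildly incompatible degree hypotheses (Proposition \ref{prop6} wants $d>g+n$, while Propositions \ref{prop5} and \ref{prop7} want $d<g+3+\frac{g}{2}$, and Proposition \ref{prop7} additionally imposes (\ref{eqn16}), which for $n=3$ reads $d<\frac{6g}{5}+6$), so one must verify that these ranges, together with Proposition \ref{th1}, Proposition \ref{prop2} and the propagation results, genuinely cover every $d$ with $\beta(3,d,4)\ge0$ without a gap. The delicate point, flagged in Remark \ref{re11}, is that $n=3$ is exactly the value for which the estimate (\ref{eqn12}) fails to force a contradiction, so Proposition \ref{prop7} must be invoked through its strict-inequality form and the constraint (\ref{eqn16}) checked by hand; I would expect the proof to close by explicitly listing the finitely many residual degrees and disposing of them via the large-$\alpha$ constructions and Remark \ref{bint}.
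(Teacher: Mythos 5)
Your strategy is correct and draws on exactly the same toolkit as the paper, but the two arguments are organised differently. The paper disposes of $g\ge8$ at a stroke via Proposition \ref{pt1} (since $g\ge n^2-1=8$), covers $g+3-\left[\frac{g}{4}\right]\le d\le g+3$ by Proposition \ref{th1}, and then propagates with Proposition \ref{prop4} and Remarks \ref{R8} and \ref{bint}; the extension machinery of Propositions \ref{prop5}, \ref{prop6} and \ref{prop7} is invoked for exactly one case, $g=5$, $d=9$, which is the unique pair $(g,d)$ not reachable by propagation. You instead propose to run the extension argument across the whole intermediate range, and the arithmetic you defer does close: for $n=3$ the extension argument applies for $g+3<d<\min\left\{\frac{3g}{2}+3,\frac{6g}{5}+6\right\}$, and for $g\le10$ the binding constraint is $d<\frac{3g}{2}+3$, so (\ref{eqn16}) is automatic there; moreover for $g\ge8$ Proposition \ref{th1} already supplies $\left[\frac{g}{4}\right]+1\ge3$ consecutive degrees at the bottom of the range, so Remark \ref{bint} finishes without Proposition \ref{pt1} or any extension argument. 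Two points you should make explicit rather than leave as ``to be checked'': first, for $g=3$ the extension window is $6<d<7.5$, so $d=8$ is \emph{not} covered by extensions and must come from Proposition \ref{prop4} applied to $U(3,6,4)$; second, the caveat of Remark \ref{re11} about $n=3$ is harmless because the strict inequality $d<g+3+\frac{g}{2}$ holds in every case where you actually need Proposition \ref{prop7}. Once these are recorded, your route and the paper's prove the same statement with the same ingredients, the only real difference being that the paper minimises the use of extensions (reserving them for $g=5$, $d=9$) while you would use them wherever their numerical hypotheses permit.
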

\begin{proof}
According to Proposition \ref{pt1}, the result holds for $g\ge8$.
For lower values of $g$, the result holds by Proposition \ref{th1}
in the following cases
\begin{itemize}
\item $g=3, d=6$;
\item $g=4, d=6,7$;
\item $g=5, d=7,8$;
\item $g=6, d=8,9$;
\item $g=7, d=9,10$.
\end{itemize}
For $g\ne5$, Proposition \ref{prop4} and Remark \ref{bint} give the
result for all $d\ge g+3-\left[\frac{g}4\right]$, i.e. for all
$\beta\ge0$.

When $g=5$, Remark \ref{R8} gives the result for $d=10,11$ and
Proposition \ref{prop2} for $d\ge13$, leaving only $d=9,12$ open.
For $g=5,d=9$, the inequalities $d<g+n+\frac{g}{n-1}$, $d>g+n$ and
$\frac{d}n<\frac{2g}{2n-1}+2$ are all satisfied and the result
follows from Propositions \ref{prop5}, \ref{prop6} and
\ref{prop7}. Finally, the case $d=12$ now follows using Remark
\ref{R8}.
\end{proof}

\begin{theorem}\label{th4}
Let $X$ be a Petri curve of genus $g\ge3$. Then $U(4,d,5)\ne\emptyset$ if and only if
$\beta(4,d,5)\ge0$.
\end{theorem}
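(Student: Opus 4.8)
The plan is to follow the same structure as the proofs of Theorems \ref{th2} and \ref{th3}, combining the non-emptiness results available for large genus with the elementary-transformation and extension techniques for the remaining small values of $g$. First I would invoke Proposition \ref{pt1}, which gives $U(4,d,5)\ne\emptyset$ for $\beta\ge0$ whenever $g\ge n^2-1=15$. For $3\le g\le14$ I would then appeal to Proposition \ref{th1} to settle the cases $g+n-\left[\frac{g}{n+1}\right]\le d\le g+n$ (excluding the prohibited case $g=n=2$, which does not arise here since $n=4$), and to Proposition \ref{prop2} to settle all sufficiently large $d$; together with Remark \ref{bint} (which propagates non-emptiness upward once it holds on any window of $n$ consecutive degrees) and Remark \ref{R8} (which relates degrees congruent modulo $n$), this should clear the bulk of the range.

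The genuinely delicate part will be the handful of intermediate degrees $d$ that fall between the Proposition \ref{th1} window and the Proposition \ref{prop2} threshold and cannot be reached by Remarks \ref{R8} and \ref{bint} or by Proposition \ref{prop4}. For those I would use the extension-of-coherent-systems method, exactly as in the $g=5,d=9$ case of Theorem \ref{th3}. Taking a generic $(E,V)\in G_L$, if $E$ were not stable we would obtain an extension (\ref{eqn2'}); Proposition \ref{prop5} rules out quotients with $n_2\le n-2=2$ provided $d<g+n+\frac{g}{n-1}=g+4+\frac{g}{3}$, while the remaining cases $n_2=3$ split into extensions (\ref{eqn3}) and (\ref{eqn4}). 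Proposition \ref{prop6} handles (\ref{eqn3}) when $d>g+n=g+4$, showing those extensions depend on at most $\beta-1$ parameters, and Proposition \ref{prop7} rules out (\ref{eqn4}) provided $\frac{d}n<\frac{2g}{2n-1}+2$, i.e. $\frac{d}{4}<\frac{2g}{7}+2$. When all three inequalities
$$g+4<d<g+4+\frac{g}{3},\qquad \frac{d}{4}<\frac{2g}{7}+2$$
hold simultaneously, a generic $(E,V)\in G_L$ must have $E$ stable, so $U(4,d,5)\ne\emptyset$ and hence is of dimension $\beta$ (Remark \ref{re9}).

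The main obstacle I anticipate is verifying that these three inequalities, taken together with Proposition \ref{prop4} and the congruence arguments of Remark \ref{R8}, actually cover every residual degree for each $g$ in the range $3\le g\le14$. Unlike the $n=3$ case, where only $g=5$ produced a truly open degree, for $n=4$ there is a wider band of small genera and the window $g+4<d<g+4+\frac{g}{3}$ is narrow for small $g$, so there may be individual pairs $(g,d)$ where the extension argument does not apply and which are also not reachable by the other reductions. I would expect the proof to proceed by an explicit case-by-case tabulation of the open degrees for each $g$, analogous to the bulleted list in Theorem \ref{th3}, checking in each instance either that one of Propositions \ref{th1}, \ref{prop2}, \ref{prop4} applies, or that the triple of inequalities is satisfied so that Propositions \ref{prop5}, \ref{prop6} and \ref{prop7} combine to force $E$ stable; the necessity direction ($\beta\ge0$) is immediate from Proposition \ref{prop1}. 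The hard part is ensuring no residual $(g,d)$ slips through all of these sufficient conditions, and if some do, the theorem as stated would require an additional argument (or might have to be weakened to $U^s$ in those cases, using the variants in Remark \ref{re10}).
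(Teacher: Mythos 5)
Your framework is exactly the one the paper uses for the bulk of the cases (Propositions \ref{pt1}, \ref{th1}, \ref{prop2}, \ref{prop4}, Remarks \ref{R8} and \ref{bint}, and the extension method via Propositions \ref{prop5}--\ref{prop7}), and your necessity direction via Proposition \ref{prop1} is correct. But the obstacle you flag at the end is not hypothetical: after all of these tools are exhausted the cases $g=3$, $d=8,9,10,12$; $g=4$, $d=10,14$; and $g=6$, $d=12,16$ remain open, and none of them can be reached by the inequalities you list. For $g=6$, $d=12$ one has $d=g+n+\frac{g}{n-1}$ exactly, so Proposition \ref{prop5} (which needs strict inequality) fails; for $g=3$ or $4$, $d=10$ one has $d>g+n+\frac{g}{n-1}$, so both Propositions \ref{prop5} and \ref{prop7} are inapplicable; and $g=3$, $d=8$ is the boundary case $d=2n$, for which none of the cited propositions give anything. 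Since these are genuinely required for the ``if'' direction, your proposal as written does not prove the theorem.

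The paper closes these gaps with three further propositions. For $g=3$, $d=8$ it invokes the small-slope result \cite[Theorem 5.4]{bgmmn} (valid because $d=2n$), then bootstraps to $d=9,12$ via Proposition \ref{prop4} and Remark \ref{R8}. For $g=6$, $d=12$ it observes that equality in Proposition \ref{prop5} forces any destabilising quotient to have $n_1=n_2=2$, $d_1=d_2=6$, pins down $k_1=2$, $k_2=3$ using \cite[Lemma 3.9]{pr}, and then kills these extensions by a parameter count with Lemma \ref{alpha+}, using a bound on $\dim{\mathbb H}^2_{21}$ from \cite[Theorem 2.1]{bgn}. For $g=3,4$, $d=10$ it carries out a full case analysis of destabilising quotients of ranks $1$, $2$, $3$, using Clifford-type bounds from \cite{re} and a splitting argument to exclude the type $(3,7,5)$ quotient, before again applying Lemma \ref{alpha+}. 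These arguments are substantive and not recoverable from the general propositions, so you would need to supply something equivalent; weakening the conclusion to $U^s$ in those cases, as you suggest as a fallback, would not prove the theorem as stated.
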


\begin{proof}
Proposition \ref{pt1} gives $U(4,d,5)\ne\emptyset$ for
$g\ge15$. Now Proposition \ref{th1} covers the following cases
\begin{itemize}
\item $g=3, d=7$;
\item $g=4, d=8$;
\item $g=5, d=8,9$;
\item $g=6, d=9,10$;
\item $g=7, d=10,11$;
\item $g=8, d=11,12$;
\item $g=9, d=12,13$;
\item $g=10, d=12,13,14$;
\item $g=11, d=13,14,15$;
\item $g=12, d=14,15,16$;
\item $g=13, d=15,16,17$;
\item $g=14, d=16,17,18$.
\end{itemize}

\noindent Proposition \ref{prop4} now gives the following additional cases
\begin{itemize}
\item $g=4, d=9,11$;
\item $g=5, d=11$;
\item $g=8, d=13$;
\item $g=9, d=15$;
\item $g=10, d=15$;
\item $g=12, d=17$;
\item $g=14, d=19$.
\end{itemize}

\noindent Remark \ref{bint} now completes the argument for $g=10,12,14$.

For other $g$, we try using extensions of coherent systems. Propositions \ref{prop5},
\ref{prop6} and \ref{prop7}, together with Proposition \ref{prop4}, give the following additional cases
\begin{itemize}
\item $g=5, d=10$;
\item $g=6, d=11$;
\item $g=7, d=12,13$;
\item $g=8, d=14$;
\item $g=9, d=14$;
\item $g=11, d=16$;
\item $g=13, d=18$.
\end{itemize}

\noindent Again using Remark \ref{bint}, this completes the argument for
$g=5,7,8,9,11,13$. Moreover, in view of Proposition \ref{prop2}, the only outstanding cases are  $g=3,d=8,9, 10,12$, $g=4, d=10,14$ and $g=6,d=12,16$.

\begin{proposition}\label{prop31} Suppose that $X$ is a Petri curve of genus $3$ and $d=8,9$ or $12$. Then $U(4,d,5)\ne\emptyset$.
\end{proposition}
\begin{proof}
Suppose first that $d=8$. Since $d=2n$, the result then follows from \cite[Theorem 5.4]{bgmmn}. For $d=9$, we now use Proposition \ref{prop4} and, for $d=12$, we apply Remark \ref{R8}.
\end{proof}

\begin{proposition}\label{prop32} Suppose that $X$ is a Petri curve of genus $6$ and $d=12$ or $16$. Then $U(4,d,5)\ne\emptyset$.
\end{proposition}

\begin{proof} In view of Remark \ref{R8}, it is sufficient to prove that $U(4,12,5)\ne\emptyset$. Note that in this case we have
$$12=d=g+n+\frac{g}{n-1}\mbox{ and }\frac{d}n=3<\frac{2g}{2n-1}+2=\frac{12}7+2.$$
Let $(E,V)$ be a generic element of $G_L(4,12,5)$ and suppose that $E$ is not stable. By Remark \ref{re11} and Proposition \ref{prop6}, the only possible form for a destabilising sequence is
\begin{equation}\label{eqn:new}
0\rightarrow(E_1,V_1)\rightarrow(E,V)\rightarrow(E_2,V_2)\rightarrow0,\ \ E_2\mbox{ stable },n_2\le2.
\end{equation}
Moreover, all the inequalities in the proof of Proposition \ref{prop5} must be equalities, which is the case if and only if
$$n_1=n_2=2\mbox{ and }d_1=d_2=6.$$
Since (\ref{eqn:new}) is the only possible form for a destabilising sequence with $E_2$ stable, it follows that 
$E$ is semistable. If $k_2>3$, then \cite[Lemma 3.9]{pr} applies to give $h^0(\det E_2)\ge5$, which would require $d_2\ge9$ by classical Brill-Noether theory, a contradiction. So $k_2=3$ and $k_1=2$.

Since $(E_2,V_2)$ is generated and $h^0(E_2^*)=0$, we have $(E_2,V_2)\in U(2,6,3)$, which has dimension $\beta(2,6,3)=0$. Since $E$ is semistable and $\mu(E_1)=\mu(E)$, $E_1$ is also semistable. Moreover, $(E_1,V_1)$ must be generically generated, otherwise it would have a subsystem $({\mathcal L},V_1)$ with ${\mathcal L}$ a line bundle, contradicting the $\alpha$-stability of $(E,V)$. It follows that any subsystem $({\mathcal L}_1,W_1)$ of $(E_1,V_1)$ with ${\mathcal L}_1$ of rank 1 has $\deg {\mathcal L}_1\le3$ and $\dim W_1\le1$, so $(E_1,V_1)$ is $\alpha$-semistable for all $\alpha>0$. Now, by \cite[Theorem 5.6]{bomn},
\begin{equation*}%\label{eqn:new1}
\dim G_L(2,6,2)=\beta(2,6,2)=9.
\end{equation*}
On the other hand, if $(E_1,V_1)\not\in G_L(2,6,2)$, we have
\begin{equation}\label{eqn:new2}
0\rightarrow({\mathcal L}_1,W_1)\rightarrow(E_1,V_1)\rightarrow({\mathcal L}_2,W_2)\rightarrow0
\end{equation}
with
\begin{equation*} 
\deg {\mathcal L}_1=\deg {\mathcal L}_2=3\mbox{ and }\dim W_1=\dim W_2=1.
\end{equation*}
Moreover, for the extensions (\ref{eqn:new2}), we have, by (\ref{c12}),
\begin{itemize}
\item $C_{21}=3-1=2$;
\item $\dim{\mathbb H}^0_{21}=\dim\mbox{Hom}(({\mathcal L}_2,W_2),({\mathcal L}_1,W_1))\le1$;
\item $\dim{\mathbb H}^2_{21}=0$ by (\ref{h2}),
\end{itemize}
so
$$\dim\mbox{Ext}^1(({\mathcal L}_2,W_2),({\mathcal L}_1,W_1))\le C_{21}+1=3.$$
Since $({\mathcal L}_1,W_1)$ and $({\mathcal L}_2,W_2)$ each depend on $3$ parameters, the extensions (\ref{eqn:new2}) depend on at most
$$3+3+3-1=8<\beta(2,6,2)$$
parameters.

We now consider the extensions (\ref{eqn:new}) with $(E_1,V_1)$, $(E_2,V_2)$ as above. We have, by (\ref{c12}) and (\ref{h2}),
\begin{itemize}
\item $C_{12}=12-6=6$;
\item $\dim{\mathbb H}^2_{21}=h^0(E_1^*\otimes N_2\otimes K)\le3$ by \cite[Theorem 2.1]{bgn} since $E_1^*\otimes N_2\otimes K$ is semistable of rank $2$ and slope
$$-\frac{d_1}2+\deg N_2+\deg K=-3-6+10=1;$$
\item ${\mathbb H}^0_{21}=0$ by Remark \ref{rmk100}.
\end{itemize}
So, by Lemma \ref{alpha+}, the general $(E,V)\in G_L(4,12,5)$ does not admit an extension (\ref{eqn:new}) and we are done.

\end{proof}

\begin{proposition}\label{prop33}
Suppose that $X$ is a Petri curve of genus $3$ or $4$ and $d=10$ or $14$. Then $U(4,d,5)\ne\emptyset$.
\end{proposition}

\begin{proof}
In view of Remark \ref{R8}, it is sufficient to prove that $U(4,10,5)\ne\emptyset$.
Let $(E,V)$ be a generic element of $G_L(4,10,5)$ and suppose that $E$ is not stable. Then we have a destabilising sequence
\begin{equation}\label{eqn:new4}
0\rightarrow(E_1,V_1)\rightarrow(E,V)\rightarrow(E_2,V_2)\rightarrow0
\end{equation}
satisfying the conditions of (\ref{eqn2'}). We have the following possibilities.
\begin{itemize}
\item $n_2=1$: $3\le\mu(E_2)\le\frac52$, which is a contradiction.
\item $n_2=2$: $\frac12(g+1)\le\mu(E_2)\le\frac52$, so $d_2=4$ or $5$ if $g=3$, $d_2=5$ if $g=4$; moreover $k_2\ge3$ and, by \cite{re}, $h^0(E_2)\le\frac72$, so $k_2=3$.
\item $n_2=3$: $2\le\mu(E_2)\le\frac52$, so $d_2=6$ or $7$; moreover $k_2\ge4$ and, by \cite{re}, $h^0(E_2)\le\frac{d_2+3}2$, giving the possibilities $(d_2,k_2)=(6,4), (7,4), (7,5)$.
\end{itemize}

We consider first the case $n_2=3$. If $k_2=4$, we are in the situation of (\ref{eqn3}) and Proposition \ref{prop6} applies. In the remaining case $d_2=7$, $k_2=5$, we have $h^0(\det E_2)=8-g\le5$. So, by \cite[Lemma 3.9]{pr}, $E_2$ possesses either a line subbundle ${\mathcal L}$ with $h^0({\mathcal L})\ge2$ or a subbundle $F$ of rank $2$ with $h^0(F)\ge3$. In the first case, since $E_2$ is stable, we have $\deg {\mathcal L}\le2$, a contradiction. In the second case $d_F:=\deg F\le4$ and any line subbundle of $F$ has $\deg {\mathcal L}\le2$, hence $h^0({\mathcal L})\le1$. It follows that, for any subspace $W$ of $H^0(F)$ of dimension $3$, $(F,W)\in G_L(2,d_F,3)$. Hence, by Theorem \ref{t1}(1), $\beta(2,d_F,3)\ge0$. Since $d_F\le4$, this holds only when $g=3$, $d_F=4$. It follows that $F$ is semistable and, by \cite{re}, $h^0(F)\le3$ and hence $h^0(F)=3$. Note further that $F$ is not strictly semistable, for otherwise we would have a sequence $0\to {\mathcal L}_1\to F\to {\mathcal L}_2\to0$ with $\deg{\mathcal L}_1=\deg{\mathcal L}_2=2$, so that $h^0(F)\le2$. Hence $F$ is stable and $(F,W)\in U(2,4,3)$. Now let $W_1:=H^0(F)\cap V_2$ and consider the exact sequence
\begin{equation}\label{eqn:new5}
0\to(F,W_1)\to(E_2,V_2)\to({\mathcal L},W_2)\to0,
\end{equation}
where $\dim W_1\le3$. If $\dim W_1<3$, then $\dim W_2\ge3$, contradicting the fact that $\deg{\mathcal L}=3$.
So $\dim W_2=2$, $\dim W_1=3$ and
$$(F,W_1)\in U(2,4,3),\ \ ({\mathcal L},W_2)\in U(1,3,2).$$
For the extensions (\ref{eqn:new5}), we have, by (\ref{c12}) and (\ref{h2}),
\begin{itemize}
\item $C_{21}=4-4+6-6=0$;
\item ${\mathbb H}^0_{21}=0$ by Remark \ref{rmk100}; 
\item $\dim{\mathbb H}^2_{21}=h^0(F^*\otimes {\mathcal L}^*\otimes K)^*=0$ since $F^*\otimes {\mathcal L}^*\otimes K$ is stable of degree $-2$.
\end{itemize}
So, by (\ref{eqn:200}), the extension (\ref{eqn:new5}) splits, which contradicts the stability of $E_2$. We have therefore proved that the only possible destabilising sequences for a general $(E,V)$ of type (\ref{eqn:new4}) with $E_2$ stable are those with $n_2=2$.

Suppose then that $n_2=2$. We have $k_2=h^0(E_2)=3$ and we know that $(E_2,V_2)$ is generated and $h^0(E_2^*)=0$, so $(E_2,V_2)\in U(2,d_2,3)$. Suppose now that $E$ is semistable, so that $d_2=5$. Then also $E_1$ is semistable and in fact stable since $\gcd(n_1,d_1)=1$. It follows that any line subbundle ${\mathcal L}$ of $E_1$ has $\deg {\mathcal L}\le2$ and hence $h^0({\mathcal L})\le1$. So $(E_1,V_1)\in U(2,5,2)$. For the extensions (\ref{eqn:new4}), we have, by (\ref{c12}) and (\ref{h2}),
\begin{itemize}
\item $C_{12}=10-6=4$;
\item $\dim{\mathbb H}^2_{21}=h^0(E_1^*\otimes N_2\otimes K)=0$ since $E_1^*\otimes N_2\otimes K$ is stable with slope $<0$;
\item ${\mathbb H}^0_{21}=0$ by Remark \ref{rmk100}.
\end{itemize}
So, by Lemma \ref{alpha+}, the general $(E,V)$ does not admit an extension of this type.

It remains to consider the possibility that $E$ is not semistable. From the above, this can happen only when $g=3$ and we have an extension (\ref{eqn:new4}) with 
$$n_1=n_2=2,\ d_1=6,\ d_2=4,\ k_1=2,\ k_2=3.$$
We certainly have $(E_2,V_2)\in U(2,4,3)$, but we can no longer guarantee that $E_1$ is semistable. However the maximal degree of a line subbundle of $E_1$ is $4$, for otherwise $E$ would have a quotient bundle of rank $3$ and degree $\le5$; this cannot be stable since $E$ has no stable quotient bundles of rank $3$ contradicting the stability of $E$. It follows that $E$ would have either a quotient line bundle of degree $\le1$ or a stable quotient bundle of rank $2$ of degree $\le3$; both of these are impossible (see the itemized list following (\ref{eqn:new4})). Moreover, we can still argue as in the proof of Proposition \ref{prop32} to show that $(E_1,V_1)$ depends on at most $\beta(2,6,2)$ parameters. Now for the extensions (\ref{eqn:new4}), we have, by (\ref{c12}) and (\ref{h2}),
\begin{itemize}
\item $C_{12}=12-6=6$;
\item $\dim {\mathbb H}^2_{21}=h^0(E_1^*\otimes N_2\otimes K)=0$ since $\deg N_2\otimes K=0$ and the maximal degree of a line subbundle of $E_1^*$ is $-2$;
\item ${\mathbb H}^0_{21}=0$ by Remark \ref{rmk100}.
\end{itemize}
The result now follows from another application of Lemma \ref{alpha+}.

\end{proof}

This completes the proof of Theorem \ref{th4}.
\end{proof}

\begin{remark}\label{rmk:400}\begin{em}
In the course of proving Proposition \ref{prop33}, we have shown that there is no 
coherent system $(E_2,V_2)$ of type $(3,7,5)$ on a Petri curve of genus $3$ or $4$ with $E_2$ stable. A 
slight modification of the proof shows that $G(\alpha;3,7,5)=\emptyset$ for all 
$\alpha>0$ and all $g\ge3$ (we have to prove that $E_2$ is stable for all $(E_2,V_2)\in G(\alpha;3,7,5)$). Since $\beta(3,7,5)=17-6g<0$ for $g\ge3$, this is to be expected, but, so far as we are aware, 
it has previously been proved only for $g\ge6$ (see \cite[Theorem 3.9]{le}, where it is shown that, for $k>n$,  $G(\alpha;n,d,k)=\emptyset$ if $\beta(n,d,n+1)<0$; in this case $\beta(3,7,4)=16-3g<0$ if and only if $g\ge6$.).\end{em}
\end{remark}

\section{Low genus}\label{low}

The cases $g=0$ and $g=1$ have been excluded from the earlier
part of this paper since they present special features and have
been handled elsewhere \cite{ln1,ln2}.

For $g=0$, there are no stable bundles of rank $\ge2$, so $U(n,d,n+1)$ is
always empty if $n\ge2$. Moreover, if $d$ is not divisible by $n$, there exist
no semistable bundles; hence $U^s(n,d,n+1)=\emptyset$. For the remaining case,
when $d$ is
divisible by $n$, $U^s(n,d,n+1)\ne\emptyset$ (see \cite[Proposition 6.4]{ln1}).
One may note that in this case $\beta\ge0$ is equivalent to $d\ge n$.

For $g=1$, the moduli spaces $G(\alpha)$ are well understood (see \cite{ln2}). The
results for $U(n,d,n+1)$ and $U^s(n,d,n+1)$ are summarised in the following theorem.

\begin{theorem}\label{genus1}
Let $X$ be a curve of genus $1$ and $n\ge2$. Then
\begin{itemize}
\item $U^s(n,d,n+1)\ne\emptyset$ if and only if $d\ge n+1$;
\item $U(n,d,n+1)\ne\emptyset$ if and only if $d\ge n+1$ and $\gcd(n,d)=1$.
\end{itemize}
\end{theorem}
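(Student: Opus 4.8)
The plan is to reduce both assertions to standard facts about (semi)stable bundles on an elliptic curve, combined with Proposition \ref{lem1}. The key point is that on a curve of genus $1$ the canonical bundle $K$ is trivial, so for any semistable bundle $E$ of rank $n$ and degree $d$ one has $h^1(E)=h^0(E^*)$, where $E^*$ is semistable of degree $-d$. Hence, if $d>0$, then $h^0(E^*)=0$ and Riemann--Roch gives $h^0(E)=d$; applying the same computation to $E(-x)=E\otimes\mathcal{O}(-x)$ shows that, when $d>n$, the evaluation map $H^0(E)\to E_x$ is surjective at every point $x$, so that $E$ is globally generated. I record also that for $d\le0$ semistability forces $h^0(E)\le n$ (and $h^0(E)=0$ when $d<0$), a bound that will supply the converse.

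For the existence statements I would assume $d\ge n+1$, so that $d/n>1$. First I would produce a semistable bundle $E$ of rank $n$ and degree $d$: when $\gcd(n,d)=1$ a stable such bundle exists by the classification of bundles on an elliptic curve (see \cite{ln2}), and in general one may take a direct sum of stable bundles all of slope $d/n$. By the previous paragraph, $E$ is globally generated and $h^0(E)=d\ge n+1$. Since $E$ has rank $n$ on a curve, a general subspace $V\subset H^0(E)$ of dimension $n+1$ still generates $E$: for fixed $x$ the $(n+1)$-dimensional subspaces whose image in $E_x$ has dimension $<n$ form a locus of codimension $2$ in the Grassmannian, and letting $x$ vary over the ($1$-dimensional) curve leaves only a proper closed bad locus. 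Thus $(E,V)$ is a generated coherent system of type $(n,d,n+1)$ with $E$ semistable, and Proposition \ref{lem1} gives $(E,V)\in U^s(n,d,n+1)$. When moreover $\gcd(n,d)=1$ we may take $E$ stable, and then $(E,V)\in U(n,d,n+1)$ as well.

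For the converse directions, suppose $(E,V)\in U^s(n,d,n+1)$. Then $E$ is semistable (as noted in the text preceding Proposition \ref{lem1}) and $h^0(E)\ge\dim V=n+1$. By the genus-$1$ computation above this is impossible unless $d>0$, in which case $d=h^0(E)\ge n+1$; this proves that $U^s(n,d,n+1)\ne\emptyset$ implies $d\ge n+1$. If in addition $(E,V)\in U(n,d,n+1)$, then $E$ is stable, and stable bundles of rank $n$ and degree $d$ exist on an elliptic curve only when $\gcd(n,d)=1$; combined with the inclusion $U(n,d,n+1)\subset U^s(n,d,n+1)$ this yields the necessary condition $d\ge n+1$ and $\gcd(n,d)=1$.

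The only steps requiring care are the two genus-$1$ inputs of the first paragraph---the identity $h^0(E)=d$ and the global generation of a semistable bundle of slope $>1$---and the verification that a general $(n+1)$-dimensional subspace of $H^0(E)$ generates $E$; all three are routine once one uses the triviality of $K$ together with a dimension count, and no flip analysis is needed. I expect the main (though still mild) obstacle to be the construction of the semistable bundle $E$ in the non-coprime case, where one must exhibit a direct sum of stable bundles of slope $d/n$ of total rank $n$. This is available because the slope $d/n$ has denominator $n/\gcd(n,d)$ in lowest terms, so stable bundles of that slope and of rank $n/\gcd(n,d)$ exist, and taking $\gcd(n,d)$ copies gives the required bundle of rank $n$ and degree $d$.
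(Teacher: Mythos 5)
Your proof is correct, but it takes a genuinely different route from the paper's for the first bullet: the paper simply quotes the main theorem of \cite{ln2} together with \cite[Remark 6.3]{ln2}, i.e.\ it outsources the non-emptiness of $U^s(n,d,n+1)$ to the full classification of coherent systems on elliptic curves, whereas you give a short self-contained construction. Your argument exploits the triviality of $K$ to get $h^0(E)=d$ and $h^1(E(-x))=0$ for semistable $E$ of slope $>1$, hence global generation, then a Schubert-type codimension-$2$ count to see that a general $(n+1)$-dimensional $V\subset H^0(E)$ still generates, and finally Proposition \ref{lem1}; the converse follows from the bound $h^0(E)\le\max\{d,n\}$ for semistable $E$ on a genus-$1$ curve. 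All four implications are covered, and the second bullet is handled exactly as in the paper via Atiyah's classification (stable bundles exist iff $\gcd(n,d)=1$, and then all semistable bundles are stable). The one point worth flagging is your reliance on Proposition \ref{lem1}, which is quoted from \cite{le} under that paper's standing hypothesis $g\ge2$; it does remain valid in genus $1$, since its proof only uses that the kernel of $V\otimes\mathcal{O}\to E$ is a line bundle of negative degree together with the semistability of $E$, but you should say so explicitly rather than cite it silently outside its stated range. What your approach buys is independence from the elliptic-curve classification of coherent systems in \cite{ln2}; what the paper's citation buys is brevity and, implicitly, the stronger structural information contained in that classification.
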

\begin{proof}
The first part follows from the main theorem of \cite{ln2} and \cite[Remark 6.3]{ln2}.
For the second part, recall that, on an elliptic curve, stable
bundles exist if and only if $(n,d)=1$, and, in this case, all
semistable bundles are stable.
\end{proof}
The condition $d\ge n+1$ here is precisely equivalent to $\beta\ge0$.

For $g=2$, note first that the case $g=n=2$, $d=4$ is a
genuine exception in Proposition
\ref{th1} (see \cite[Lemma 6.6(1)]{le}). More generally, if $E$ is any bundle of rank $n\ge2$ and
degree $2n$ with $h^0(E)\ge n+1$ on a curve of genus $2$, then $E$
cannot be stable. In fact, by Riemann-Roch, we have $h^1(E)\ge1$,
so there exists a non-zero homomorphism $E\rightarrow K$, which
immediately contradicts stability. There do exist semistable
bundles with $h^0(E)\ge n+1$, which can be constructed as in the proof of Proposition \ref{prop2} or by using
sequences
$$0\rightarrow E^*\rightarrow V\otimes{\mathcal O}\rightarrow
{\mathcal L}\rightarrow0$$ with $\deg {\mathcal L}=2n$ and $V$ a subspace of $H^0({\mathcal L})$ of
dimension $n+1$ which generates ${\mathcal L}$; the coherent system $(E,V^*)$
is then $\alpha$-stable for all $\alpha>0$. We deduce
\begin{theorem}\label{genus2}
Let $X$ be a curve of genus 2 and $n\ge2$. Then
\begin{itemize}
\item $U^s(n,d,n+1)\ne\emptyset$ if and only if $d\ge n+2$ (or equivalently $\beta\ge0$);
\item $U(n,d,n+1)\ne\emptyset$ if and only if $d\ge n+2$, $d\ne 2n$.
\end{itemize}\end{theorem}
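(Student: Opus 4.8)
The plan is to prove the two necessity statements together and the two sufficiency statements separately, with the sufficiency for $U(n,d,n+1)$ reducing to that for $U^s(n,d,n+1)$ whenever $\gcd(n,d)=1$. First, on a genus $2$ curve $\beta=2+(n+1)(d-n-2)$, so $\beta\ge0$ is equivalent to $d\ge n+2$ and in fact $\beta>0$ throughout. If $U^s(n,d,n+1)$ or $U(n,d,n+1)$ is non-empty then it lies in some $G(\alpha)\ne\emptyset$, so Proposition \ref{prop1} (equivalently Theorem \ref{t1}(1)) gives $\beta\ge0$, i.e. $d\ge n+2$. The extra condition $d\ne2n$ for $U(n,d,n+1)$ is the Riemann--Roch argument recorded just before the theorem: a stable $E$ of rank $n$ and degree $2n$ has $h^1(E)\ge1$, hence a non-zero map $E\to K$ with image a quotient line bundle of degree $\le2=\mu(E)$, contradicting stability; thus $U(n,2n,n+1)=\emptyset$.

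For $U^s(n,d,n+1)\ne\emptyset$ I would invoke Proposition \ref{lem1} and produce, for each $d\ge n+2$, a generated $(E,V)$ of type $(n,d,n+1)$ with $E$ semistable. The degree $d=2n$ is already available with $E=K^{\oplus n}$ (semistable, globally generated, $h^0=2n\ge n+1$) and a generic generating $V$ of dimension $n+1$. Since tensoring by ${\mathcal O}(p)$ preserves semistability and $\alpha$-stability while raising the degree by $n$ (Remark \ref{R8}), it then suffices to settle one degree in each residue class, i.e. each $d$ with $n+2\le d\le2n+1$. For these I would use the dual construction $0\to E^*\to V\otimes{\mathcal O}\to{\mathcal L}\to0$ with $\deg{\mathcal L}=d$ and $V$ a generic $(n+1)$-dimensional subspace generating ${\mathcal L}$: dualizing shows $(E,V^*)$ is generated with $H^0(E^*)=0$, so $(E,V^*)\in G_L$ by Theorem \ref{t1}(3), and semistability of $E$ is semistability of the kernel bundle $M_{V,{\mathcal L}}$. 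Because $\beta>0$ and $G_L$ is irreducible (Theorem \ref{t1}(4)), semistability of $E$ is open on $G_L$, so one semistable example in each of these finitely many degrees is enough.

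For $U(n,d,n+1)$ I would split on $\gcd(n,d)$. If $\gcd(n,d)=1$ then a semistable bundle of rank $n$ and degree $d$ is automatically stable, so the element of $U^s(n,d,n+1)$ just constructed already lies in $U(n,d,n+1)$; moreover $\gcd(n,d)=1$ forces $d\ne2n$, so every admissible coprime degree is covered. If $\gcd(n,d)>1$ and $d\ne2n$, a genuinely stable $E$ is needed: Proposition \ref{prop2} gives $d\ge3n+1$, Proposition \ref{th1} gives $d=n+2$ (vacuous for $n=2$, where $n+2=2n$ is excluded), and the intermediate non-coprime degrees $n+3\le d\le3n$ with $d\ne2n$ remain. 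The simplest of these is $d=3n$, the only survivor when $n$ is prime, which I would realise by a generic stable bundle of slope $2g-1=3$ (having $h^0=2n\ge n+1$ and, I expect, globally generated); for composite $n$ the further non-coprime degrees would be handled by elementary transformations (Proposition \ref{prop4}) and by extension-of-coherent-systems arguments patterned on Propositions \ref{prop31}--\ref{prop33}.

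The hard part will be exactly these intermediate, non-coprime degrees for $U(n,d,n+1)$, where I must upgrade semistability to stability of $E$. The generic element of $G_L$ is only known a priori to be semistable, strictly semistable deformations are not cheaply excluded, and the general technique of Propositions \ref{prop5}--\ref{prop7} is essentially vacuous here, since its range $d<g+n+\frac{g}{n-1}=n+2+\frac{2}{n-1}$ barely exceeds $d=n+2$ on a genus $2$ curve. The argument must instead exploit genus-$2$ features---the sharpness of Clifford's theorem and the fact that $K$ is the unique $g^1_2$---to control the destabilising quotients in the sequences (\ref{eqn2'}), and must establish global generation of the generic stable bundle of slope close to $2g-1=3$; this last point, together with the semistability of $M_{V,{\mathcal L}}$ needed for $U^s$ in the window $n+2\le d\le2n+1$, is where the substantive work concentrates.
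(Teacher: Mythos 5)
Your skeleton matches the paper's at several points: necessity of $\beta\ge0$ via Proposition \ref{prop1}, exclusion of $d=2n$ from $U(n,d,n+1)$ by the Riemann--Roch argument producing a non-zero map $E\to K$, the case $d=2n$ for $U^s$ via $K^{\oplus n}$ (which is exactly what Proposition \ref{prop2} produces at $g=2$), $d\ge3n$ via global generation of stable bundles of slope $\ge 2g-1$, and reduction of the remaining degrees by twisting (Remark \ref{R8}). The genuine gap is the window $n+2\le d\le 2n+1$, which you explicitly leave open: you reduce everything to producing one semistable (resp.\ stable) generated example in each of these degrees and never produce it, and the fallback tools you name cannot do the job. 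Proposition \ref{prop4} only yields degrees $\equiv\pm1\bmod n$, hence only degrees coprime to $n$, so it cannot reach the non-coprime degrees in this range; and, as you yourself observe, Propositions \ref{prop5}--\ref{prop7} are vacuous at $g=2$. ``Openness of semistability on the irreducible $G_L$'' is not a substitute for exhibiting an element on which it holds.

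The paper closes the window with two inputs absent from your proposal. For $n+2\le d\le 2n-1$ (slope $<2$, including all non-coprime degrees there) it cites \cite[Theorem 5.5]{bgmmn}, which gives $U(n,d,n+1)\ne\emptyset$ outright; Remark \ref{R8} then also covers $2n+2\le d\le 3n-1$ with no gcd case division at all. For $d=2n+1$ it argues directly: every stable $E$ of rank $n$ and degree $2n+1$ has $h^0(E)\ge n+1$ by Riemann--Roch, and the generic such $E$ is globally generated because $h^1(E(-x))=h^0(E(-x)^*\otimes K)$ and the Brill--Noether locus $B(n,n-1,1)$ has codimension $2$ in $M(n,n-1)$ (by \cite{sun} or \cite{bgn}), so the locus of bundles failing generation at some point of the one-dimensional family $\{x\in X\}$ is proper in $M(n,2n+1)$; Proposition \ref{lem1} then places $(E,V)$ in $U(n,2n+1,n+1)$. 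Unless you either reprove the small-slope existence result or supply explicit generated semistable (and, off the coprime degrees, stable) examples for each $d\in[n+2,2n-1]$, your argument establishes neither bullet of the theorem in that range.
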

\begin{proof}
We have $U(n,d,n+1)\ne\emptyset$ in the following cases:
\begin{itemize}
\item $d\ge3n$ by \cite[Proposition 2.6]{le};
\item $d=n+2,\ldots,2n-1$ by \cite[Theorem 5.5]{bgmmn};
\item $d=2n+2,\ldots,3n-1$ by Remark \ref{R8}.
\end{itemize}
Moreover $U^s(n,2n,n+1)\ne\emptyset$ by Proposition \ref{prop2}.
It remains to prove
\begin{itemize}
\item[(i)] $U(n,2n,n+1)=\emptyset$;
\item[(ii)] $U(n,2n+1,n+1)\ne\emptyset$.
\end{itemize}

For (i), we have already remarked that a vector bundle $E$ of rank $n$ and degree $2n$ with $h^0(E)\ge n+1$
cannot be stable (see also \cite[Th\'eor\`eme 2]{mer2}). 

For (ii),  every stable bundle $E$ of rank $n$ and degree $2n+1$ has $h^0(E)\ge n+1$. If we can prove that there exists such a bundle which is generated, we can choose a subspace $V$ of $H^0(E)$ of dimension $n+1$ such that $(E,V)$ is generated. Then $(E,V)\in U(n,d,n+1)$ by Proposition \ref{lem1}.

To show that $E$ is generated, we need to prove that $h^1(E(-x))=0$ for all $x\in X$. Now $E(-x)$ is stable of degree $n+1$ and $E(-x)^*\otimes K$ is stable of degree $n-1$. We consider the Brill-Noether locus $B(n,n-1,1)$. By \cite{sun} or \cite{bgn}, this locus has dimension $\beta(n,n-1,1)$ and hence codimension
$$1-(n-1)+n(g-1)=2$$
in $M(n,n-1)$. It follows that the generic $E\in M(n,2n+1)$ has
$$h^1(E(-x))=h^0(E(-x)^*\otimes K)=0$$
for all $x\in X$ as required.

This completes the proof of the theorem.
\end{proof}

\begin{theorem}\label{genus3}
Let $X$ be a Petri curve of genus $3$ and $n\ge2$. Then $U(n,d,n+1)\ne\emptyset$ if $\beta\ge0$, except possibly when $n\ge5$, $d=2n+2$.
\end{theorem}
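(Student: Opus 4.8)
The plan is to reduce everything to showing that a general element of $G_L(n,d,n+1)$ has underlying bundle stable, after first disposing of the accessible degrees. For $n\le4$ the statement is precisely Theorems \ref{th2}, \ref{th3} and \ref{th4}, so I assume $n\ge5$; then $\beta\ge0\Leftrightarrow d\ge n+3$. The degree $d=n+3$ is covered by Proposition \ref{th1}, whose range collapses to this single value because $[\frac g{n+1}]=0$; the range $d\ge3n+1$ is Proposition \ref{prop2} (with $g$ odd, $d_1=\frac{n(g+3)}2+1=3n+1$); and $d=2n+3$ follows from $d=n+3$ by Remark \ref{R8}. It therefore remains to treat $n+4\le d\le 3n$ with $d\ne2n+2,2n+3$.

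For these degrees I would take a general $(E,V)\in G_L(n,d,n+1)$ (non-empty since $\beta\ge0$, by Theorem \ref{t1}) and try to show $E$ is stable, in which case $(E,V)\in U(n,d,n+1)$. If $E$ is not stable there is a destabilising sequence (\ref{eqn2'}) with $E_2$ stable, $(E_2,V_2)$ generated and $k_2\ge n_2+1$, and I would bound the number of parameters on which such $(E,V)$ depend using Lemma \ref{alpha+} and Remark \ref{rmk100} (which gives $\mathbb H^0_{21}=0$). The extensions with $n_1=k_1=1$ (type (\ref{eqn3})) are already controlled by Proposition \ref{prop6}, valid for all $d>g+n$. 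The remaining types — those of the form (\ref{eqn4}) with $n_1=1$, $k_1=0$, and those with $n_2\le n-2$, which for $g=3$ fall outside the ranges of Propositions \ref{prop5} and \ref{prop7} — would have to be handled by direct parameter counts in the spirit of Propositions \ref{prop32} and \ref{prop33}: for each admissible type $(n_2,d_2,k_2)$ one computes $C_{12}$ from (\ref{c12}) and estimates $\dim\mathbb H^2_{21}=h^0(E_1^*\otimes N_2\otimes K)$ (via Clifford's theorem or \cite[Theorem 2.1]{bgn}), and shows $C_{12}>\dim\mathbb H^2_{21}$, so that the destabilising locus has dimension $<\beta$.

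The main obstacle is exactly this last inequality. The admissible destabilising quotients are constrained by $1+\frac1{n_2}(g-[\frac g{n_2+1}])\le\mu(E_2)\le\frac dn$, so there are only finitely many types to examine, but as $d$ grows the slope $\mu(E_2)$ is allowed to rise and the estimate for $C_{12}-\dim\mathbb H^2_{21}$ tightens; I expect it to remain positive throughout $n+4\le d\le3n$ except at the single value $d=2n+2$, where the numerics degenerate and the argument breaks down — this is the source of the stated exception. Verifying that $2n+2$ is the only failure, i.e. carrying out the case analysis of the types $(n_2,d_2,k_2)$ uniformly in $n$ and checking the inequality at each, is the delicate and decisive step, and the boundary behaviour at $d=2n+2$ is what one must examine most carefully.
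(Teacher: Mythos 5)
Your reductions for $d=n+3$ (via Proposition \ref{th1}), $d\ge3n+1$ (via Proposition \ref{prop2}) and $d=2n+3$ (via Remark \ref{R8}) agree with the paper, but the decisive middle range is not actually proved: for $n+4\le d\le 3n$, $d\ne 2n+2,2n+3$, you only outline a parameter count and ``expect'' the inequality $C_{12}>\dim\mathbb{H}^2_{21}$ to hold for every admissible destabilising type. The paper does not do this at all. It invokes the small-slope existence theorem \cite[Theorem 5.4]{bgmmn} to get $U(n,d,n+1)\ne\emptyset$ for the whole block $d=n+3,\ldots,2n$, then gets $d=2n+1$ from $d=2n$ by Proposition \ref{prop4} (note $2n+1\equiv 1\bmod n$), and $d=2n+3,\ldots,3n$ from $d=n+3,\ldots,2n$ by Remark \ref{R8}. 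Without the citation of \cite[Theorem 5.4]{bgmmn} (or a genuine substitute) you have no handle on $n+4\le d\le 2n$, and hence none on $2n+4\le d\le 3n$ either.

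Your proposed substitute is a genuine gap rather than a different proof, for two reasons. First, for $g=3$ and $n\ge5$ the hypotheses of Propositions \ref{prop5} and \ref{prop7} ($d<g+n+\frac{g}{n-1}=n+3+\frac{3}{n-1}$, and (\ref{eqn16})) fail for all $d\ge n+4$, so essentially every destabilising type $(n_2,d_2,k_2)$ with $2\le n_2\le n-1$ and $k_2\ge n_2+1$ must be handled from scratch, uniformly in $n$; nothing in your sketch shows the count closes. Second, to apply Lemma \ref{alpha+} you must also bound the dimensions of the families in which $(E_1,V_1)$ and $(E_2,V_2)$ move (and bound $k_2$, e.g.\ via \cite{re}); the paper needs the full force of Propositions \ref{prop32} and \ref{prop33} to do this even for $n=4$ at isolated degrees, which is strong evidence that a uniform-in-$n$ version is not routine. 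Finally, your diagnosis of the exception is off: $d=2n+2$ is excepted not because a parameter count provably degenerates there, but because it is the one degree reached by none of the paper's tools ($2n+2>2n$, $2n+2\not\equiv\pm1\bmod n$ for $n\ge5$, $2n+2<3n+1$, and $d=n+2$ has $\beta<0$ so tensoring cannot reach it).
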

\begin{proof}
For $n=2,3,4$, this has already been proved. For $n\ge5$, we have
$U(n,d,n+1)\ne\emptyset$ in the following cases:
\begin{itemize}
\item $d\ge3n+1$ by Proposition \ref{prop2};
\item $d=n+3,\ldots,2n$ by \cite[Theorem 5.4]{bgmmn};
\item $d=2n+1$ by Proposition \ref{prop4};
\item $d=2n+3,\ldots 3n$ by Remark \ref{R8}.
\end{itemize}
\end{proof}

\begin{remark}\label{rmk:te2}\begin{em}
For general $X$ (but not necessarily for all Petri $X$), the exception can be removed using Teixidor's degeneration methods \cite{te2}.
\end{em}\end{remark}

\begin{remark}\begin{em}\label{rk20}
For $g=4,5$ and $n\ge5$, a similar argument works with the following possible exceptions
\begin{itemize}
\item $g=4$, $d=2n+2,2n+3,3n+2,3n+3$;
\item $g=5$, $n=5$, $d=12,13,17,18$;
\item $g=5$, $n\ge6$, $d=2n+2,2n+3,2n+4,3n+2,3n+3,3n+4$.
\end{itemize}
For general $X$, one can use Teixidor's result to rule out some of the exceptions.
\end{em}\end{remark}

\section{Applications to Brill-Noether theory and dual spans}\label{apli}

We recall from section \ref{prelim} that the {\em Brill-Noether locus} $B(n,d,k)$ and $\widetilde{B}(n,d,k)$
are defined by
$$B(n,d,k)=\{ E\in M(n,d)| h^0(E)\geq k\}$$
and
$$\widetilde{B}(n,d,k)=\{ [E]\in \widetilde{M}(n,d)| h^0(\mbox{gr}(E))\geq k\},$$
It follows that the  formula $(E,V)\mapsto[E]$ defines a morphism
$$\psi : G_0(n,d,k)\rightarrow \widetilde{B}(n,d,k),$$
whose image contains $B(n,d,k)$.

The following theorem, which is essentially a restatement of \cite[Theorem 11.4 and Corollary 11.5]{bomn} for the case $k=n+1$, is true for any smooth curve; we state it in a very general and formal way to make it applicable in a wide variety of situations.

\begin{theorem}\label{bn} Suppose that $B(n,d,n+1)\ne\emptyset$. Then
\begin{itemize}
\item[(1)] $\psi$ is one-to-one over $B(n,d,n+1)-B(n,d,n+2)$;
\item[(2)] if  $G_0(n,d,n+1)$ is irreducible, then $B(n,d,n+1)$ is irreducible;
\item [(3)] if $\beta(n,d,n+1)\le n^2(g-1)$ and $G_0(n,d,n+1)$ is smooth and irreducible, then
$${\rm Sing}B(n,d,n+1)=B(n,d,n+2)$$
and $G_0(n,d,n+1)$ is a desingularisation of the closure $\overline{B}(n,d,n+1)$ of $B(n,d,n+1)$ in $\widetilde{M}(n,d)$.
\end{itemize}
\end{theorem}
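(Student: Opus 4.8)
The plan is to obtain Theorem \ref{bn} by specialising \cite[Theorem 11.4 and Corollary 11.5]{bomn} to the case $k=n+1$; since the theorem is stated for an arbitrary smooth curve, no Petri hypothesis enters, and the real content is to translate the general assertions of \cite[section 11]{bomn} into the three concrete statements above, taking advantage of the fact that $k=n+1$ makes the fibres of $\psi$ especially transparent. Throughout I would abbreviate $B:=B(n,d,n+1)$ and $B':=B(n,d,n+2)$, and work over the stable locus $M(n,d)$, which is open in $\widetilde M(n,d)$ and in which $B$ lives.

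For (1), the point is that a bundle $E\in B\setminus B'$ is stable with $h^0(E)=n+1$ exactly, so the only $(n+1)$-dimensional subspace of $H^0(E)$ is $V=H^0(E)$ itself; hence $\psi^{-1}([E])$ contains at most the single system $(E,H^0(E))$. I would then verify that this system really lies in $G_0$. For any proper subsystem $(F,W)$ stability of $E$ gives $\mu(F)<\mu(E)$, and since $dn_F-d_Fn$ is a positive integer this gap is bounded below by $\frac1{n(n-1)}$; meanwhile $\frac{\dim W}{n_F}-\frac{k}{n}$ is bounded above (because $\dim W\le n+1$). Thus $\mu_\alpha(F,W)<\mu_\alpha(E,V)$ uniformly for all sufficiently small $\alpha>0$, so $(E,H^0(E))\in G_0$ and $\psi$ is one-to-one over $B\setminus B'$.

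For (2), I would first record the clean identity $B=\psi(G_0)\cap M(n,d)$: indeed, if $E$ is stable and $[E]=\psi(E',V')$ for some $(E',V')\in G_0$, then $E'\cong E$ and the inclusion $V'\subset H^0(E)$ forces $h^0(E)\ge n+1$, so $E\in B$; the reverse inclusion is the argument of (1). Since $M(n,d)$ is open in $\widetilde M(n,d)$, and $\psi(G_0)$ is irreducible whenever $G_0$ is (being the image of an irreducible variety), the open subset $\psi(G_0)\cap M(n,d)=B$ is irreducible.

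For (3), the hypothesis $\beta(n,d,n+1)\le n^2(g-1)=\dim M(n,d)-1$ guarantees that $\overline B$ is a \emph{proper} subvariety of $\widetilde M(n,d)$ (its dimension equals $\dim G_0=\beta$ by the generic bijectivity of (1)), so that the word ``desingularisation'' is meaningful. Following \cite[section 11]{bomn}, I would pass to the projective variety $\widetilde G_0$, on which $(E,V)\mapsto[E]$ extends to a proper morphism onto $\overline B$; by (1) and the smoothness of $G_0$ this morphism is birational with smooth source, an isomorphism over $B\setminus B'$, whereas over $E\in B'$ the fibre is the positive-dimensional Grassmannian of $(n+1)$-planes in $H^0(E)$. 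Consequently $B\setminus B'$ is smooth and ${\rm Sing}\,B\subseteq B'$. The remaining, and genuinely hard, step is the reverse inclusion ${\rm Sing}\,B\supseteq B'$: this requires the local Zariski-tangent and obstruction analysis of \cite{bomn}, carried out through the Petri map, showing that $\overline B$ is singular at every point of $B'$. Verifying the properness of the extended $\psi$ and pinning down ${\rm Sing}\,B=B'$ rather than merely ${\rm Sing}\,B\subseteq B'$ is where the cited results of \cite{bomn} do the essential work, and I would expect this to be the main obstacle.
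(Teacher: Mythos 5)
Your proposal is correct and follows essentially the same route as the paper, which simply records that (1) is obvious, that (2) follows from (1) together with the fact that $B(n,d,n+1)$ is Zariski-open in $\psi(G_0(n,d,n+1))$, and that (3) is \cite[Corollary 11.5]{bomn}; your elaborations of (1) and (2) are exactly the intended arguments. The one loose point is in (3), where set-theoretic bijectivity of a morphism from a smooth source over $B(n,d,n+1)\setminus B(n,d,n+2)$ does not by itself yield smoothness of that locus (one needs the tangent-space identification, as for the normalisation of a cuspidal curve), but you correctly defer that analysis, as the paper does, to \cite{bomn}.
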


\begin{proof} (1) is obvious.

(2) follows from (1) and the fact that $B(n,d,n+1)$ is a Zariski-open subset of $\psi(G_0(n,d,n+1)$. [Note that the hypothesis $\beta(n,d,n+1)\le n^2(g-1)$ of \cite[Conditions 11.3]{bomn} is not needed here.]

(3) follows from \cite[Corollary 11.5]{bomn}.
\end{proof}

Of course, if $U(n,d,n+1)\ne\emptyset$, then $B(n,d,n+1)\ne\emptyset$. Thus we have many instances in this paper for which $B(n,d,n+1)\ne\emptyset$. We shall not list all of them as we shall be stating a more specific result later. For the time being, we note the following two corollaries. The first is a slightly extended version of \cite[Corollary 4.5]{le}, the second is new.

\begin{corollary}\label{cor:bn1} Suppose that $X$ is a Petri curve, $g+n-\left[\frac{g}{n+1}\right]\le d\le g+n$ and $(g,n)\ne(2,2)$. Then
\begin{itemize}
\item[(1)] $B(n,d,n+1)$ is irreducible of dimension $\beta(n,d,n+1)$ and smooth outside $B(n,d,n+2)$;
\item[(2)] $G_L(n,d,n+1)$ is a desingularisation of $\overline{B}(n,d,n+1)$;
\item[(3)] if either $d<g+n$ or $d=g+n$ and $n\not\,\mid g$, $B(n,d,n+1)$ is projective and $G_L(n,d,n+1)$ is a desingularisation of $B(n,d,n+1)$.
\end{itemize}\end{corollary}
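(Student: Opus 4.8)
The plan is to deduce all three parts from the large-$\alpha$ description of Theorem \ref{t1} together with the general machinery of Theorem \ref{bn}, the only genuinely new work being the projectivity assertion in (3). The starting observation is that, since $d\le g+n$, the $\alpha$-stability condition for type $(n,d,n+1)$ is independent of $\alpha>0$ (\cite[Theorem 2]{le}), so that $G_0(n,d,n+1)=G_L(n,d,n+1)$; moreover, by Remark \ref{r1}, the hypothesis $d\ge g+n-\left[\frac{g}{n+1}\right]$ is exactly $\beta:=\beta(n,d,n+1)\ge0$. If $\beta=0$ then $G_L$ is a finite reduced set (Theorem \ref{t1}(5)) which, as the argument below shows, is identified by $\psi$ with the finite set $B(n,d,n+1)$; so I may assume $\beta>0$. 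In that case Theorem \ref{t1}(4) gives that $G_0=G_L$ is smooth and irreducible of dimension $\beta$, while Proposition \ref{th1} (applicable since $(g,n)\ne(2,2)$) gives $U(n,d,n+1)\ne\emptyset$ and hence $B(n,d,n+1)\ne\emptyset$.

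Granting this, parts (1) and (2) are immediate applications of Theorem \ref{bn}. The numerical hypothesis of Theorem \ref{bn}(3) holds because $\beta$ is increasing in $d$ and equals $g$ at $d=g+n$, so $\beta\le g\le n^2(g-1)$ for $n\ge2$, $g\ge2$. Then Theorem \ref{bn}(2) gives irreducibility of $B(n,d,n+1)$, and Theorem \ref{bn}(3) gives both $\mathrm{Sing}\,B(n,d,n+1)=B(n,d,n+2)$ (i.e. smoothness away from $B(n,d,n+2)$) and the fact that $G_0=G_L$ is a desingularisation of $\overline{B}(n,d,n+1)$, which is (2). The dimension count in (1) follows since $\psi$ is one-to-one, hence birational, over the dense open subset $B(n,d,n+1)-B(n,d,n+2)$ (Theorem \ref{bn}(1)), so $\dim B(n,d,n+1)=\dim G_L=\beta$.

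For (3) I would show that the extra hypothesis forces $\overline{B}(n,d,n+1)=B(n,d,n+1)$. Since $\psi$ is surjective onto $\overline{B}(n,d,n+1)$ (by (2)) and sends $(E,V)$ to $[E]$ (so in particular $E$ is semistable for every $(E,V)\in G_0=G_L$), we have $\overline{B}(n,d,n+1)=B(n,d,n+1)$ as soon as every such $E$ is in fact stable, that is, $G_L=U(n,d,n+1)$. To prove this under the hypothesis of (3), suppose $(E,V)\in G_L$ with $E$ not stable; being semistable, $E$ then has a stable quotient bundle $E_2$ with $\mu(E_2)=\mu(E)=d/n$. By Theorem \ref{t1}(3) and the last part of Lemma \ref{l1}, the induced quotient system $(E_2,V_2)$ is generically generated with $H^0(E_2'^*)=0$, so $d_2\ge g+n_2-\left[\frac{g}{n_2+1}\right]$ and hence
$$\frac{d}{n}=\mu(E_2)\ge 1+\frac1{n_2}\left(g-\left[\frac{g}{n_2+1}\right]\right).$$
Combining this with $\frac{d}{n}\le 1+\frac{g}{n}$ and the monotonicity of Lemma \ref{l2} (the weakest bound being at $n_2=n-1$) forces $\left[\frac{g}{n}\right]=\frac{g}{n}$ and equality throughout, i.e. $n\mid g$ and $d=g+n$ — precisely the case excluded in (3). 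Therefore, under the hypothesis of (3), $G_L=U(n,d,n+1)$, so $\overline{B}(n,d,n+1)=B(n,d,n+1)$; being the closure of $B(n,d,n+1)$ in the projective variety $\widetilde{M}(n,d)$, this set is projective, and the desingularisation of (2) becomes a desingularisation of $B(n,d,n+1)$ itself.

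The main obstacle is the stability argument in (3): one must rule out all destabilising quotients of a semistable $E$ and show that the only surviving possibility is the borderline case $d=g+n$, $n\mid g$. This is exactly where the sharp inequalities of Lemmas \ref{l1} and \ref{l2} are needed, and it is the ingredient that extends \cite[Corollary 4.5]{le}; parts (1) and (2) are then essentially bookkeeping on top of Theorems \ref{t1} and \ref{bn}.
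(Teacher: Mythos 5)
Your argument is correct and follows the paper's route for parts (1) and (2) almost verbatim: the hypothesis on $d$ gives $G_0(n,d,n+1)=G_L(n,d,n+1)$ (the paper phrases this as $\alpha_l\le0$ plus Theorem \ref{t1}, you phrase it via \cite[Theorem 2]{le} --- same fact), Theorem \ref{t1} gives smoothness and irreducibility of dimension $\beta$, Proposition \ref{th1} gives non-emptiness of $U(n,d,n+1)$ and hence of $B(n,d,n+1)$, and Theorem \ref{bn} does the rest. You are in fact slightly more careful than the paper in explicitly checking the hypothesis $\beta(n,d,n+1)\le n^2(g-1)$ of Theorem \ref{bn}(3), which the paper leaves implicit. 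The genuine difference is in part (3): the paper simply cites \cite[Proposition 3.5]{le} for the statement that $E$ is stable for every $(E,V)\in G_L(n,d,n+1)$ under the extra hypothesis, whereas you re-derive this from scratch --- taking a stable quotient $E_2$ of the same slope as a semistable non-stable $E$, applying the last part of Lemma \ref{l1} together with Theorem \ref{t1}(3) to get $d_2\ge g+n_2-\left[\frac{g}{n_2+1}\right]$, and using the monotonicity of Lemma \ref{l2} to force $d=g+n$ and $n\mid g$, exactly the excluded case. This buys a self-contained proof within the paper's own toolkit (essentially reconstructing the cited external result), at the cost of a little extra length; the conclusion $\psi(G_L(n,d,n+1))=B(n,d,n+1)$ and the projectivity of $B(n,d,n+1)$ as a closed subset of $\widetilde{M}(n,d)$ then follow exactly as in the paper.
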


\begin{proof} The condition on $d$ implies that $\alpha_l\le0$. Hence, by Theorem \ref{t1}, $G_0(n,d,n+1)=G_L(n,d,n+1)$ and is smooth and irreducible of dimension $\beta(n,d,n+1)$. Moreover $U(n,d,n+1)\ne\emptyset$ by Proposition \ref{th1}. (1) and (2) now follow from Theorem \ref{bn}. For (3), we note that, under the stated conditions on $d$, $E$ is stable for every $(E,V)\in G_L(n,d,n+1)$ \cite[Proposition 3.5]{le}; hence $\psi(G_L(n,d,n+1))=B(n,d,n+1)$.
\end{proof}

\begin{remark}\label{rmk:bn1}\begin{em}
When $g=n=2$ and $d=4$, $B(2,4,3)=\emptyset$ by \cite[Lemma 6.6]{le}, but $G_L(2,4,3)\ne\emptyset$. In this case, the image of $\psi$ is contained in $\widetilde{M}(2,4)\setminus M(2,4)$.
\end{em}\end{remark}

\begin{corollary}\label{cor:bn2} Suppose that $X$ is a Petri curve and that all the flip loci for coherent systems of type $(n,d,n+1)$ have dimension $\le\beta(n,d,n+1)-1$. If $B(n,d,n+1)\ne\emptyset$, then
\begin{itemize}
\item $B(n,d,n+1)$ is irreducible;
\item $B(n,d,n+1)$ is smooth of dimension $\beta(n,d,n+1)$ at $E$ whenever $E$ is generically generated and $h^0(E)=n+1$.
\end{itemize}\end{corollary}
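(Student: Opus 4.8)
The plan is to prove the two assertions separately: I would deduce irreducibility of $B(n,d,n+1)$ from irreducibility of the small-$\alpha$ moduli space $G_0:=G_0(n,d,n+1)$ via Theorem \ref{bn}(2), and deduce smoothness of $B(n,d,n+1)$ at a generically generated $E$ with $h^0(E)=n+1$ from smoothness of $G_0$ at the associated coherent system.

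First I would establish that $G_0$ is irreducible (I assume $\beta>0$; for $\beta=0$ the locus $B(n,d,n+1)$ is a finite reduced set and smoothness is automatic). By Theorem \ref{t1}(4), $G_L$ is irreducible of dimension $\beta$. I would then run a descending induction over the critical values $\alpha_L>\cdots>\alpha_1>\alpha_0=0$, using the flip relation (\ref{eqn:iso}), namely $G_i-G_i^+=G_{i-1}-G_i^-$, together with the hypothesis that every flip locus has dimension $\le\beta-1$ and the fact (Proposition \ref{e}) that every irreducible component of each $G_i$ has dimension $\ge\beta$. Concretely, if $G_i$ is irreducible of dimension $\beta$, then $G_i-G_i^+$ is a dense open subset (since $G_i^+$ is closed of dimension $\le\beta-1$), hence irreducible of dimension $\beta$; as it coincides with the open subset $G_{i-1}-G_i^-$ of $G_{i-1}$, and since $\dim G_i^-\le\beta-1$ forces every component of $G_{i-1}$ to meet $G_{i-1}-G_i^-$ (a component contained in $G_i^-$ would have dimension $\le\beta-1<\beta$), the irreducible set $G_{i-1}-G_i^-$ is dense in $G_{i-1}$, so $G_{i-1}$ is irreducible. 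Starting from $G_L$ this yields irreducibility of $G_0$, and Theorem \ref{bn}(2) then gives irreducibility of $B(n,d,n+1)$.

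For the smoothness statement, let $E$ be generically generated with $h^0(E)=n+1$; since $E\in B(n,d,n+1)$ it is stable, so with $V:=H^0(E)$ the pair $(E,V)$ is $\alpha$-stable for small $\alpha>0$, i.e. $(E,V)\in G_0$. Because $h^0(E)=n+1$ we have $E\notin B(n,d,n+2)$, so by Theorem \ref{bn}(1) the morphism $\psi$ is injective over $E$ and, as $V=H^0(E)$ is forced, $\psi$ induces an isomorphism of Zariski tangent spaces $T_{(E,V)}G_0\xrightarrow{\sim}T_EB(n,d,n+1)$. Since $G_0$ at $(E,V)$ and $B(n,d,n+1)$ at $E$ both have local dimension $\ge\beta$, it follows that $B(n,d,n+1)$ is smooth of dimension $\beta$ at $E$ if and only if $G_0$ is smooth of dimension $\beta$ at $(E,V)$, which by Proposition \ref{e} holds if and only if the Petri map (\ref{pet1}) at $(E,V)$ is injective. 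For $V=H^0(E)$ this Petri map is exactly the classical Brill--Noether--Petri map $H^0(E)\otimes H^0(E^*\otimes K)\to H^0(E\otimes E^*\otimes K)$, so everything reduces to its injectivity.

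The main obstacle is therefore to prove that this Petri map is injective for every generically generated $E$ with $h^0(E)=n+1$ on a Petri curve, including those $E$ for which $(E,V)\notin G_L$, which do occur once the evaluation image $E'$ acquires a trivial direct summand. When $E$ is actually generated, stability gives $H^0(E^*)=0$, hence $(E,V)\in G_L$ by Theorem \ref{t1}(3), and injectivity is precisely the input behind the smoothness of $G_L$ in Theorem \ref{t1}(4). In general I would reduce to the generated sub-coherent-system $(E',V)$: the inclusions $E'\hookrightarrow E$ and $E^*\otimes K\hookrightarrow E'^*\otimes K$ identify the Petri map of $E$ with the restriction to $V\otimes H^0(E^*\otimes K)$ of that of $E'$, so the kernel of the former is contained in the kernel of the latter. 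Using the splitting $(E',V)\cong(\mathcal{O}^s,H^0(\mathcal{O}^s))\oplus(G,W)$ from the proof of Proposition \ref{pe3}, with $H^0(G^*)=0$ and $(G,W)\in G_L$, the Petri map of $E'$ breaks into blocks governed by the (injective) Petri map of $G$ and by multiplication maps involving the trivial summands, the Petri curve hypothesis controlling the line-bundle contributions. The delicate point, where generic generation plays the role that base-point-freeness plays in the classical Petri analysis, is to show that the kernel contributions coming from the trivial summands $\mathcal{O}^s$ cannot lie in the subspace $V\otimes H^0(E^*\otimes K)$, using the stability of $E$ and the fact that $E$ is a nontrivial modification of $E'$. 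This last verification is the crux of the argument.
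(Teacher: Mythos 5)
Your irreducibility argument is correct and is precisely what the paper means when it says the hypotheses make $G_0(n,d,n+1)$ birational to $G_L(n,d,n+1)$: the flip loci are closed of dimension $\le\beta-1$ while every component of every $G_i$ has dimension $\ge\beta$ by Proposition \ref{e}, so irreducibility propagates down from $G_L$ and Theorem \ref{bn}(2) finishes. The problem is the second bullet, where your proposal stops short of a proof. Your reduction to injectivity of the Petri map at $(E,H^0(E))$ is sound, and you are right that the only case not already covered by Theorem \ref{t1}(4) is the one where the evaluation image $E'$ acquires a trivial direct summand, so that $(E,H^0(E))\in G_0\setminus G_L$; such $E$ genuinely occur (by Remark \ref{re6} they are exactly the points of the flip locus $G_L^-$, which is non-empty whenever $\alpha_l>0$, e.g. $(n,d,g)=(2,7,4)$). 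But for that case you only describe a strategy and explicitly defer ``the crux'' --- showing that the kernel contributions involving the sections coming from $\mathcal{O}^s$ vanish. Until that block-by-block analysis is actually carried out (one needs, for instance, that the images of these sections in $H^0(E/\mathrm{sat}(G))$ are generically pointwise independent, and then an appeal to the injectivity of the Petri map of $(G,W)$ coming from the smoothness of $G_L(n-s,\deg G,n+1-s)$), the statement is not proved. A self-identified unproved crux is still a gap.

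For comparison, the paper's proof of this bullet is a one-liner: it asserts that $E$ stable, generically generated with $h^0(E)=n+1$ forces $(E,H^0(E))\in U(n,d,n+1)$, whence smoothness follows from Theorem \ref{t1}(4) together with \cite[Theorem 11.4(iv)]{bomn}. Note, however, that membership in $U(n,d,n+1)\subset G_L$ requires $H^0(E'^*)=0$ by Theorem \ref{t1}(3), which is exactly the condition you are worried about; so the paper's argument silently passes over the case you single out, and your instinct that this case needs a separate Petri-map computation is well-founded. In short: on the smoothness assertion your route is the more careful one and, if completed, would go beyond what the paper actually writes down, but as submitted it does not constitute a proof.
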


\begin{proof} The hypotheses imply that $G_0(n,d,n+1)$ is birational to $G_L(n,d,n+1)$ and is therefore irreducible. Irreducibility of $B(n,d,n+1)$ follows from Theorem \ref{bn}(2). If $E$ is stable, $h^0(E)=n+1$ and $E$ is generically generated, then $(E,H^0(E))\in U(n,d,n+1)$, which is smooth of dimension $\beta(n,d,n+1)$ by Theorem \ref{t1}(4). The result follows from \cite[Theorem 11.4(iv)]{bomn}.
\end{proof}

We know that this corollary has genuine content since the flip loci at $\alpha_l=\alpha_L$ have dimension $\le\beta(n,d,n+1)-1$ (Corollary \ref{flipl+} and Proposition \ref{flipl-}).

We now turn to our second application. Suppose that ${\mathcal L}$ is a generated line bundle of degree $d>0$ and let $V$ be a linear subspace of $H^0({\mathcal L})$ of dimension $n+1$ which generates ${\mathcal L}$ (in other words, $({\mathcal L},V)$ is a generated coherent system of type $(1,d,n+1)$). We have an evaluation sequence
\begin{equation}\label{eqn100}
0\longrightarrow M_{V,{\mathcal L}}\longrightarrow V\otimes{\mathcal O}\longrightarrow {\mathcal L}\longrightarrow0.
\end{equation}
This is also known as the {\em dual span} construction (see \cite{bu}) and has been used in the context of coherent systems in \cite{bomn,le} and also in the proof of Proposition \ref{pe2}. The following is a special case of \cite[Conjecture 2]{bu}. 

\begin{conjecture}\label{conj}
Let $X$ be a Petri curve of genus $g\ge3$. Suppose that 
$\beta:=\beta(1,d,n+1)\ge0$ and that $\mathcal L$ is a general element of $B(1,d,n+1)$ (when $\beta=0$, $\mathcal L$ 
can be any element of the finite set $B(1,d,n+1)$) and let $V$ be a general subspace of $H^0(\mathcal L)$ of dimension $n+1$. Then $M_{V,{\mathcal L}}$ is stable.
\end{conjecture}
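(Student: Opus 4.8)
The plan is to translate the stability of $M_{V,{\mathcal L}}$ into a statement about the large-$\alpha$ moduli space $G_L(n,d,n+1)$ by means of the dual span construction, and thereby reduce the conjecture to the non-emptiness of $U(n,d,n+1)$, which has been established in a wide range of cases in Sections \ref{n=23}--\ref{low}. First I would dualise the evaluation sequence (\ref{eqn100}) to obtain
$$0\longrightarrow{\mathcal L}^*\longrightarrow V^*\otimes{\mathcal O}\longrightarrow M_{V,{\mathcal L}}^*\longrightarrow0.$$
Writing $E:=M_{V,{\mathcal L}}^*$, this exhibits $(E,V^*)$ as a \emph{generated} coherent system of type $(n,d,n+1)$ with $\det E={\mathcal L}$; moreover $H^0(E^*)=H^0(M_{V,{\mathcal L}})=0$ since $V\hookrightarrow H^0({\mathcal L})$. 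Hence $H^0(E'^*)=0$ and, by Theorem \ref{t1}(3), $(E,V^*)\in G_L$. Since a bundle is stable if and only if its dual is stable, $M_{V,{\mathcal L}}$ is stable if and only if $E$ is stable, i.e. if and only if $(E,V^*)\in U(n,d,n+1)$.

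Next I would use that the assignment $({\mathcal L},V)\mapsto(M_{V,{\mathcal L}}^*,V^*)$, with inverse $(E,W)\mapsto(\det E,W^*)$ (cf. Proposition \ref{pe2}), gives a bijection, valid in families, between generated coherent systems of type $(1,d,n+1)$ and the generated locus $\Sigma_0\subset G_L$ of (\ref{sigma0}). As $B(1,d,n+1)$ is irreducible and the generated $(n+1)$-dimensional $V\subset H^0({\mathcal L})$ form an irreducible (open-in-a-Grassmannian) family over it, a general pair $({\mathcal L},V)$ corresponds under this bijection to a general point of $\Sigma_0$. Assume now $\beta>0$, so that $G_L$ is irreducible of dimension $\beta$ by Theorem \ref{t1}(4) and $\Sigma_0$ is dense open in $G_L$. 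If $U(n,d,n+1)\neq\emptyset$, then (being open by Remark \ref{re9}) it is dense in $G_L$, so $U(n,d,n+1)\cap\Sigma_0$ is dense in $\Sigma_0$; therefore the general element of $\Sigma_0$ lies in $U(n,d,n+1)$, and the general $M_{V,{\mathcal L}}$ is stable, as required.

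For $\beta=0$ the statement is stronger, since one must treat \emph{every} ${\mathcal L}$ in the finite set $B(1,d,n+1)$, and density of a single open set no longer suffices. Here, however, $\beta=0$ forces $(n+1)\mid g$ and $d=g+n-\frac{g}{n+1}<g+n$, and in this range $E$ is stable for \emph{every} $(E,V)\in G_L$ by \cite[Proposition 3.5]{le}; equivalently $U(n,d,n+1)=G_L$. Dualising back via the above bijection, every $M_{V,{\mathcal L}}$ (for every ${\mathcal L}\in B(1,d,n+1)$) is stable, which settles the boundary case.

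The hard part is thus entirely concentrated in establishing $U(n,d,n+1)\neq\emptyset$, and this is exactly what makes the earlier sections pay off. Combining the reduction above with Theorems \ref{th2}, \ref{th3} and \ref{th4} (for $n=2,3,4$), Proposition \ref{pt1} (for $g\ge n^2-1$), Proposition \ref{th1}, Proposition \ref{prop2} (large $d$) and the low-genus Theorems \ref{genus1}, \ref{genus2} and \ref{genus3}, one obtains Conjecture \ref{conj} in all the corresponding cases, including several (notably those resting on the extension arguments of Propositions \ref{prop5}, \ref{prop6}, \ref{prop7}, \ref{prop32} and \ref{prop33}) not previously accessible. The only cases of the conjecture that I would expect to remain open are precisely those, such as $g=3$, $n\ge5$, $d=2n+2$ and the exceptions recorded in Remark \ref{rk20}, for which the non-emptiness of $U(n,d,n+1)$ is still undecided for an arbitrary Petri curve.
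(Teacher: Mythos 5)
Your reduction of the conjecture to the non-emptiness of $U(n,d,n+1)$ via the dual span construction is precisely the paper's own route (Proposition \ref{thm100}, combined with Theorem \ref{t1}, Remark \ref{re9} and the openness/irreducibility argument that transfers ``general $({\mathcal L},V)$'' to a general point of $\Sigma_0$, which the paper leaves implicit but you correctly spell out), and your treatment of the $\beta=0$ boundary case via $d<g+n$ and \cite[Proposition 3.5]{le} likewise matches the paper's first listed case. Be aware, though, that this is not a proof of the statement: the statement is a genuine conjecture which neither you nor the paper establishes in full, and your proposal, exactly like the paper, only verifies it in the cases where $U(n,d,n+1)\ne\emptyset$ is known (Theorems \ref{th2}--\ref{th4}, Propositions \ref{th1}, \ref{pt1}, \ref{prop2}, etc.), with the residual open cases being those you and Remark \ref{rk20} identify.
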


This conjecture is related to our results by the following  simple proposition (compare \cite[Theorem 5.11]{bomn}).

\begin{proposition}\label{thm100}
Suppose that $X$ is a Petri curve. The following are equivalent:
\begin{enumerate}
\item there exists a generated coherent system $({\mathcal L},V)$ of type $(1,d,n+1)$ with $M_{V,{\mathcal L}}$ stable;

\item $U(n,d,n+1)\ne\emptyset$.
\end{enumerate}
\end{proposition}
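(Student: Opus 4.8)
The plan is to exploit the duality between the evaluation sequence (\ref{eqn100}) of a generated coherent system of type $(1,d,n+1)$ and the evaluation sequence of a generated coherent system of type $(n,d,n+1)$. Since ${\mathcal L}$ is locally free, dualizing (\ref{eqn100}) yields an exact sequence
$$0\longrightarrow{\mathcal L}^*\longrightarrow V^*\otimes{\mathcal O}\longrightarrow M_{V,{\mathcal L}}^*\longrightarrow0,$$
in which $M_{V,{\mathcal L}}^*$ has rank $n$ and degree $d$; conversely, the evaluation sequence $0\to N\to W\otimes{\mathcal O}\to E\to0$ of a generated coherent system $(E,W)$ of type $(n,d,n+1)$ dualizes to
$$0\longrightarrow E^*\longrightarrow W^*\otimes{\mathcal O}\longrightarrow N^*\longrightarrow0,$$
with $N^*$ a line bundle of degree $d$. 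The core observation is that $M_{V,{\mathcal L}}$ is stable if and only if its dual $M_{V,{\mathcal L}}^*$ is, so the whole proposition reduces to translating the two hypotheses across these dualizations.

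For (1) $\Rightarrow$ (2), I would set $E:=M_{V,{\mathcal L}}^*$. Because $d>0$ we have $H^0({\mathcal L}^*)=0$, so passing to global sections in the dual sequence embeds $V^*$ into $H^0(E)$; thus $(E,V^*)$ is a generated coherent system of type $(n,d,n+1)$, and it is stable by hypothesis. Proposition \ref{lem1} then gives $(E,V^*)\in U^s(n,d,n+1)$, and since $E$ is stable this places $(E,V^*)$ in $U(n,d,n+1)$, which is therefore non-empty.

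For (2) $\Rightarrow$ (1), the essential input is Remark \ref{re9}: if $U(n,d,n+1)\neq\emptyset$, then its generic element $(E,W)$ is generated with $H^0(E^*)=0$ and $E$ stable. Dualizing its evaluation sequence produces a line bundle ${\mathcal M}:=N^*$ of degree $d$ together with a surjection $W^*\otimes{\mathcal O}\to{\mathcal M}$, so ${\mathcal M}$ is generated; the vanishing $H^0(E^*)=0$ then forces $W^*\hookrightarrow H^0({\mathcal M})$, making $({\mathcal M},W^*)$ a generated coherent system of type $(1,d,n+1)$ with $M_{W^*,{\mathcal M}}=E^*$. As $E$ is stable, so is $E^*=M_{W^*,{\mathcal M}}$, which gives (1).

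The step I expect to require the most care is the passage (2) $\Rightarrow$ (1). A general point of $U(n,d,n+1)$ is only guaranteed by Theorem \ref{t1}(3) to be \emph{generically} generated, whereas the dual-span construction needs an honestly generated coherent system together with the vanishing $H^0(E^*)=0$ in order to recover $W^*$ as a subspace of $H^0({\mathcal M})$. Remark \ref{re9} is precisely what supplies both properties simultaneously for the generic element, and it is crucial to invoke it rather than an arbitrary element of $U(n,d,n+1)$; once this is in hand, the remaining verifications (ranks, degrees, and the surjectivity of the dual evaluation map) are routine.
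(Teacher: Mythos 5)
Your proposal is correct and follows essentially the same route as the paper: Proposition \ref{lem1} applied to $(M_{V,{\mathcal L}}^*,V^*)$ for (1)$\Rightarrow$(2), and for (2)$\Rightarrow$(1) the fact that the generic element of $U(n,d,n+1)$ is generated with $H^0(E^*)=0$ (the paper treats $\beta>0$ and $\beta=0$ separately where you cite Remark \ref{re9}, which covers both), followed by dualizing the evaluation sequence to identify $M_{W^*,{\mathcal M}}\cong E^*$.
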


\begin{proof} For (1)$\Rightarrow$(2), we note that $(M_{V,{\mathcal L}}^*,V^*)$ is a generated coherent system of type $(n,d,n+1)$ with $M_{V,{\mathcal L}}^*$ stable, so $(M_{V,{\mathcal L}}^*,V^*)\in U(n,d,n+1)$ by Proposition \ref{lem1}. Conversely, suppose $U(n,d,n+1)\ne\emptyset$. If $\beta(n,d,n+1)>0$, the generic element of $U(n,d,n+1)$ is a generated coherent system $(E,W)$ with $h^0(E^*)=0$ and $E$ stable. If $\beta(n,d,n+1)=0$, then all elements of $U(n,d,n+1)$ have this property. The dual of the evaluation sequence of $(E,W)$ can be written as
$$0\longrightarrow E^*\longrightarrow W^*\otimes{\mathcal O}\longrightarrow {\mathcal L}\longrightarrow0,$$  
where ${\mathcal L}$ is a line bundle of degree $d$. It follows that $M_{W^*,{\mathcal L}}\cong E^*$ and is therefore stable, proving (1).
\end{proof}

\begin{remark}\label{rmk:500}\begin{em}  By Theorem \ref{genus2} and Proposition \ref{thm100}, the conjecture fails for $g=2$, $d=2n$, but is otherwise true for $g=2$. In fact, although Butler \cite[\S1]{bu} discusses the question of whether $M_{V,{\mathcal L}}$ is stable, \cite[Conjecture 2]{bu} actually has the weaker conclusion that $(M^*_{V,{\mathcal L}},V^*)\in G_0(n,d,n+1)$. In this form the conjecture is true for $g=2$ (see Theorem \ref{genus2}).
\end{em}\end{remark}

Using Proposition \ref{thm100}, we can now begin to form a list of cases for which Conjecture \ref{conj} holds. In the list we have noted where each case was proved.
\begin{itemize}
\item $g+n-\left[\frac{g}{n+1}\right]\le d\le g+n$ (\cite{bu}, \cite[Proposition 4.1]{le});
\item $g\ge n^2-1$ (\cite{bu}, \cite[Proposition 4.6]{le});
\item $d\ge d_1$ (Proposition \ref{prop2}, \cite{te2});
\item $d\le2n$ (\cite{mer1,mer2,bgmmn});
\item $n=3,4$ (Theorems \ref{th3}, \ref{th4})
\end{itemize}
The first and fourth items in this list can be expanded further by the use of Remark \ref{bint} and Proposition \ref{prop4}. According to the analysis in section \ref{n=23}, the following cases for $n=3$ and $n=4$ depend on the use of extensions of coherent systems (possibly in conjunction with other methods):
\begin{itemize}
\item $n=3, g=5, d=9,12$;
\item $n=4, g=3, d=10$;
\item $n=4, g=4, d=10,14$;
\item $n=4, g=5, d=10,14$;
\item $n=4, g=6, d=11,12,15,16$;
\item $n=4, g=7, d=12,13,16,17,20$;
\item $n=4, g=8, d=14,18$;
\item $n=4, g=9, d=14,18,22$;
\item $n=4, g=11, d=16,20,24,28$;
\item $n=4, g=13, d=18,22,26,30$.
\end{itemize}
All of these cases, and those depending on Propositions \ref{prop2} and \ref{prop4}, are (so far as we are aware) new.

Of the methods we have used, the only ones capable of further development appear to be elementary transformations (using direct sums of higher rank vector bundles) and extensions of coherent systems (using more refined calculations). 
The methods of \cite{te2} could also yield improved results for general $X$.


\begin{thebibliography}{AAA}




\bibitem{arb} E.~Arbarello, M.~Cornalba,
P.~A.~Griffiths and J.~Harris, {\it Geometry of Algebraic curves}
Volume I, Springer-Verlag, New York 1985.

\bibitem{ball}E.~Ballico, Coherent systems with many sections on projective curves, preprint.

\bibitem{bea}A.~Beauville, Some stable vector bundles with reducible theta divisor, {\it Manuscripta Math.} {\bf 110} (2003) 343--349. 

%\bibitem{bdo} S.B. Bradlow, G. Daskalopoulos,
%O. Garcia-Prada and R. Wentworth, Stable augmented bundles over
%Riemann surfaces. {\it Vector bundles in Algebraic Geometry},
%Durham 1993, ed N.J. Hitchin, P.E. Newstead and W.M. Oxbury, LMS
%Lecture Notes Series {\bf 208}, 15-67.

\bibitem{bo} S.~B.~Bradlow and
O.~Garcia-Prada, An application of coherent systems to a
Brill-Noether problem, {\it J. Reine Angew Math.} {\bf 551} (2002)
123--143.

\bibitem{bomn} S.~B.~Bradlow, O.~Garcia-Prada,
V.~Mu\~noz and P.~E.~Newstead, Coherent systems and Brill-Noether
theory,
{\it Internat. J. Math.} {\bf 14} (2003) 683--733.

\bibitem{bgmmn2}S.~B.~Bradlow, O.~Garcia-Prada, V.~Mercat,
V.~Mu\~noz and P.~E.~Newstead, On the geometry of moduli spaces of coherent systems on algebraic curves, {\it Internat. J. Math.} {\bf 18} (2007) 411--453.

\bibitem{bgmmn} S.~B.~Bradlow, O.~Garcia-Prada, V.~Mercat,
V.~Mu\~noz and P.~E.~Newstead, Moduli spaces of coherent systems of small slope on algebraic curves, preprint, arXiv:0707.0983.


\bibitem{le} L.~Brambila-Paz, Non-emptiness of
moduli spaces of coherent systems, {\it Internat. J. Math.}, to appear.

\bibitem{la} L.~Brambila-Paz and A.~Ortega, Tensor product of coherent systems, preprint, arXiv:0711.3944


\bibitem{bgn} L.~Brambila-Paz, I.~Grzegorczyk and
P~ E~ Newstead, Geography of Brill-Noether loci for small slopes,
{\it J. Alg. Geom.} \textbf{6} (1997) 645--669.

\bibitem{bu1} D.~C.~Butler, Normal generation
of vector bundles over a curve, {\it J. Diff. Geom.}
\textbf{39} (1994) 1--34.

\bibitem{bu} D.~C.~Butler, Birational maps of
moduli of Brill-Noether pairs, preprint, arXiv:alg-geom/9705009.

\bibitem{el} L.~Ein and R.~Lazarsfeld, Stability and restrictions of Picard bundles with an application to the normal bundles of elliptic curves, {\it Complex projective geometry (Trieste 1989/Bergen 1989)},
ed. G.~Ellingsrud, C.~Peskine, G.~Sacchiero and S.~A.~Stromme, LMS Lecture Notes Series {\bf 179} 149--156, CUP, Cambridge 1992.

\bibitem{green1} M. L. Green, Koszul Cohomology
and the Geometry of Projective Varieties,
{\it J. Diff. Geom.} {\bf 19} (1984) 125-171.

\bibitem{he} M.~He, Espaces de modules de syst\`emes coh\'erents,
{\it Internat. J. Math. } {\bf 9} (1998) 545--598.

\bibitem{an} A.~King and P.~E.~Newstead, Moduli of
Brill-Noether pairs on algebraic curves {\it Internat. J. Math.}
(1995) 733--748.

\bibitem{ln1} H.~Lange and P.~E.~Newstead, Coherent systems of
genus $0$, {\it Internat. J. Math.} {\bf 15} (2004) 409--424.

\bibitem{ln2} H.~Lange and P.~E.~Newstead, Coherent systems on
elliptic curves, {\it Internat. J. Math.} {\bf 16} (2005) 787--805.

\bibitem{lep} J.~Le~Potier, Faisceaux semi-stables et syst\`emes
coh\'erents,{\it Vector bundles in Algebraic Geometry}, Durham
1993, ed N.~J.~Hitchin, P.~E.~Newstead and W.~M.~Oxbury, LMS Lecture
Notes Series {\bf 208}, 179-239.

\bibitem{mer1} V.~Mercat, Le probl\`eme de Brill-Noether pour les fibr\'es stables de petite pente, {\it J. Reine Angew. Math.} {\bf 506} (1999), 1--41.

\bibitem{mer} V.~Mercat, Le probl\`eme de Brill-Noether et la
th\'eor\`eme de Teixidor, {\it Manuscripta Math.} {\bf 98} (1999)
75--85.

\bibitem{mer2} V.~Mercat, Fibr\'es stables de pente 2,
{\it Bull. London Math. Soc.} {\bf 33} (2001) 535--542.

\bibitem{mis} E.~Mistretta, Stability of line bundles transforms on curves with respect to low codimensional subspaces, preprint, arXiv:math/0703465.

\bibitem{nr} M.~S.~Narasimhan and S.~Ramanan,
Deformations of the moduli space of vector bundles
over an algebraic curve {\it Ann. of Math.} {\bf 101} (1975) 391--417.


\bibitem{pr} K.~Paranjape and S.~Ramanan, On the canonical
ring of a curve, {\it Algebraic geometry and Commutative Algebra
in Honor of Masayoshi Nagata} (1987) 503--516.

\bibitem{rag} N.~Raghavendra and P.~A.~Vishwanath, Moduli of pairs and
generalized theta divisors, {\it T\^ohoku Math. J. }{\bf 46}
(1994) 321--340.

\bibitem{re} R.~Re, Multiplication of sections and Clifford bounds for stable vector bundles on curves, {\it Comm. Algebra} {\bf 26} (1998) 1931--1944.

\bibitem{sun} N.~Sundaram, Special divisors and vector bundles, {\it T\^ohoku Math. J.}
{\bf 39} (1987) 175--213.

\bibitem{te1}M.~Teixidor i Bigas, Brill-Noether theory for stable vector bundles, {\it Duke Math. J.}  {\bf 62} 9(1991) 385--400.

\bibitem{te2}M.~Teixidor i Bigas, Existence of coherent systems, Internat. J. Math., to appear.

%\bibitem{ye} C. Yee, On the Brill-Noether map, preprint.

\end{thebibliography}
\end{document}